\definecolor{vegasgold}{rgb}{0.77, 0.7, 0.35}
\definecolor{darkgoldenrod}{rgb}{0.72, 0.53, 0.04}
\definecolor{gold(metallic)}{rgb}{0.83, 0.69, 0.22}
\DeclareFontFamily{U}{wncy}{}
\DeclareFontShape{U}{wncy}{m}{n}{<->wncyr10}{}
\DeclareSymbolFont{mcy}{U}{wncy}{m}{n}
\DeclareMathSymbol{\Sh}{\mathord}{mcy}{"58}
\newtheorem{theorem}{Theorem}[section]
\newtheorem{lemma}[theorem]{Lemma}
\newtheorem{question}[theorem]{Question}
\newtheorem{conj}[theorem]{Conjecture}
\newtheorem{heuristic}[theorem]{Heuristic}
\newtheorem{proposition}[theorem]{Proposition}
\newtheorem{corollary}[theorem]{Corollary}
\newtheorem{definition}[theorem]{Definition}
\newtheorem{fact}[theorem]{Fact}
\numberwithin{equation}{section}
\theoremstyle{remark}
\newtheorem{remark}[theorem]{Remark}
\newtheorem{example}[theorem]{Example}
\newcommand{\tr}{\operatorname{tr}}
\newcommand{\Frob}{\operatorname{Frob}}
\newcommand{\Gal}{\operatorname{Gal}}
\newcommand{\ns}{\operatorname{ns}}
\newcommand{\Aut}{\operatorname{Aut}}
\newcommand{\cE}{\mathcal{E}}
\newcommand{\Qp}{\mathbb{Q}_p}
\newcommand{\Zp}{\mathbb{Z}_p}
\newcommand{\GL}{\mathrm{GL}}
\newcommand{\Z}{\mathbb{Z}}
\newcommand{\p}{\mathfrak{p}}
\newcommand{\Q}{\mathbb{Q}}
\newcommand{\F}{\mathbb{F}}
\newcommand{\C}{\mathbf{C}}
\newcommand{\cL}{\mathcal{L}}
\newcommand{\Hom}{\mathrm{Hom}}
\newcommand{\new}{\mathrm{new}}
\newcommand{\base}{\mathrm{base}}
\newcommand{\Sel}{\mathrm{Sel}}
\newcommand{\Gr}{\mathrm{Gr}}
\newcommand{\Reg}{\mathrm{Reg}}
\newcommand{\rank}{\mathrm{rank}}
\newcommand{\corank}{\mathrm{corank}}
\newcommand{\Kinf}{K_\infty}
\newcommand{\X}{\mathcal X}
\newcommand{\tor}{\mathrm{tors}}
\newcommand{\op}[1]{\operatorname{#1}}
\newcommand{\Selp}{\Sel_{p^{\infty}}(E/K_{\infty})}
\newcommand{\Selpell}{\Sel_{p^{\infty}}(E/K_{\infty}^{d})}
\newcommand{\mup}{\mu_p(E/K_{\infty})}
\newcommand{\lap}{\lambda_p(E/K_{\infty})}
\newcommand\mtx[4] { \left( {\begin{array}{cc}
 #1 & #2 \\
 #3 & #4 \\
 \end{array} } \right)}
 \newcommand{\widebar}[1]{\mkern 2.5mu\overline{\mkern-2.5mu#1\mkern-2.5mu}\mkern 2.5mu}
 \newcommand{\barrho}{\overline{\rho}}
 \newtheorem{hypothesis}[theorem]{Hypothesis}
 \newcommand{\Kellinf}{K_{\infty}^{d}}
\begin{document}
\title[Arithmetic statistics and anticyclotomic Iwasawa theory]{Statistics for anticyclotomic Iwasawa Invariants of Elliptic Curves}

\author[J.~Hatley]{Jeffrey Hatley}
\address[Hatley]{
Department of Mathematics\\
Union College\\
Bailey Hall 202\\
Schenectady, NY 12308\\
USA}
\email{hatleyj@union.edu}

\author[D.~Kundu]{Debanjana Kundu}
\address[Kundu]{MAGC 3.434 \\
Department of Mathematical and Statistical Sciences\\
UTRGV\\
Edinburg, TX 78541\\
USA}
\email{dkundu@math.toronto.edu}

\author[A.~Ray]{Anwesh Ray}
\address[Ray]{Chennai Mathematical Institute, H1, SIPCOT IT Park, Kelambakkam, Siruseri, Tamil Nadu 603103, India}
\email{anwesh@cmi.ac.in}

\begin{abstract}
We study the average behaviour of the Iwasawa invariants for Selmer groups of elliptic curves, considered over anticyclotomic $\Z_p$-extensions in both the definite and indefinite settings.
The results in this paper lie at the intersection of arithmetic statistics and Iwasawa theory.
\end{abstract}

\subjclass[2010]{11G05, 11R23 (primary); 11R45 (secondary).}
\keywords{Arithmetic statistics, Anticyclotomic extensions, Iwasawa theory, Selmer groups, elliptic curves.}

\maketitle

\section{Introduction}
\label{section:intro}
In \cite{CL84}, H.~Cohen and H.~W.~Lenstra introduced a plausible heuristic on the distribution of class groups of number fields.
It was claimed (and justified) that their asymptotic behaviour mimics that of generic finite abelian groups weighted inversely by the size of the automorphism group.
Over the years, these heuristics have been suitably modified (see for example \cite{CM90, CM94, Mal08, Mal10, Gar15, AM15}) and are known to match numerical results fairly well.
In \cite{FW89}, E.~Friedman and L.~C.~Washington reinterpreted the Cohen--Lenstra heuristics using random matrix theory.
These ideas have since become a powerful source of predictions for number fields, which kick-started the field of arithmetic statistics.
One route that arithmetic statistics took was in the direction of elliptic curves.
Motivated by the strong analogy between number fields and elliptic curves, C.~Delaunay modelled Tate--Shafarevich groups of elliptic curves based on the Cohen--Lenstra heuristics, see \cite{Del01}.

An important conjecture in the theory of elliptic curves is the \emph{rank distribution conjecture} which claims that over any number field, half of all elliptic curves have Mordell--Weil rank zero and the remaining half have Mordell--Weil rank one.
Finally, higher Mordell--Weil ranks constitute zero percent of all elliptic curves, even though there may exist infinitely many such elliptic curves.
Therefore, a suitably-defined \emph{average rank} would be $1/2$.
The best results in this direction are by M.~Bharagava and A.~Shankar (see \cite{BS15_quartic, BS15_cubic}).
They show that the average rank of elliptic curves over $\Q$ is strictly less than one, and that both rank zero and rank one cases comprise non-zero densities across all elliptic curves over $\Q$.
Proving results about the Mordell--Weil rank almost always involves a thorough analysis of the Selmer group.
This idea dates back to the proof of the Mordell--Weil theorem, and is an essential part of results of Bhargava--Shankar.
In \cite{BS13_4Selmer, BS13_5Selmer}, they explicitly computed the average sizes of certain Selmer groups to deduce asymptotic results of Mordell--Weil ranks and made the following conjecture.
\begin{conj}
Let $F$ be a number field and $n$ be any positive integer.
Then when all elliptic curves $E$ are ordered by height,
the average size of the $n$-Selmer group, denoted by $\Sel_n(E/F)$, is $\sigma(n)$, the sum of the divisors of $n$.
\end{conj}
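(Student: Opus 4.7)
The plan is to follow the Bhargava--Shankar paradigm that has established the conjecture in the cases $n \in \{2,3,4,5\}$ over $\Q$. The overall strategy has four ingredients: (i) exhibit a representation $V$ of a reductive group $G$ defined over $\cO_F$ whose nondegenerate integral orbits parametrize $n$-Selmer elements of elliptic curves whose invariants encode the Weierstrass data of $E$; (ii) count these orbits in height-ordered regions using geometry-of-numbers; (iii) sieve to those orbits corresponding to everywhere locally soluble classes, i.e.\ genuine $\Sel_n(E/F)$ elements; and (iv) compute local densities place-by-place, with Tate's local duality ensuring that the Euler product yields $\sigma(n)$.

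Step (i) is the heart of the matter. For $n=2$ the parametrization is by binary quartic forms, for $n=3$ by ternary cubics, for $n=4$ by pairs of quaternary quadrics (degree-$4$ genus-one models), and for $n=5$ by quintuples of $5\times 5$ alternating matrices (degree-$5$ genus-one models); in each case the generic stabilizer, in a suitable sense, recovers $E[n]$. Extending to $F \neq \Q$ introduces integral-structure subtleties over $\cO_F$ but is not fundamentally new. The essential obstacle is that for $n \geq 6$ no analogous ``coregular'' representation with an $E[n]$-parametrization is known; ideas from Vinberg theory and nilpotent orbits in graded Lie algebras give candidates only for small $n$ attached to exceptional groups.

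Assuming (i), step (ii) would proceed by averaging a suitable cutoff function over a fundamental domain for $G(\cO_F)\backslash G(F\otimes_\Q \mathbb{R})$ acting on the appropriate real locus of $V$, following the ``thickening the cusp'' device of \cite{BS15_quartic}; the main complication is that in higher degree the cuspidal region contains reducible orbits that must be shown not to contribute to the main term. Step (iii) requires a uniform Ekedahl-type tail estimate: the number of height-$\le X$ orbits whose invariants are divisible by $q^2$ is $O(X/q^{2-\epsilon})$ uniformly in $q$, which is technically intricate but well understood in principle. Step (iv) is cleanest: at each place $v$ of $F$, Tate's local duality and the local Euler characteristic give the precise local factor, and a direct computation of the $v$-adic volume of locally soluble orbits must be shown to produce exactly the factor that assembles to $\sigma(n) = \prod_{p \mid n} \frac{p^{v_p(n)+1}-1}{p-1}$ after taking the product over all places.

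The hardest step by far is (i). For $n \leq 5$ the required invariant theory was classically available or discovered by Bhargava through explicit coincidences; for $n \geq 6$ no such parametrization is known and many experts suspect none exists in the same shape. Consequently, a complete proof likely requires a new idea, either a cohomological input that bypasses explicit orbit representatives, or an ergodic/equidistribution argument on an adelic homogeneous space, so that one never needs to realise Selmer classes as orbits of a prehomogeneous representation at all.
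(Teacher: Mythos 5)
This statement is a conjecture, not a theorem: the paper attributes it to Bhargava--Shankar, offers no proof, and explicitly notes only that it ``has been verified for $n=2,3,4,5$.'' So there is no proof in the paper against which to compare yours, and what you have written is not a proof either --- it is an accurate survey of the strategy that establishes the known cases, together with your own (correct) admission that the strategy cannot currently be carried out in general.

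The genuine gap is exactly where you locate it, in step (i): for $n\geq 6$ no coregular representation is known whose integral orbits parametrize $n$-Selmer elements, and without such a parametrization steps (ii)--(iv) have nothing to count. Beyond that, even for $n\leq 5$ your sketch elides two points that are themselves serious open problems over a general number field $F$: the geometry-of-numbers argument over $\cO_F$ (reduction theory for $G(\cO_F)$ acting on $V(F\otimes_{\Q}\mathbb{R})$, which has many archimedean components when $F\neq\Q$) and the uniformity/tail estimates needed for the sieve in step (iii), which are known over $\Q$ but not in full generality elsewhere. You should therefore present this as a description of the state of the art and an identification of the obstructions, not as a proof; as written, no step of the argument is actually established, and the conjecture remains open for every $n\geq 6$ and, over general $F$, even for small $n$.
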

This conjecture has been verified for $n= 2,3,4,5$ and was enough
to deduce powerful partial results for the rank distribution conjecture.

Another subject of active research is Iwasawa theory.
Introduced by K.~Iwasawa in the 1950's, it started as the study of class groups over infinite towers of number fields (see \cite{Iwa59_GammaExtensions, Iwa59_cyclotomic}).
The close relationship between the group of units of number fields and the rational points of elliptic curves motivated B.~Mazur to study the Iwasawa theory of Selmer groups of elliptic curves in \cite{Maz72}.

In \cite{KR21}, the second and the third named authors initiated the study of average behaviour of the Iwasawa invariants for the $p$-primary Selmer groups of elliptic curves over the cyclotomic $\Z_p$-extension of $\Q$.
In this paper, we extend the ideas to study the average behaviour of the $p$-primary Selmer group of an elliptic curve with good \emph{ordinary} reduction at $p$ over the \emph{anticyclotomic} $\Z_p$-extension of an imaginary quadratic field $K/\Q$.
These two settings are substantially different from each other.
Over the cyclotomic $\Z_p$-extension of $\Q$, Mazur conjectured that for an elliptic curve with good ordinary reduction at an odd prime $p$, the $p$-primary Selmer group is cotorsion as a module over the Iwasawa algebra, denoted by $\Lambda$.
This conjecture was settled by K.~Kato, see \cite[Theorem 17.4]{Kat04}.
Unlike in the cyclotomic case, M.~Bertolini showed (see \cite{Ber95}) that over an anticyclotomic $\Z_p$-extension, the Selmer group need not always be $\Lambda$-cotorsion.
Several authors have studied classical Selmer groups of elliptic curves (more generally, modular forms) in the anticyclotomic setting (see for example \cite{Vat03, BD05, PW11}).

The Iwasawa algebra is isomorphic to the power series ring $\Z_p\llbracket T\rrbracket$.
The algebraic structure of the Selmer group (as a $\Lambda$-module) is encoded by Iwasawa invariants, $\mu$ and $\lambda$.
By the $p$-adic Weierstrass Preparation Theorem, the characteristic ideal of the Pontryagin dual of the Selmer group is generated by a unique element $f_E^{(p)}(T)$, which can be expressed as a power of $p$ times a distinguished polynomial.
The $\mu$-invariant is the power of $p$ dividing $f_E^{(p)}(T)$ and the $\lambda$-invariant is its degree.
In \cite[Theorem 5.11]{PW11}, R.~Pollack--T.~Weston show that when condition (CR) holds the Selmer group \emph{is} $\Lambda$-cotorsion and the algebraic $\mu$-invariant is 0 (see \cite{kim2017freeness} for a corrected version of condition (CR)).
In a slightly more general setting, vanishing of the $\mu$-invariant can be deduced from the arguments given in the last two pages of \cite{Castella}.
Further, if the $p$-primary part of the Tate--Shafarevich group, denoted by $\Sh(E/K)[p^\infty]$, is finite, then the $\lambda$-invariant is at least as large as the Mordell--Weil rank of $E$ over $K$ (see Lemma \ref{lemma21}).
However, the $\lambda$-invariant may indeed be strictly larger than the rank, and one of our main objectives is to determine its behaviour on average.
We analyze the following three separate but interrelated questions.
\begin{enumerate}
\item\label{question1} For a fixed elliptic curve $E$ over a fixed imaginary quadratic field $K$, how do the Iwasawa invariants vary as $p$ varies over all odd primes $p$ at which $E$ has good ordinary reduction?
\item\label{question2} For a fixed elliptic curve $E_{/\Q}$ and a fixed odd prime $p$ of good ordinary reduction, how do the Iwasawa invariants vary as $K = \Q(\sqrt{-d})$ varies over all primes $d>0$?
\item\label{question3} For a fixed prime $p$ and a fixed imaginary quadratic field $K$, how do the Iwasawa invariants vary as $E$ varies over all elliptic curves defined over $\Q$, ordered by height (with good ordinary reduction at $p$)?
\end{enumerate}

Let $N$ denote the conductor of $E$.
We write $N=N^+ N^-$, where $N^+$ (resp. $N^-$) is divisible only by primes which are split (resp. inert) in $K$.
When $N^-$ is divisible by an odd number of primes, we are said to be working in the \textit{definite case}, and when $N^+$ is divisible by an even number of primes, it is called the \textit{indefinite case}.
There is a sharp divide in anticyclotomic Iwasawa theory between these two cases due to the presence of Heegner points in the latter setting.

With regards to the first question, we first work in the \emph{definite case} and show that for non-CM elliptic curves, the exact order of growth for the number of primes at which $\mu=0$ is closely related to the Lang--Trotter Conjecture.
A more precise answer to Question 1 is provided in Theorem \ref{th46} and Corollary \ref{cor: to main thm of section 4}.
The second question, is subtle and we can only provide a partial result in this direction (see Theorem \ref{thm: partial result for varying K}).
It is shown that this question is largely dependent on how often the order of the Tate--Shafarevich group over $K$ is not divisible by $p$.
We address Question 3 in Theorem \ref{main result varying elliptic curve} for elliptic curves of Mordell--Weil rank $0$, and the answer is largely dependent on the variation of Tate--Shafarevich groups over all elliptic curves (of rank 0).
In this paper, we not only extend, but also refine the approach taken in \cite{KR21}, and relate the results to Cohen--Lenstra heuristics.

In Section \ref{sec:indefinite}, we study the aforementioned three questions in the \emph{indefinite case}, i.e., when the \emph{Heegner hypothesis} is satisfied and the Selmer group of interest in \emph{not} $\Lambda$-cotorsion.
Note that there is no known formula for the Euler characteristic of the $p$-primary Selmer group in the indefinite setting.
The approach used here is to relate the Selmer group, which is not cotorsion to one which is.
The characteristic element of this modified Selmer group is related to the Bertolini--Darmon--Prasanna $p$-adic $L$-function (see \cite{BDP}). We use this relationship to prove our results in this indefinite case.
It appears that answering questions \eqref{question1}-\eqref{question3} systematically in the indefinite setting is currently out of reach.
We provide some partial answers in Section \ref{stats for indefinite} and supplement our results with computational data.

\emph{Organization:} Including this introduction, this article has 11 sections.
Section \ref{section: background} is preliminary in nature.
We give definitions of the objects of interest and record some basic results from the literature.
In Section \ref{section: EC}, we discuss in detail the Euler characteristic associated to the Selmer group of an elliptic curve.
We prove several important lemmas which are used throughout this article.

In Sections \ref{section: fixed E varying p} through \ref{section: CL heuristics for Sha} we focus on the definite setting, where $K$ fails to satisfy the Heegner hypothesis.
In Section \ref{section: fixed E varying p}, we fix an imaginary quadratic field $K/\Q$ and consider the anticyclotomic $\Z_p$-extension $K_\infty/K$.
Using the Euler characteristic formula, we study the variation of the Iwasawa invariants of a fixed rank 0 elliptic curve (defined over $\Q$) without complex multiplication as $p$ varies.
In Section \ref{section: Ridgdill}, we give sufficient conditions for elliptic curves to have finitely many anomalous primes both over $\Q$ and over $K$.
In Section \ref{section: vary K vanish mu}, for a given pair $(E,p)$ we study for what proportion of imaginary quadratic fields the dual Selmer group is a finitely generated $\Z_p$-module over the anticyclotomic $\Z_p$-extension.
Further, we supplement our results with concrete examples.
In Section \ref{section: vary K rank 0}, we move one step ahead and study for what proportion of imaginary quadratic fields the dual Selmer group is trivial.
In Section \ref{section: vary elliptic curve}, we extend the results proved in \cite{KR21}.
We study the variation of Iwasawa invariants of the Selmer groups $\Sel_{p^{\infty}}(E/\Q_{\infty})$ and $\Sel_{p^{\infty}}(E/K_{\infty})$ as $E$ ranges over all elliptic curves (defined over $\Q$) of rank zero (over $K$) with good ordinary reduction at the prime(s) above $p$.
In Section \ref{section: CL heuristics for Sha}, we survey the heuristics for the Tate--Shafarevich groups for elliptic curves ordered by height and explain their consequences to our results.

In Section \ref{sec:indefinite}, we turn to the indefinite setting, where the Iwasawa theory is significantly different due to the presence of Heegner points.
In particular, the Selmer groups of interest are no longer $\Lambda$-cotorsion, so many of the strategies used in the preceding sections fail.
Finally, computational evidence for our results in the definite and indefinite settings are presented in tables in Section \ref{section: tables}.

\subsection*{Acknowledgements}
DK acknowledges the support of the PIMS Postdoctoral Fellowship.
We thank Ashay Burungale, Francesc Castella, Henri Darmon, Chan-Ho Kim, Barry Mazur, Ravi Ramakrishna, Lawrence Washington, Tom Weston, and Stanley Xiao for helpful discussions.
We thank Lawrence Washington for pointing out a mistake made in the calculations of Table 2 in an earlier version of the paper.
We thank the referee for their comments which helped improve the exposition.

\section{Background and Preliminaries}
\label{section: background}

\subsection{} Throughout, let $p$ be an odd prime and $E$ an elliptic curve defined over an imaginary quadratic field $K=\Q(\sqrt{-d})$.
Denote by $\Kinf$ the anticyclotomic $\Z_p$-extension of $K$.
This is the unique $\Z_p$-extension of $K$ which is Galois over $\Q$, for which the Galois group $\Gal(\Kinf/\Q)$ is pro-dihedral.
Set $\Gamma$ to be the Galois group $\Gal(\Kinf/K)$ and pick a topological generator $\gamma\in \Gamma$.
The Iwasawa algebra $\Lambda$ is the completed group algebra $\Z_p\llbracket \Gamma \rrbracket :=\varprojlim_n \Z_p[\Gamma/\Gamma^{p^n}]$.
Fix the isomorphism of rings $\Lambda\simeq\Z_p\llbracket T\rrbracket $, that sends $\gamma -1$ to the formal variable $T$.

Assume throughout that $E$ has good ordinary reduction at the primes $v|p$ of $K$.
We recall the definition of the $p$-primary Selmer group of $E$ over $\Kinf$, which will be the main object of study in the paper.
Let $S_p$ denote the primes $v$ of $K$ such that $v|p$.
Choose a finite set of primes $S$ containing $S_p$ and the primes at which $E$ has bad reduction.
For any finite extension $L/K$, write $S(L)$ for the set of primes $v$ of $K$ such that $v$ lies above a prime in $S$.
Write
\[
J_v(E/L) := \bigoplus_{w|v} H^1\left( L_w, E\right)[p^\infty]
\]
where the direct sum is over all primes $w$ of $L$ lying above $v$.
Then the \emph{$p$-primary Selmer group over $L$} is defined as follows
\[
\Sel_{p^\infty}(E/L):=\ker\left\{ H^1\left(K_S/L,E[p^{\infty}]\right)\longrightarrow \bigoplus_{v\in S} J_v(E/L)\right\}.
\]
Next, set $J_v(E/\Kinf)$ to be the direct limit \[J_v(E/\Kinf):=\varinjlim_L J_v(E/L),\]where $L$ ranges over all number fields contained in $\Kinf$.
Taking direct limits, the \emph{$p$-primary Selmer group over $\Kinf$} is defined as follows
\[
\Selp:=\ker\left\{ H^1\left(K_S/\Kinf,E[p^{\infty}]\right)\longrightarrow \bigoplus_{v\in S} J_v(E/\Kinf)\right\}.
\]
Note that $J_v(E/\Kinf)$ is a $\Lambda$-module and the map above is a map of $\Lambda$-modules.
A $\Lambda$-module $M$ is said to be cofinitely generated (resp. cotorsion) if its Pontryagin dual $M^{\vee}:=\Hom_{\Z_p}\left(M, \Q_p/\Z_p\right)$ is finitely generated (resp. torsion) as a $\Lambda$-module.
A standard application of Nakayama's lemma shows that $\Selp$ is cofinitely generated; however, the Selmer group need \emph{not} always be cotorsion.
The property of being $\Lambda$-cotorsion holds if some additional hypotheses are satisfied, which are discussed in \S\ref{section: fixed E varying p}.

\subsection{}
Let $M$ be a cofinitely generated $\Lambda$-module.
We introduce the Iwasawa invariants associated to $M$.
By the \emph{Structure Theorem for $\Lambda$-modules} (see \cite[Theorem 13.12]{washington1997}), $M^{\vee}$ is pseudo-isomorphic to a finite direct sum of cyclic $\Lambda$-modules, i.e., there is a map of $\Lambda$-modules
\[
M^{\vee}\longrightarrow \Lambda^r\oplus \left(\bigoplus_{i=1}^s \Lambda/(p^{\mu_i})\right)\oplus \left(\bigoplus_{j=1}^t \Lambda/(f_j(T)) \right)
\]
with finite kernel and cokernel.
Here, $r$ is the rank of $M$, $\mu_i>0$ and $f_j(T)$ is a distinguished polynomial (i.e., a monic polynomial with non-leading coefficients divisible by $p$).
The characteristic ideal of $M^\vee$ is (up to a unit) generated by
\[
f_{M}^{(p)}(T) = f_{M}(T) := p^{\sum_{i} \mu_i} \prod_j f_j(T).
\]
The $\mu$-invariant of $M$ is defined as the power of $p$ in $f_{M}(T)$.
More precisely,
\[
\mu_p(M):=\begin{cases}
\sum_{i=1}^s \mu_i & \textrm{ if } s>0\\
0 & \textrm{ if } s=0.
\end{cases}
\]
The $\lambda$-invariant of $M$ is the degree of the characteristic element, i.e.
\[
\lambda_p(M) :=\begin{cases}
\sum_{j=1}^t \deg f_j & \textrm{ if } t>0\\
0 & \textrm{ if } t=0.
\end{cases}
\]
Denote by $\mup$ (resp. $\lap$) the $\mu$-invariant (resp. $\lambda$-invariant) of the Selmer group $\Selp$.

\begin{lemma}\label{lemma21}
Let $E_{/K}$ be an elliptic curve and assume that the Selmer group $\Selp$ is cotorsion as a $\Lambda$-module.
Then $\lap\geq \rank_{\Z} E(K)$.
\end{lemma}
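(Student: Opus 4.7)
The plan is to combine three ingredients: the Kummer injection of the Mordell--Weil group into the classical Selmer group, an anticyclotomic control theorem relating the ground-level Selmer group to the $\Gamma$-invariants of $\Selp$, and the structure theorem for torsion $\Lambda$-modules to convert a $\Z_p$-corank bound into a bound on $\lap$. The argument is essentially identical to the cyclotomic analogue.

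First I would use the Kummer descent sequence to produce an injection
\[
E(K) \otimes_\Z \Q_p/\Z_p \hookrightarrow \Sel_{p^\infty}(E/K),
\]
so $\Sel_{p^\infty}(E/K)$ has $\Z_p$-corank at least $\rank_\Z E(K)$. Next, apply the anticyclotomic Mazur-type control theorem (which holds because $E$ has good ordinary reduction at every prime $v \mid p$ of $K$): this furnishes a natural map $\Sel_{p^\infty}(E/K) \to \Selp^\Gamma$ with finite kernel and cokernel. Consequently the $\Z_p$-corank of $\Selp^\Gamma$ is also at least $\rank_\Z E(K)$.

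It then remains to bound the $\Z_p$-corank of $\Selp^\Gamma$ above by $\lap$. Writing $M := (\Selp)^\vee$, Pontryagin duality gives $(M/TM)^\vee \cong \Selp^\Gamma$, so the $\Z_p$-corank of $\Selp^\Gamma$ equals $\rank_{\Z_p} M_\Gamma$. The structure theorem, together with our cotorsion hypothesis, presents $M$ as pseudo-isomorphic to
\[
\bigoplus_{i=1}^{s} \Lambda/(p^{\mu_i}) \ \oplus\ \bigoplus_{j=1}^{t} \Lambda/(f_j(T)),
\]
and reducing modulo $T$ kills each $\Lambda/(p^{\mu_i})$ up to a finite group while contributing $\Z_p$-rank $0$ or $1$ from each $\Lambda/(f_j)$ according as $f_j(0) \ne 0$ or $T \mid f_j$. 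Since pseudo-isomorphisms preserve $\Z_p$-ranks of coinvariants, we get $\rank_{\Z_p} M_\Gamma = \#\{j : T \mid f_j(T)\} \leq \sum_j \deg f_j = \lap$. Concatenating the inequalities yields $\lap \geq \rank_\Z E(K)$.

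The only potential subtlety is ensuring the anticyclotomic control theorem applies in the stated generality; but since $\Lambda$-cotorsion of $\Selp$ is assumed as a hypothesis (rather than being a conclusion we would have to extract from control-theoretic inputs), the required kernel/cokernel-finiteness is standard for good ordinary $E$ at primes above $p$, and no further input is needed.
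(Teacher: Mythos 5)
Your proposal is correct and follows essentially the same route as the paper: inject $E(K)\otimes\Q_p/\Z_p$ into $\Sel_{p^\infty}(E/K)$, pass to $\Selp^{\Gamma}$ via the map with finite kernel, and then bound $\corank_{\Z_p}\Selp^{\Gamma}$ by $\lap$ using the structure theorem. The only difference is cosmetic: you spell out the structure-theory step (counting the $f_j$ divisible by $T$) that the paper leaves as a one-line citation, and you invoke finiteness of the cokernel of the control map, which is not actually needed for this inequality.
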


\begin{proof}
Set $r$ to denote the $\Z_p$-corank of $\Selp^{\Gamma}$.
Since $\Selp$ is cotorsion, $r$ is finite.
Consider the short exact sequence,
\[0\rightarrow E(K)\otimes \Q_p/\Z_p\rightarrow \Sel_{p^{\infty}}(E/K)\rightarrow \Sh(E/K)[p^{\infty}]\rightarrow 0.\]
We deduce that
\begin{equation}
\label{eq21}
 \corank_{\Z_p} \Sel_{p^{\infty}}(E/K)\geq \rank_{\Z} E(K),
\end{equation}
with equality if $\Sh(E/K)[p^{\infty}]$ is finite.
We know that $\lap\geq r$ (see also for example, \cite[Theorem~1.9]{Gre99} which can be adapted in this setting).
It suffices to show that $r\geq \rank_{\Z} E(K)$.
This is indeed the case, since there is a natural map
\[
\Sel_{p^{\infty}}(E/K)\rightarrow \Selp^\Gamma
\]with finite kernel.
From \eqref{eq21}, we see that $r\geq \rank_{\Z} E(K)$ and the result follows.
\end{proof}

\section{The Euler characteristic}
\label{section: EC}

In this section, we introduce the Euler characteristic associated to $\Selp$ and its relationship with the Iwasawa invariants $\mu$ and $\lambda$.

\subsection{} In what follows, $M$ will be a cofinitely generated and cotorsion $\Lambda$-module.
Since $\Gamma$ has $p$-cohomological dimension equal to $1$, the cohomology groups $H^i(\Gamma, M)$ are trivial for $i\geq 2$.
Recall that $H^1(\Gamma, M)$ is identified with the module of coinvariants $M_{\Gamma}$, see \cite[Proposition 1.7.7]{NSW08}.
Since $M$ is cofinitely generated as a $\Lambda$-module, both $M^{\Gamma}$ and $M_{\Gamma}$ are cofinitely generated as $\Z_p$-modules.
In fact, their coranks are equal, as the following result shows.

\begin{lemma}
\label{balancedrank}
Let $M$ be a cofinitely generated cotorsion $\Lambda$-module.
Then
\[
\corank_{\Z_p} H^0(\Gamma, M)=\corank_{\Z_p} H^1(\Gamma, M).
\]
\end{lemma}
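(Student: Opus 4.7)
The plan is to dualize the problem, convert it into a statement about a finitely generated torsion $\Lambda$-module, and then apply additivity of $\Z_p$-rank on an exact sequence.

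First, I would set $N := M^{\vee}$, which by the cofinitely generated cotorsion hypothesis on $M$ is a finitely generated torsion $\Lambda$-module. Since Pontryagin duality is exact, dualizing the defining sequences of $M^{\Gamma}$ and $M_{\Gamma}$ gives canonical identifications $(M^{\Gamma})^{\vee} \cong N_{\Gamma} = N/TN$ and $(M_{\Gamma})^{\vee} \cong N^{\Gamma} = N[T]$. Consequently it suffices to show the equality
\[
\rank_{\Z_p} N^{\Gamma} = \rank_{\Z_p} N_{\Gamma}.
\]

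Next, I would verify that $N$ itself has finite $\Z_p$-rank. By the structure theorem (applied exactly as in the excerpt, with the free part $\Lambda^r$ absent because $M$ is cotorsion), $N$ is pseudo-isomorphic to
\[
\bigoplus_i \Lambda/(p^{\mu_i}) \ \oplus\ \bigoplus_j \Lambda/(f_j(T)).
\]
Each summand of the first type is annihilated by a power of $p$, so has $\Z_p$-rank $0$, while each $\Lambda/(f_j(T))$ is free of rank $\deg f_j$ over $\Z_p$ by Weierstrass preparation. Since pseudo-isomorphisms have finite kernel and cokernel, they preserve $\Z_p$-rank, so $\rank_{\Z_p} N = \sum_j \deg f_j$ is finite.

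Finally, I would apply additivity of $\Z_p$-rank (valid because $\Q_p$ is flat over $\Z_p$ and all ranks in sight are finite by the previous step) to the tautological four-term exact sequence
\[
0 \longrightarrow N^{\Gamma} \longrightarrow N \xrightarrow{\ T\ } N \longrightarrow N_{\Gamma} \longrightarrow 0,
\]
which immediately yields $\rank_{\Z_p} N^{\Gamma} = \rank_{\Z_p} N_{\Gamma}$. Combined with the duality identifications from the first paragraph, this gives the desired equality $\corank_{\Z_p} H^0(\Gamma, M) = \corank_{\Z_p} H^1(\Gamma, M)$. There is no real obstacle in the argument; the only subtlety is checking that the $\Z_p$-rank of $N$ is finite, which is precisely the point at which the cotorsion hypothesis is used (otherwise a free $\Lambda$-summand would contribute infinite $\Z_p$-rank and the additivity step would be vacuous).
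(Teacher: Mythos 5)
Your proof is correct. The paper itself does not write out an argument here --- it simply cites \cite[Lemma 2.1]{KR21} --- and your proof (dualize to the finitely generated torsion module $N=M^{\vee}$, use the structure theorem to see $\rank_{\Z_p}N<\infty$, then apply additivity of $\Z_p$-rank to the four-term sequence $0\to N^{\Gamma}\to N\xrightarrow{T}N\to N_{\Gamma}\to 0$) is exactly the standard argument underlying that reference, with the cotorsion hypothesis correctly identified as the point where finiteness of $\rank_{\Z_p}N$ is needed.
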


\begin{proof}
See \cite[Lemma 2.1]{KR21}.
\end{proof}

As a result, $H^1(\Gamma, M)$ is finite if and only if $H^0(\Gamma, M)$ is finite.

\begin{definition}
Assume that the cohomology groups $H^i(\Gamma, M)$ are finite.
The \emph{Euler characteristic} of $M$ is defined as follows
\[\chi(\Gamma, M):=\frac{\# H^0(\Gamma, M)}{\# H^1(\Gamma, M)}.\]
\end{definition}
Denote by $f_{M}(T)$ be the characteristic element of the finitely generated torsion module $M^{\vee}$.
This is the unique generator of the characteristic ideal of $M^{\vee}$ which is a product of a power of $p$ and a \emph{distinguished polynomial}, i.e., $f_{M}(T)=p^{\mu}\times g_{M}(T)$, where $\mu\geq 0$ and $g_{M}(T)$ is a monic polynomial whose non-leading coefficients are divisible by $p$.
Note that $\lambda=\lambda(M)$ is the degree of $f_{M}(T)$.
Write
\[
f_{M}(T)=c_0 +c_1 T+\dots+c_{\lambda} T^{\lambda}
\]
and let $r_{M}\geq 0$ be the order of vanishing of $f_{M}(T)$ at $T=0$, i.e., the smallest index $j$ such that $c_j\neq 0$.
The order of vanishing $r_{M}$ is related to the Mordell--Weil rank of the elliptic curve $E$, as the following lemma shows.

\begin{lemma}\label{lemma23}
Let $E$ be an elliptic curve defined over an imaginary quadratic field $K$.
Assume that the following conditions are satisfied
\begin{enumerate}
 \item $M:=\Selp$ is $\Lambda$-cotorsion,
 \item $E(K)[p]=0$,
 \item $\Sh(E/K)[p^{\infty}]$ is finite.
\end{enumerate}
Then the order of vanishing $r_{M}$ is equal to $\rank_{\Z} E(K)$.
Further, the cohomology groups $H^i(\Gamma, M)$ are finite if and only if $E(K)$ is finite.
\end{lemma}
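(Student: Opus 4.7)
The plan is to first establish $\corank_{\Z_p} M^\Gamma = \rank_\Z E(K)$ via Mazur's control theorem combined with finiteness of $\Sh(E/K)[p^\infty]$, then promote this to the equality $r_M = \rank_\Z E(K)$ through a semisimplicity argument on the characteristic element, and finally read off the cohomological finiteness statement.

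For the first step, I would apply the control theorem for the anticyclotomic $\Z_p$-extension. The standing ordinary reduction assumption, together with hypothesis (2) that $E(K)[p] = 0$, ensures that the restriction map
\[
\Sel_{p^\infty}(E/K) \longrightarrow M^\Gamma
\]
has finite kernel and cokernel. Combining this with the short exact sequence exhibited in the proof of Lemma \ref{lemma21} and hypothesis (3), one obtains
\[
\corank_{\Z_p} M^\Gamma \;=\; \corank_{\Z_p} \Sel_{p^\infty}(E/K) \;=\; \rank_\Z E(K).
\]

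To convert this into a statement about $r_M$, I would apply the structure theorem, which presents $M^\vee$ as pseudo-isomorphic to $\bigoplus_i \Lambda/(p^{\mu_i}) \oplus \bigoplus_j \Lambda/(f_j)$. Pontryagin-dualising the four-term exact sequence $0 \to M^\Gamma \to M \xrightarrow{T} M \to M_\Gamma \to 0$ identifies $(M^\Gamma)^\vee$ with $M^\vee/TM^\vee$, whose $\Z_p$-rank, computed from the pseudo-decomposition, is the number of $j$ with $T \mid f_j$; meanwhile $r_M = \sum_j \ord_T(f_j)$. This gives the automatic inequality $r_M \geq \corank_{\Z_p} M^\Gamma = \rank_\Z E(K)$. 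The reverse inequality demands that each $f_j$ be divisible by $T$ at most to first power, i.e. that $T$ acts semisimply on the localisation $M^\vee \otimes_\Lambda \Lambda_{(T)}$; I expect this to be the main obstacle. A natural approach is to apply the control theorem at every layer $K_n$ of the tower and compare $\rank_\Z E(K_n)$ (bounded in the definite anticyclotomic setting) against $\corank_{\Z_p} M^{\Gamma_n}$ read off from the structure theorem, using that $\omega_n := (1+T)^{p^n}-1$ has only a simple zero at $T = 0$. A hidden factor $T^n$ with $n \geq 2$ in some $f_j$ would be invisible to every $M^{\Gamma_n}$ and would therefore force a discrepancy with the Mordell--Weil tower, eventually violating the boundedness.

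The second assertion then follows at once. By Lemma \ref{balancedrank}, $H^0(\Gamma, M) = M^\Gamma$ is finite if and only if $H^1(\Gamma, M) = M_\Gamma$ is finite. Moreover $M^\Gamma$ is finite precisely when its $\Z_p$-corank vanishes, and by the first part this is equivalent to $\rank_\Z E(K) = 0$, which by the Mordell--Weil theorem is equivalent to $E(K)$ being finite.
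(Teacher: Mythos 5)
Your opening and closing steps coincide with the paper's: the identity $\corank_{\Z_p} M^{\Gamma}=\rank_{\Z}E(K)$ is obtained there exactly as you describe, from the control theorem together with the finiteness of $\Sh(E/K)[p^{\infty}]$ and the short exact sequence for $\Sel_{p^\infty}(E/K)$, and the final finiteness equivalence is read off from Lemma \ref{balancedrank} in the same way. The divergence is in the middle step, and there your argument has a genuine gap. You correctly isolate the reverse inequality $r_M\leq\corank_{\Z_p}M^{\Gamma}$ as the crux --- the structure theorem gives only $r_M\geq\corank_{\Z_p}M^{\Gamma}$, with equality precisely when no $f_j$ is divisible by $T^2$ --- but the tower comparison you propose cannot close it. The point is that $\omega_n=(1+T)^{p^n}-1$ vanishes only to first order at $T=0$, so higher-order vanishing of the $f_j$ is invisible at \emph{every} finite layer, not just at the bottom one: for $X=\Lambda/(T^2)$ one has $(T^2,\omega_n)=(T^2,p^nT)$, whence
\[
X/\omega_nX\;\cong\;\Z_p\oplus\Z/p^n\Z,
\]
which has $\Z_p$-rank $1$ for all $n$, exactly as for $\Lambda/(T)$. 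The coranks $\corank_{\Z_p}M^{\Gamma_n}$, and hence the Selmer and Mordell--Weil data over the whole tower, are therefore identical for the two modules; no discrepancy or violation of boundedness ever appears, and the ``hidden factor'' you hope to detect never is.

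The paper does not attempt to prove this step from scratch either: it imports the equality $r_M=\corank_{\Z_p}M^{\Gamma}$ directly from \cite[Lemma 2.11]{zerbes09} and then runs the same corank computation you give. Note that in the only situation where the lemma is subsequently applied, namely $\rank_{\Z}E(K)=0$, the delicate direction is automatic: $\corank_{\Z_p}M^{\Gamma}=0$ means $M^{\vee}/TM^{\vee}$ is finite, hence $f_M(0)\neq 0$ and $r_M=0$. If you want a self-contained argument covering positive rank, you need an actual semisimplicity input at $T=0$ (of the kind supplied by non-degeneracy of a $p$-adic height pairing); the tower of coranks will not provide it.
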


\begin{proof}
It follows from \cite[Lemma 2.11]{zerbes09} that
\begin{equation}\label{e1}r_{M}= \corank_{\Z_p} M^{\Gamma}.\end{equation}
Since we have assumed that $E(K)[p]=0$, the natural map
\[
\iota:\Sel_{p^{\infty}}(E/K)\rightarrow \Selp^{\Gamma}
\]
is injective. It follows from the Control Theorem (see \cite[Theorem 1]{Gre03}) that $\iota$ has finite cokernel.
Therefore, $r_{M}$ is equal to the corank of $\Sel_{p^\infty}(E/K)$.
From the short exact sequence
\[
0\rightarrow E(K)\otimes \Q_p/\Z_p\rightarrow \Sel_{p^\infty}(E/K)\rightarrow \Sh(E/K)[p^{\infty}]\rightarrow 0
\]
and our assumption on the finiteness of the Tate--Shafarevich group, we deduce that $r_{M}=\rank_{\Z} E(K)$.
In particular, from \eqref{e1} it follows that $M^{\Gamma}$ is finite if and only if $E(K)$ is finite.
By Lemma \ref{balancedrank}, $H^1(\Gamma, M)$ is finite precisely when $M^{\Gamma}$ is finite.
This completes the proof of the lemma.
\end{proof}

Let $E_{/K}$ be a rank 0 elliptic curve satisfying the conditions of Lemma \ref{lemma23}.
Then the Euler characteristic of $M=\Selp$ can be defined.
In this case, set $\chi(\Gamma, E[p^{\infty}]):=\chi(\Gamma, M)$.
Since $r_{M}= 0$, it follows that the constant term $c_0$ of $f_{M}(T)$ is non-zero.
As the following result shows, $c_0$ is closely related to $\chi(\Gamma, E[p^{\infty}])$.
Let $a, b\in \Q_p$, we say that $a\sim b$ if $a=ub$ for a unit $u\in \Z_p^{\times}$.

\begin{lemma}\label{lemma24}
Let $M$ be a cofinitely generated and cotorsion $\Lambda$-module for which the cohomology groups $H^i(\Gamma, M)$ are finite.
Then
\[
\chi(\Gamma, M)\sim c_0,
\]
where $c_0$ is the constant term of the characteristic element $f_{M}(T)$.
\end{lemma}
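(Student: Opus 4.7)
The plan is to translate $\chi(\Gamma, M)$ via Pontryagin duality into a ratio of sizes on $N := M^\vee$, and then reduce to elementary modules via the structure theorem.

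Under our identification $T = \gamma - 1$, Pontryagin duality converts the kernel and cokernel of multiplication by $\gamma - 1$ on $M$ into the cokernel and kernel, respectively, of multiplication by $T$ on $N$. Combined with the identification $H^1(\Gamma, M) \cong M_\Gamma$ recalled before Lemma \ref{balancedrank}, this yields
\[
\chi(\Gamma, M) \;=\; \frac{\#(N/TN)}{\#\,N[T]} \;=:\; \eta(N),
\]
and the finiteness hypothesis on $H^i(\Gamma, M)$ becomes equivalent to $f_M(0) \neq 0$. The quantity $\eta$ is multiplicative on short exact sequences by the snake lemma applied to multiplication by $T$, and $\eta(F) = 1$ for any finite $\Lambda$-module $F$ (the snake lemma applied to $F \to F$ forces $\#F[T] = \#F/TF$). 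By the structure theorem, $N$ is pseudo-isomorphic to
\[
\fE \;:=\; \bigoplus_i \Lambda/(p^{\mu_i}) \;\oplus\; \bigoplus_j \Lambda/(f_j(T));
\]
since the kernel and cokernel of this pseudo-isomorphism are finite, $\eta(N) = \eta(\fE)$, and the latter is the product of $\eta$ over the elementary factors.

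It therefore remains to verify the claim on each factor. For $\Lambda/(p^\mu)$, multiplication by $T$ is injective and $\Lambda/(p^\mu, T) \cong \Z/p^\mu$, giving $\eta = p^\mu$. For $\Lambda/(f_j)$ with $f_j$ distinguished and $f_j(0) \neq 0$ (as is forced by the finiteness of $\eta$), Weierstrass preparation identifies $\Lambda/(f_j)$ with a free $\Z_p$-module of rank $\deg f_j$ on which $T$ acts via the companion matrix of $f_j$; its $\Z_p$-determinant is $\pm f_j(0)$, so $\eta = f_j(0)$ up to a unit in $\Z_p^\times$. Multiplying over all elementary factors,
\[
\eta(N) \;\sim\; p^{\sum_i \mu_i} \prod_j f_j(0) \;=\; f_M(0) \;=\; c_0.
\]

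The main bookkeeping obstacle is showing that $\eta$ is insensitive to the finite kernel and cokernel of the pseudo-isomorphism; this comes down to combining its multiplicativity in short exact sequences with its triviality on finite modules, and once that is in hand the remaining computations on the two types of elementary factors are essentially mechanical.
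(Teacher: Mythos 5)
Your proof is correct and follows essentially the same route as the paper's: dualize to $N=M^\vee$, use invariance of the ratio $\#(N/TN)/\#N[T]$ under pseudo-isomorphism to reduce to the elementary module, and evaluate on each cyclic factor. The only difference is that you supply self-contained justifications (snake-lemma multiplicativity plus triviality on finite modules, and the companion-matrix determinant) for steps the paper either cites from \cite{zerbes09} or asserts directly.
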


\begin{proof}
It follows from \cite[Lemma 2.13]{zerbes09} that if $M'$ is another $\Lambda$-module which is pseudo-isomorphic to $M$ then the Euler characteristics match, i.e.
\[
\chi(\Gamma, M)=\chi(\Gamma, M').
\]
Denote by $X$ the Pontryagin dual $M^{\vee}$ and assume without loss of generality that $X$ is isomorphic to a direct sum of cyclic $\Lambda$-modules as follows
\[
X\xrightarrow{\sim} \left(\bigoplus_{i=1}^s \Lambda/(p^{\mu_i})\right)\oplus \left(\bigoplus_{j=1}^t \Lambda/(f_j(T)) \right).
\]
Note that for $i=0$, $1$ we have that
\[
\# H^i(\Gamma, M)=\# H^{1-i}(\Gamma, X).
\]
Therefore, we arrive at the following relations
\[
\#H^0(\Gamma, M)=\# X/T X=\prod_{i=1}^s p^{\mu_i}\times \prod_{j=1}^t \abs{f_j(0)}_p^{-1}=\abs{f_{M}(0)}_p^{-1},
\]
and
\[
\#H^1(\Gamma, M)=\# \ker \left( X\xrightarrow{\times T} X \right)=1.
\]
Combining the above equalities, we find that
\[
\chi(\Gamma, M)=\frac{\# H^0(\Gamma, M)}{\# H^1(\Gamma, M)}=\abs{f_{M}(0)}_p^{-1}\sim c_0.
\]
\end{proof}

\begin{remark}
The Euler characteristic is a priori possibly a negative power of $p$.
However, the above relation $\chi(\Gamma, M)\sim c_0$ shows that it is indeed an integer given by a non-negative power of $p$.
\end{remark}

\subsection{} We discuss the relationship between $\chi(\Gamma, M)$ and the vanishing of Iwasawa invariants.

\begin{proposition}\label{prop36}
Let $M$ be a cofinitely generated and cotorsion $\Lambda$-module for which the cohomology groups $H^i(\Gamma, M)$ are finite.
Then the following are equivalent
\begin{enumerate}
 \item\label{261} $\chi(\Gamma, M)=1$,
 \item\label{262} $f_{M}(T)=1$,
 \item\label{263} $\mu(M)=0$ and $\lambda(M)=0$,
 \item\label{264} $M$ has finite cardinality.
\end{enumerate}
\end{proposition}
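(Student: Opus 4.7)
The plan is to prove the four equivalences in the cycle $(1)\Leftrightarrow(2)\Leftrightarrow(3)\Leftrightarrow(4)$, since each adjacent pair is very short once the preceding results of the section are invoked.

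First I would establish $(1)\Leftrightarrow(2)$ as essentially a reformulation of Lemma \ref{lemma24}. Writing $f_{M}(T)=p^{\mu}g_{M}(T)$ with $g_{M}$ distinguished, the constant term of $f_{M}(T)$ is $c_0=p^{\mu}g_{M}(0)$. By Lemma \ref{lemma24}, $\chi(\Gamma,M)\sim c_0$, so $\chi(\Gamma,M)=1$ is equivalent to $c_0\in\Z_p^{\times}$. Since $g_{M}$ is distinguished, either $\deg g_{M}=0$ (forcing $g_{M}=1$) or $g_{M}(0)\in p\Z_p$; combined with the factor $p^{\mu}$, this shows $c_0$ is a unit if and only if $\mu=0$ and $g_{M}=1$, i.e., $f_{M}(T)=1$.

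Next I would note that $(2)\Leftrightarrow(3)$ is immediate from the definition: writing $f_{M}(T)=p^{\mu}g_{M}(T)$ with $g_{M}$ distinguished of degree $\lambda$, the identity $f_{M}(T)=1$ is equivalent to $\mu=0$ and $g_{M}(T)=1$, which is exactly $\mu(M)=\lambda(M)=0$.

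Finally, for $(3)\Leftrightarrow(4)$, I would use the structure theorem. If $\mu(M)=\lambda(M)=0$, then in the pseudo-isomorphism
\[
M^{\vee}\longrightarrow \left(\bigoplus_{i=1}^s \Lambda/(p^{\mu_i})\right)\oplus \left(\bigoplus_{j=1}^t \Lambda/(f_j(T))\right)
\]
the right-hand side is zero, so $M^{\vee}$ is pseudo-null. Since $M^{\vee}$ is a finitely generated module over the two-dimensional regular local ring $\Lambda\simeq\Z_p\lb T\rb$ that is pseudo-null, it is finite, hence so is $M$. Conversely, if $M$ is finite then $M^{\vee}$ is finite, so it has trivial characteristic ideal, giving $f_{M}(T)=1$ and therefore $\mu(M)=\lambda(M)=0$.

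The implications are all soft once Lemma \ref{lemma24} and the structure theorem are in hand; the only point requiring any care is the step $c_0\in\Z_p^{\times}\Rightarrow f_{M}=1$, where one must use both that $\mu$ contributes a factor of $p$ and that a distinguished polynomial of positive degree has constant term divisible by $p$. No step poses a genuine obstacle.
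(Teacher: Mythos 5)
Your proof is correct and follows essentially the same route as the paper: the equivalence $(1)\Leftrightarrow(2)$ via Lemma \ref{lemma24} and the shape of $f_M(T)=p^{\mu}g_M(T)$, the definitional equivalence $(2)\Leftrightarrow(3)$, and the finiteness equivalence via pseudo-nullity and the structure theorem (the paper phrases this last step as $(2)\Leftrightarrow(4)$ rather than $(3)\Leftrightarrow(4)$, but the argument is identical). Your added detail on why a distinguished polynomial of positive degree has non-unit constant term is a welcome clarification of a point the paper leaves implicit.
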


\begin{proof}
By Lemma \ref{lemma24}, the constant coefficient $c_0$ of $f_{M}(T)$ is a unit in $\Z_p$ if and only if $\chi(\Gamma, M)=1$.
Recall that $f_{M}(T)$ is the unique generator of the characteristic ideal of $M^{\vee}$ and is expressed as a power of $p$ times a distinguished polynomial.
Therefore, $c_0$ is a unit if and only if $f_{M}(T)=1$.
This proves that \eqref{261} and \eqref{262} are equivalent.

Recall that $\mu(M)$ is the power of $p$ dividing $f_{M}(T)$ and $\lambda(M)=\deg f_{M}(T)$.
Therefore, if $\mu(M)=0$ and $\lambda(M)=0$, then $f_{M}(T)\in \Z_p^{\times}$.
However, it must also be a distinguished polynomial.
Hence, $f_{M}(T)=1$.
Conversely, it is clear that if $f_{M}(T)=1$, then $\mu(M)=0$ and $\lambda(M)=0$.
As a result, \eqref{262} is equivalent to \eqref{263}.

It remains to show that \eqref{262} is equivalent to \eqref{264}.
If $f_{M}(T)=1$, then $M$ is pseudo isomorphic to $0$.
Hence, $M$ is finite.
On the other hand, if $M$ is finite, then any generator of the characteristic ideal of $M$ must be a unit in $\Lambda$.
Since $f_{M}(T)$ is defined to be a product of a power of $p$ and a distinguished polynomial, it follows that $f_{M}(T)=1$.
Therefore, \eqref{262} is equivalent to \eqref{264}.
\end{proof}

\section{Results for a fixed elliptic curve \texorpdfstring{$E$}{} and varying prime \texorpdfstring{$p$}{}}
\label{section: fixed E varying p}
Let $K=\Q(\sqrt{-d})$ be an imaginary quadratic field and $E$ a fixed elliptic curve over $\Q$ without complex multiplication.
Let $p$ be a prime at which $E$ has good ordinary reduction.
Let $S_p$ be the set of primes of $K$ above $p$.
The Iwasawa invariants $\mup$ and $\lap$ are associated to the Galois module $E[p^{\infty}]$ and depend on the prime $p$.
In this section, we study the variation of the Iwasawa invariants $\mup$ and $\lap$ as $p$ varies.

\subsection{}
First, we record certain well-known facts and conjectures.
Let $\widebar{\Q}$ be a choice of algebraic closure of $\Q$.
For any integer $N\geq 1$, define $E[N]:=\ker \left( E(\widebar{\Q})\xrightarrow{\times N} E(\widebar{\Q}) \right)$.
If $N'|N$, then consider the multiplication by $N/N'$-map
\[\iota_{N,N'}: E[N]\rightarrow E[N'].\]
The big Tate-module is defined as follows \[\op{T}(E):=\varprojlim_{N} E(\widebar{\Q})[N],\] where the inverse limit is taken with respect to the maps $\iota_{N, N'}$, whenever $N'|N$.
It is easy to see that $\op{T}(E)$ is non-canonically isomorphic to $\widehat{\Z}^2$, and admits a natural action of the absolute Galois group $\Gal(\widebar{\Q}/\Q)$.
Identify the group of $\widehat{\Z}$-linear automorphisms of $\op{T}(E)$ with $\GL_2(\widehat{\Z})$ and denote by
\[
\rho_{E}:\Gal(\widebar{\Q}/\Q)\rightarrow \GL_2(\widehat{\Z})
\]
the associated Galois representation.
Serre's open image theorem (see \cite[\S4 Theorem 3]{Serre72}) asserts that the image of $\rho_{E}$ has finite index in $\GL_2(\widehat{\Z})$.
Consider the residual representation \[
\barrho_{E,p}:\Gal(\widebar{\Q}/\Q)\rightarrow \GL_2(\Z/p\Z)\]
induced by $\Gal(\widebar{\Q}/\Q)$ on $E[p]$.
This residual representation may be reducible, i.e., contained in a conjugate of the Borel subgroup of upper triangular matrices $\mtx{\ast}{\ast}{0}{\ast}$ in $\GL_2(\Z/p\Z)$.
This holds for instance if $E(\Q)[p]\neq 0$.
However, since $[\GL_2(\hat{\Z}):\op{im}\rho_E]$ is finite, the residual representation $\barrho_{E,p}$ is surjective for all but finitely many primes $p$.

Recall that the Selmer group $\Selp$ was defined in \S\ref{section: background}.
Our first line of inquiry concerns conditions under which $\Selp$ is cotorsion as a $\Lambda$-module.
Consider the special case when $E$ is defined over $\Q$.
Let $N$ be the conductor of $E$.
Write $N=N^+ N^-$, where $N^+$ (resp. $N^-$) is divisible by primes that split (resp. are inert) in $K$.
If $N^-$ has an even number of prime divisors, then it is known by results of C.~Cornut \cite{Cor02} and V.~Vatsal \cite{Vat02} that $\Selp^{\vee}$ could have positive rank over $\Lambda$.
This is called the \textit{indefinite} setting, and we will study it in Section \ref{sec:indefinite}.

When $N^-$ has an odd number of prime divisors, the situation is more analogous to the cyclotomic Iwasawa theory of $E$.
This is called the \textit{definite} setting, and it will be our primary concern for the next six sections of the paper.
Let $\widetilde{E}$ denote the reduced curve modulo $p$ and set $a_p:=1+p-\#\widetilde{E}(\F_p)$.

\begin{hypothesis}
\label{ref hyp CR}
We refer to the following set of conditions as (CR).
\begin{enumerate}
\label{CR}
 \item $\barrho_{E,p}:\Gal(\widebar{\Q}/\Q)\rightarrow \GL_2(\F_p)$ is surjective.
 \item If $q$ is a prime with $q|N^-$ and $q\equiv \pm{1} \pmod{p}$, then $\barrho_{E,p}$ is ramified at $q$.
 \item $N^-$ is square-free and the number of primes dividing $N^-$ is odd.
 \item $a_p\not\equiv \pm 1\pmod{p}$.
\end{enumerate}
\end{hypothesis}
Then it follows from work of Vatsal \cite{Vat02}, M.~Bertolini--H.~Darmon \cite{BD05}, and Pollack--Weston \cite{PW11} that under (CR), the dual Selmer group $\Selp^{\vee}$ is a torsion module over the Iwasawa algebra $\Lambda$ with $\mup=0$.
Note that the last assumption $a_p\not\equiv \pm 1\pmod{p}$ needs to be added to the hypotheses of Pollack and Weston, according to \cite[Remark 1.4]{kim2017freeness}.
On the other hand, if $\Sel_{p^{\infty}}(E/K)$ is finite (i.e., the $\rank_{\Z} E(K)=0$ and $\Sh(E/K)[p^{\infty}]$ is finite), then it follows from \cite[Corollary 4.9]{Gre98_PCMS} that $\Selp^{\vee}$ is a torsion $\Lambda$-module.
However, under this hypothesis alone, it is not known that $\mup=0$.
Assume therefore, that $E$ is not CM, it is defined over the rationals, and that $N^-$ is square-free and divisible by an odd number of primes.
Note that for $p$ suitably large, the (CR) conditions are all satisfied provided $E$ has ordinary reduction at $p$ and $a_p\not\equiv \pm 1\pmod{p}$.
Therefore, their results give the following.

\begin{theorem}[Vatsal, Bertolini--Darmon, Pollack--Weston]
Let $E_{/\Q}$ be an elliptic curve without complex multiplication.
Assume that $N^-$ is square-free and divisible by an odd number of primes.
Consider the set of primes $\mathcal{P}_E$ for which
\begin{enumerate}
\item $E$ has good ordinary reduction at $p\in \mathcal{P}_E$,
 \item $a_p\not\equiv \pm 1\pmod{p}$.
 \end{enumerate}
Then for all but finitely many primes $p\in \mathcal{P}_E$, the Selmer group $\Selp^{\vee}$ is a $\Lambda$-torsion module and $\mup=0$.
\end{theorem}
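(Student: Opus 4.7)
The plan is to verify that for all but finitely many primes $p\in\mathcal{P}_E$, the full list of conditions in Hypothesis \ref{CR} is satisfied; once this is established, the conclusion is immediate from the cited work of Vatsal, Bertolini--Darmon, and Pollack--Weston (with the correction of Kim).

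Conditions (3) and (4) come essentially for free: condition (3) is an unconditional hypothesis of the theorem, and condition (4) is baked into the definition of $\mathcal{P}_E$. It therefore remains to handle conditions (1) and (2) and show that each fails for only finitely many $p$. For condition (1), I would invoke Serre's open image theorem: since $E$ is non-CM, the image of $\rho_E:\Gal(\widebar{\Q}/\Q)\to \GL_2(\widehat{\Z})$ has finite index in $\GL_2(\widehat{\Z})$, and a standard consequence (\cite[\S4]{Serre72}) is that the mod-$p$ representation $\barrho_{E,p}$ is surjective for every prime $p$ outside an explicit finite set $\Sigma_1$ depending only on $E$.

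For condition (2), the argument is purely elementary. Fix a prime $q\mid N^-$. The congruence $q\equiv \pm 1 \pmod{p}$ forces $p\mid q^2-1$, so there are only finitely many such $p$ for each $q$, and since $N^-$ has only finitely many prime divisors, the set $\Sigma_2$ of primes $p$ for which some $q\mid N^-$ satisfies $q\equiv\pm 1\pmod p$ is finite. In particular, for every $p\notin \Sigma_2$ (equivalently, for every $p$ larger than all prime divisors of $\prod_{q\mid N^-}(q^2-1)$), condition (2) is vacuously satisfied, and for $p\in \Sigma_2$ it is only an additional constraint at finitely many primes.

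Having verified that all four (CR) conditions hold for every $p\in \mathcal{P}_E\setminus (\Sigma_1\cup \Sigma_2)$, I would then quote \cite{Vat02,BD05,PW11} (as corrected in \cite{kim2017freeness}) to conclude that $\Selp^\vee$ is $\Lambda$-torsion with $\mu_p(E/K_\infty)=0$ for all such $p$. Since $\Sigma_1\cup\Sigma_2$ is a finite set, this is precisely the claim of the theorem. There is essentially no obstacle here beyond the bookkeeping just described; the substance of the result lies entirely in the three cited theorems, so the ``proof'' is really a routine verification that the hypotheses of those theorems are satisfied generically in $p$.
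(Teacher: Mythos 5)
Your proposal is correct and follows essentially the same route as the paper, which simply asserts that ``for $p$ suitably large, the (CR) conditions are all satisfied provided $E$ has ordinary reduction at $p$ and $a_p\not\equiv\pm 1\pmod{p}$'' and then invokes Vatsal, Bertolini--Darmon, and Pollack--Weston (with Kim's correction). Your write-up merely makes explicit the two finiteness checks the paper leaves implicit: surjectivity of $\barrho_{E,p}$ outside a finite set via Serre's open image theorem, and the vacuity of the ramification condition at $q\mid N^-$ once $p\nmid\prod_{q\mid N^-}(q^2-1)$.
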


It follows from the Hasse bound that a prime $p\geq 7$ is contained in $ \mathcal{P}_E$ if and only if $a_p\notin \{0, \pm 1\}$.
We recall the conjecture of Lang and Trotter.

\begin{conj}[Lang--Trotter]
\label{Lang Trotter}
Let $E_{/\Q}$ be an elliptic curve without complex multiplication and let $t$ be any integer.
For $x>0$,
let $\pi_{E,t}(x)$ be the number of primes $p\leq x$ such that $a_p(E)=t$.
Then there exists a constant $C_{E,t}\geq 0$ such that
\[\pi_{E,t}(x)\sim C_{E,t}\frac{\sqrt{x}}{\log x}.
\]
\end{conj}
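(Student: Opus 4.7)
The statement is the classical conjecture of Lang and Trotter, which remains widely open: to the best of my knowledge, no instance of the predicted asymptotic has been established unconditionally for any non-CM elliptic curve and any integer $t$. Accordingly, a ``proof proposal'' here can only be a sketch of the heuristic derivation that produces both the order of magnitude $\sqrt{x}/\log x$ and a candidate value for the constant $C_{E,t}$; a genuine proof is beyond the reach of present techniques.

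The heuristic factors into a local input and a global input. The local input is the Sato--Tate distribution, now a theorem for non-CM elliptic curves. Writing $\theta_p\in[0,\pi]$ for the Frobenius angle with $a_p=2\sqrt{p}\cos\theta_p$, the angles equidistribute on $[0,\pi]$ against $\tfrac{2}{\pi}\sin^2\theta\,d\theta$. Treating $a_p$ as a discretisation of a continuous Sato--Tate random variable on the Hasse interval $[-2\sqrt{p},2\sqrt{p}]$, one expects the integer value $a_p=t$ to be attained with probability
\[
\frac{1}{\pi\sqrt{p}}\sqrt{1-\frac{t^2}{4p}}\ \sim\ \frac{1}{\pi\sqrt{p}}.
\]
Summing over primes $p\le x$ by partial summation against the prime number theorem gives $\sum_{p\le x}\tfrac{1}{\pi\sqrt{p}}\sim \tfrac{2\sqrt{x}}{\pi\log x}$, which accounts for both the $\sqrt{x}/\log x$ shape and the universal factor $2/\pi$ in the Lang--Trotter constant.

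The global input refines $2/\pi$ to the curve-dependent constant $C_{E,t}$. For each prime $\ell$, the Chebotarev density theorem applied to the mod-$\ell$ representation $\barrho_{E,\ell}:\Gal(\widebar{\Q}/\Q)\to\GL_2(\F_\ell)$ computes the exact density of primes with $a_p\equiv t\pmod\ell$ in terms of the proportion of matrices in $\op{im}\barrho_{E,\ell}$ of the correct trace. Combining these densities across all $\ell$, under the key independence assumption that the Sato--Tate statistic and the congruence conditions at distinct primes $\ell$ are asymptotically independent, produces an Euler product whose value is $C_{E,t}$; convergence of the product is guaranteed by Serre's open image theorem, which forces $\op{im}\barrho_{E,\ell}=\GL_2(\F_\ell)$ for all but finitely many $\ell$.

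The main obstacle, and the reason the conjecture remains open, is precisely this independence assumption: the archimedean Sato--Tate behaviour and the mod-$\ell$ congruence behaviour must decouple uniformly in $\ell$, and no known technique delivers quantitative control of this kind. The best unconditional upper bounds are of the form $\pi_{E,t}(x)\ll x/(\log x)^{5/4}$ (Serre) and $\pi_{E,t}(x)\ll x^{4/5}$ or slightly sharper (Murty--Murty--Saradha, Thorner--Zaman); under GRH one obtains an $O(x^{7/8})$-type bound. Even the non-vanishing of $C_{E,t}$ for some $t\ne 0$ is not known for any non-CM curve. Consequently, within Section~\ref{section: fixed E varying p} the conjecture is invoked as a hypothesis rather than derived.
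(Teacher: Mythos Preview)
Your assessment is correct: the paper states this as a conjecture and offers no proof whatsoever, so there is nothing to compare against. Your heuristic sketch of the Lang--Trotter constant via Sato--Tate combined with mod-$\ell$ Chebotarev densities is standard and goes well beyond what the paper does, which is simply to quote the conjecture and use it as a hypothesis to predict the growth rate of $\mathcal{P}_E^c(x)$ and of the anomalous primes.
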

\noindent Denote by $\mathcal{P}_E^c(x)$ the set of primes $p\leq x$ such that $p\notin \mathcal{P}_E$.
The Lang--Trotter conjecture predicts that
\[
\mathcal{P}_E^c(x)\sim C\frac{\sqrt{x}}{\log x},
\]
where, $C=C_{E,-1}+C_{E,0}+C_{E,1}$.
Thus, the conjecture gives an exact order of growth for the number of primes at which $\mu=0$ is not known.

\subsection{} Next, we describe the variation of the $\lambda$-invariant, by invoking the Euler characteristic formula.
Assume that $E_{/\Q}$ is an elliptic curve such that the Mordell--Weil group $E(K)$ and the Tate--Shafarevich group $\Sh(E/K)$ are finite.
Equivalently, $\Sel_{p^{\infty}}(E/K)$ is finite and $\Selp$ is cotorsion as a $\Lambda$-module.
Further, assume that $N^-$ is square-free and divisible by an odd number of primes.
By Lemma $\ref{lemma23}$, the cohomology groups $H^i(\Gamma, \Selp)$ are finite and the Euler characteristic $\chi(\Gamma, E[p^{\infty}]):=\chi(\Gamma, \Selp)$ is defined.
By Proposition $\ref{prop36}$,
\[
\mup=0\text{ and }\lap=0\Leftrightarrow \chi(\Gamma, E[p^{\infty}])=1.
\]

There is a formula for the Euler characteristic which may be interpreted as a $p$-adic analogue of the Birch and Swinnerton-Dyer formula.
The following discussion applies to \emph{any} elliptic curve $E$ defined over $K$ with good reduction at $p$.
Denote by $c_v=c_v(E)=[E(K_v):E_0(K_v)]$ the local Tamagawa factor at a finite prime $v\nmid p$ and $c_v^{(p)}:=|c_v|_p^{-1}$ be its $p$-part.
Set $\omega$ to be an invariant differential on $E$ and $\omega_v^*$ the N\'eron differential at $v$.
Denote by $ \Reg(E/K)$ the regulator of $E$ over $K$, i.e., the determinant of the canonical height pairing on $E(K)/E(K)_{\tor}$.
Set $D_K$ to denote the absolute discriminant of $K$ and $r:=\rank_{\Z} E(K)$.
The Birch and Swinnerton-Dyer conjecture predicts that the order of vanishing of the Hasse--Weil $L$-function $L(E/K,s)$ at $s=1$ is $r$.
Further, it implies that $\Sh(E/K)$ is finite, and on expanding $L(E/K,s)$ at $s=1$,
\[L(E/K,s)=a_r(s-1)^r+a_{r+1}(s-1)^{r+1}+\dots \]
where the leading coefficient $a_r$ is given by
\begin{equation}\label{eq:bsd}
a_r=\frac{\# \Sh(E/K) \times  \Reg(E/K)\times \Phi(E/K)}{\left(\# E(K)_{\tor}\right)^2 \times (D_K)^{\frac{1}{2}}}\times 2\int_{E(\mathbb{C})}\omega\wedge \bar{\omega}.
\end{equation}
Here, $\Phi(E/K)$ denotes the product
\[\Phi(E/K):=\prod_{v\nmid \infty} c_v \abs{\frac{\omega}{\omega_v^*}}_v.\]
As Lemma $\ref{lemma24}$ shows, when $r=0$, the Euler characteristic is related to the constant term of the characteristic element.
Thus, the Iwasawa main conjecture predicts that it is related to the constant term of the $p$-adic L-function.
Let $k_\p$ be the residue field of $K$ at a prime $\p|p$ and $\widetilde{E}$ be the reduction of $E$ at $\p$.
The following result is proved by extending the method in \cite[chapter 3]{CS00_GCEC}, in which the Euler characteristic formula for the cyclotomic $\Z_p$-extension is proved.

\begin{theorem}\label{thm:van-ord}
Let $E$ be an elliptic curve with good ordinary reduction at each prime above $p$ in $K$.
Assume that the following conditions are satisfied
\begin{enumerate}
 \item the Mordell--Weil group $E(K)$ is finite,
 \item $\Sh(E/K)[p^{\infty}]$ is finite.
\end{enumerate}
Then the Euler characteristic $\chi(\Gamma, E[p^{\infty}])$ is well-defined, and given by the formula
\begin{equation}\label{ecf}
\chi(\Gamma, E[p^{\infty}])\sim \frac{\# \Sh(E/K)[p^{\infty}]\times \left(\prod_{\p|p}\# \widetilde{E}(\kappa_\p)\right)^2}{\left(\# E(K)[p^\infty]\right)^2}\cdot \prod_{v\in S_{\ns}\setminus S_p} c_v^{(p)}(E).
\end{equation}
The set $S_{\ns}$ is the set of primes which are finitely decomposed in $K_{\infty}/K$.
\end{theorem}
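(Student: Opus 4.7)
The plan is to adapt the strategy of \cite[Chapter 3]{CS00_GCEC}, where the analogous formula is established for the cyclotomic $\Z_p$-extension. Set $M := \Selp$. Well-definedness of $\chi(\Gamma, M)$ follows directly from the hypotheses: the tautological sequence
\[
0 \to E(K)\otimes\Q_p/\Z_p \to \Sel_{p^\infty}(E/K) \to \Sh(E/K)[p^\infty] \to 0
\]
shows $\Sel_{p^\infty}(E/K)$ is finite, Lemma \ref{lemma23} then makes $M^\Gamma$ finite, and Lemma \ref{balancedrank} forces $M_\Gamma = H^1(\Gamma,M)$ to be finite as well. By Lemma \ref{lemma24} it therefore suffices to identify $\chi(\Gamma, M)$ up to a unit of $\Z_p$.

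The main body of the proof is the ``fundamental diagram'' comparing the Selmer defining sequences over $K$ and over $\Kinf$, with vertical arrows given by restriction. Greenberg's control theorem \cite[Theorem 1]{Gre03} guarantees that the restriction map $\Sel_{p^\infty}(E/K)\to M^\Gamma$ has finite kernel and cokernel. Applying the snake lemma to this diagram, and taking cardinalities, expresses $|M^\Gamma|$ (up to $\Z_p^\times$) as $|\Sel_{p^\infty}(E/K)|$ multiplied by a product of local factors $|\ker\gamma_v|/|\coker\gamma_v|$ associated to the restriction maps $\gamma_v\colon J_v(E/K)\to J_v(E/\Kinf)^\Gamma$ for each $v\in S$, corrected by a global torsion contribution from $H^1(\Gamma, E[p^\infty](\Kinf))$ which will eventually produce the $|E(K)[p^\infty]|^2$ in the denominator of \eqref{ecf}. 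Since $E(K)$ is finite, $|\Sel_{p^\infty}(E/K)| = \#\Sh(E/K)[p^\infty]$, accounting for the Sha factor in \eqref{ecf}.

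Computing each local factor splits into three cases according to the behavior of $v$ in the tower $\Kinf/K$. If $v$ is infinitely decomposed in $\Kinf/K$ (i.e.\ $v\notin S_{\ns}$), then the local tower is trivial, $\gamma_v$ is an isomorphism, and the contribution is a unit. For $v\in S_{\ns}\setminus S_p$, the local extension $K_{\infty,v}/K_v$ is a $\Z_p$-extension of a local field of residue characteristic different from $p$, and a standard argument (cf.\ \cite[Lemma 3.4]{CS00_GCEC}) identifies the contribution as the $p$-part $c_v^{(p)}(E)$ of the Tamagawa number. For $v|p$, the good ordinary hypothesis yields the formal-group short exact sequence $0\to\widehat{E}(\mathfrak{m}_v)\to E(K_v)\to \widetilde{E}(\kappa_v)\to 0$; combining Tate local duality with the structure of this formal group over the anticyclotomic completion produces a contribution of the form $\#\widetilde{E}(\kappa_v)^2/|E(K_v)[p^\infty]|^2$, the square reflecting the symmetric roles of the kernel and cokernel of $\gamma_v$. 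Assembling all contributions — and cancelling the local torsion factors $|E(K_v)[p^\infty]|$ at primes above $p$ against the global torsion factor to leave $|E(K)[p^\infty]|^2$ in the denominator — yields \eqref{ecf}. The main obstacle is the case $v|p$: one must treat Tate local duality carefully over the deeply ramified completions $K_{\infty,w}$ of the anticyclotomic tower, whose shape depends on whether the prime $\p|p$ of $K$ splits or is inert in $\Kinf/K$, and one must verify that the two resulting local patterns combine symmetrically into the square $\left(\prod_{\p|p}\#\widetilde{E}(\kappa_\p)\right)^2$ appearing in \eqref{ecf}.
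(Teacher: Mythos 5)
Your overall strategy---running the Coates--Sujatha argument (fundamental diagram, control theorem, snake lemma, case-by-case local factors) in the anticyclotomic setting---is exactly the route the paper takes; the paper simply outsources the bulk of it by citing van Order's Theorem 1.1 and then supplies the one genuinely new ingredient, namely the corrected local factor at $v\in S_{\ns}\setminus S_p$, where $\gamma_v$ is surjective with finite kernel of order $c_v^{(p)}$ because for $\ell\neq p$ the local anticyclotomic and cyclotomic $\Z_p$-extensions coincide (both being the unramified one). You identify that same correction via \cite[Lemma 3.4]{CS00_GCEC}, so the approaches match.

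Two soft spots, though. First, you label the $v\mid p$ case ``the main obstacle'' and then do not carry it out; this is the one place where your write-up stops short of a proof, whereas the paper delegates it explicitly to \cite[Chapter 3]{CS00_GCEC} and \cite{van15}, where the good ordinary hypothesis and Tate duality over the deeply ramified local tower are handled. Second, your bookkeeping at $p$ is not right as stated: in the standard accounting the denominator $\left(\# E(K)[p^\infty]\right)^2$ arises entirely from $\# H^0\left(\Gamma, E[p^\infty](\Kinf)\right)\cdot \# H^1\left(\Gamma, E[p^\infty](\Kinf)\right)$, while each $\p\mid p$ contributes $\left(\#\widetilde{E}(\kappa_\p)\right)^2$ up to units on its own; there is no cancellation of local torsion terms $\# E(K_v)[p^\infty]$ against the global one, and indeed $\prod_{v\mid p}\# E(K_v)[p^\infty]$ need not equal $\# E(K)[p^\infty]$. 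A minor further point: Lemma \ref{lemma23} assumes $E(K)[p]=0$, which is not a hypothesis here; for well-definedness one should instead first note that $\Selp$ is cotorsion by \cite[Corollary 4.9]{Gre98_PCMS} and then deduce finiteness of $\Selp^{\Gamma}$ from the control theorem and of $H^1(\Gamma,\Selp)$ from Lemma \ref{balancedrank}.
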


\begin{proof}
The result is due to J.~van Order \cite[Theorem 1.1]{van15}, except that there is a slight inaccuracy in the Euler characteristic formula she provides.
To prove Theorem 1.1 (in \emph{loc. cit.}), the author requires Corollary 5.3 (in \emph{loc. cit}) which invokes an argument from \cite[Lemma 3.3]{Coa99}.
However, we note that this argument applies \emph{only} for the $\GL_2$ extension and not for the anticyclotomic $\Z_p$-extension.
In particular, when $v$ is a prime of $K$ not above $p$ which is finitely decomposed in the anticyclotomic $\Z_p$-extension, the map
\[
\gamma_v : J_v(E/K) \rightarrow J_v(E/K_{\infty,v})^{\Gamma}
\]
is surjective but not necessarily the zero map.
Using the argument in \cite[Lemma 3.4]{CS00_GCEC}, one can show that $\ker \gamma_v$ is finite of order $c_v^{(p)}$.
The same argument goes through because this computation is purely local and for an $\ell$-adic field (with $\ell\neq p$), the cyclotomic and the anticyclotomic $\Z_p$-extensions are the same when the latter is non-trivial.
In particular, both give the maximal unramified $\Z_p$ extension.
Therefore, each prime $v\in S_{\ns}\setminus S_p$ contributes a factor of $c_v^{(p)}(E)$ (instead of $c_v^{(p)}(E) \abs{L_v(E,1)}_p$).
The remainder of the proof of the (anticyclotomic) Euler characteristic formula can also be argued as in \cite[Chapter 3]{CS00_GCEC} or \cite{van15}.
\end{proof}

We remark in passing that when $\rank_{\Z} E(K)$ is positive, there is no known formula that generalizes \eqref{ecf} for the anticyclotomic $\Z_p$-extension.
However, for the cyclotomic $\Z_p$-extension, the formula which applies in the higher rank setting is due to Perrin-Riou and Schneider, see \cite{schneider1, schneider2}.
Note that in \eqref{ecf}, $\#E(K)[p^{\infty}]=1$ when $\barrho_{E,p}:\Gal(\widebar{K}/K)\rightarrow \GL_2(\Z/p\Z)$ is irreducible, which is the case for all but a finite number of prime numbers $p$ when $E$ does not have complex multiplication.
At this point, we introduce some simplifying notation.
Let $p$ be a prime and $K$ an imaginary quadratic field.
Consider the various quantities in \eqref{ecf} and set
\begin{itemize}
 \item $\Sh_p=\Sh_p(E/K):=\#\Sh(E/K)[p^{\infty}]$.
 \item For $\p|p$, set $\alpha_\p=\alpha_\p(E/K):=\#\widetilde{E}(k_\p)[p^{\infty}]$, and $\alpha_p=\alpha_p(E/K):=\prod_{\p|p} \alpha_\p$.
 \item For a non-archimedean prime $v\nmid p$, set $\tau_v=\tau_v(E/K)=c_v^{(p)}:=\abs{c_v}_p^{-1}$, and for $\ell\neq p$, write $\tau_\ell=\tau_\ell(E/K):=\prod_{v|\ell} \tau_v$.
\end{itemize}
The numbers $\Sh_p$, $\alpha_p$, and $\tau_\ell$ are all of the form $p^N$ for some $N\in \Z_{\geq 0}$.
When $\#E(K)[p]=1$, \eqref{ecf} simply becomes
\begin{equation}
 \chi(\Gamma, E[p^{\infty}])=\alpha_p^2\times \left(\Sh_p\times \prod_{v\in S_{\ns}\setminus S_p}\tau_v \right).
\end{equation}
Recall that we have assumed the finiteness of the Tate--Shafarevich group $\Sh(E/K)$, i.e., $\Sh_p=1$ for all but finitely many primes.
Furthermore, for any prime $v\nmid p$, $c_v^{(p)}=1$ for all but finitely many primes $p$.
However, it is possible for $\alpha_p\neq 1$ for an infinite number of primes $p$.
Such primes $p$ for which $\alpha_p\neq 1$ are known as \emph{anomalous}.
The following definition makes this notion precise.

\begin{definition}
Let $E_{/\Q}$ be an elliptic curve.
A prime $p$ at which $E$ has good reduction and $\widetilde{E}(\F_p)[p]\neq 0$ is said to be a \emph{$\Q$-anomalous} prime.
For $K$ any number field and $E_{/K}$ an elliptic curve with good (ordinary) reduction at $p$, the prime $p$ is called \emph{$K$-anomalous}\footnote{It is standard terminology to refer to the prime as being anomalous. However, we emphasize that the property of being anomalous depends on the elliptic curve as well.} if $\widetilde{E}(k_v)[p]\neq 0$ for some prime $v|p$ (with residue field $k_v$).
\end{definition}

\begin{lemma}\label{lemma46}
Suppose that $K$ is an imaginary quadratic field and $E_{/\Q}$ is an elliptic curve and $p\geq 7$ be a prime.
Set $a_p=1+p-\#\widetilde{E}(\F_p)$, there are two cases to consider.
\begin{enumerate}
 \item If $p$ is ramified or split in $K$, then $E$ is $K$-anomalous if and only if $a_p=1$.
 \item If $p$ is inert in $K$, then $E$ is $K$-anomalous if and only if $a_p=\pm 1$.
\end{enumerate}
\end{lemma}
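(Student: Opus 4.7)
The plan is to reduce the statement to a congruence condition on $a_p$ via Hasse's bound, exploiting that $E$ is defined over $\Q$ so the reduction at any prime $v \mid p$ in $K$ is the base change of $\widetilde{E}/\F_p$ to the residue field $k_v$.

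First, I would identify $k_v$ in each case: if $p$ is split or ramified in $K$, then $k_v = \F_p$ for every prime $v \mid p$; if $p$ is inert, then there is a unique prime $v \mid p$ with $k_v = \F_{p^2}$. Thus $K$-anomalousness is equivalent to $p \mid \#\widetilde{E}(k_v)$ for the corresponding residue field. Let $\alpha, \beta$ be the Frobenius eigenvalues at $p$, so $\alpha + \beta = a_p$ and $\alpha\beta = p$.

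Next I would compute the relevant point counts. In case (1) I simply use
\[
\#\widetilde{E}(\F_p) = p + 1 - a_p,
\]
so $p \mid \#\widetilde{E}(\F_p)$ iff $a_p \equiv 1 \pmod{p}$. In case (2), using $\alpha^2 + \beta^2 = a_p^2 - 2p$, I would compute
\[
\#\widetilde{E}(\F_{p^2}) = p^2 + 1 - (\alpha^2+\beta^2) = (p+1)^2 - a_p^2 = (p+1-a_p)(p+1+a_p),
\]
so $p \mid \#\widetilde{E}(\F_{p^2})$ iff $a_p \equiv \pm 1 \pmod{p}$.

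Finally I would close the gap between the congruence $a_p \equiv \pm 1 \pmod p$ and the equality $a_p = \pm 1$ by invoking Hasse's bound $|a_p| \leq 2\sqrt{p}$. For $p \geq 7$ we have $2\sqrt{p} < p$, so the only integers in $[-2\sqrt{p},\,2\sqrt{p}]$ that are congruent to $1$ (resp.\ to $\pm 1$) modulo $p$ are $a_p = 1$ (resp.\ $a_p = \pm 1$). The hypothesis $p \geq 7$ is doing exactly this work; there is no deeper obstacle, as the remainder is bookkeeping. The only subtle point worth flagging is the definition of $K$-anomalousness in the split case, where there are two primes above $p$ but both have residue field $\F_p$, so the ``some $v \mid p$'' quantifier collapses to the single condition $p \mid \#\widetilde{E}(\F_p)$, giving a uniform statement in case (1).
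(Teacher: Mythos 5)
Your proof is correct and follows essentially the same route as the paper's: identify the residue field for each splitting type, use $\#\widetilde{E}(\F_{p^2})=(p+1-a_p)(p+1+a_p)$ in the inert case, and upgrade the congruence $a_p\equiv \pm 1\pmod{p}$ to an equality via the Hasse bound. One pedantic note: the inequality actually needed to rule out the value $a_p=1-p$ is $p-1>2\sqrt{p}$, which holds precisely for $p\geq 7$ (and fails for $p=5$), rather than merely $2\sqrt{p}<p$; with that adjustment your closing step is exactly the role the hypothesis $p\geq 7$ plays.
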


\begin{proof}
We consider the two cases separately.

\emph{Case 1:} First consider the case when $p$ is ramified or split in $K$.
In this case, the residue field $k_v=\F_p$ for each of the primes $v|p$ of $K$.
By the Hasse bound, $E$ is $\Q$-anomalous if and only if $\#\widetilde{E}(\F_p)=p$, i.e., $a_p=1$.

\emph{Case 2:} Next, suppose that $p$ is inert in $K$.
Then there is one prime $v|p$ above $p$ with $k_v=\F_{p^2}$.
Let $\alpha, \beta$ be the roots of $X^2-a_p X+p$.
It follows from \cite[Chapter 5, Theorem 2.3.1]{silverman2009} that
\[
\# \widetilde{E}(\F_{p^2})=p^2+1-(\alpha^2+\beta^2)=(p+1)^2-a_p^2=(p+1-a_p)(p+1+a_p).
\]
According to the Hasse bound, $p$ divides $\# \widetilde{E}(\F_{p^2})$ if and only if $a_p=\pm 1$.
\end{proof}

Let $\Pi_E(x)$ (resp. $\Pi_{E,K}(x)$) be the number of primes $p\leq x$ that are $\Q$-anomalous (resp. $K$-anomalous).
The Lang--Trotter conjecture predicts that there are constants $0\leq c\leq C$ such that for large enough values of $x$,
\[\begin{split}
& \Pi_E(x)\sim c\frac{\sqrt{x}}{\log x} \quad \textrm{and}\\
& c\frac{\sqrt{x}}{\log x}\leq \Pi_{E,K}(x)\leq C\frac{\sqrt{x}}{\log x}.
\end{split}\]
In this context, the best known unconditional bound follows from a result of V.~K.~Murty (see \cite{Mur97})
\[
\Pi_E(x)\ll \frac{x (\log \log x)^2}{(\log x)^2}.
\]
It is indeed possible for the constants $c,C$ to be zero.
In fact, there are curves for which the number of $\Q$-anomalous primes is finite.
Such curves are studied in greater detail in the next section.


\begin{theorem}
\label{th46}
Let $E_{/\Q}$ be an elliptic curve without complex multiplication and $K$ be an imaginary quadratic field.
Assume that $\rank_{\Z} E(K)=0$ and $\Sh(E/K)$ is finite.
Let $\mathfrak{M}$ be the finite set of primes $p$ such that
\begin{enumerate}
\item $E$ has good ordinary reduction at $p$,
\item $E(K)[p]\neq 0$,
\item $\Sh_{p}\times \prod_{v\in S_{\ns}\setminus S_{p}}c_v^{(p)} \neq 1.$
\end{enumerate}
Let $p\notin \mathfrak{M}$ be a prime of good ordinary reduction of $E$.
Then the following assertions hold.
\begin{enumerate}
\item If $p$ is $K$-anomalous, then $\mup>0$ or $\lap>0$ (or both).
\item If $p$ is \emph{not} $K$-anomalous, then $\Selp=0$.
\end{enumerate}
\end{theorem}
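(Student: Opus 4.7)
The plan is to evaluate $\chi(\Gamma, E[p^\infty])$ explicitly via Theorem \ref{thm:van-ord} and then read off both assertions using Proposition \ref{prop36}. Since $\rank_{\Z} E(K)=0$ and $\Sh(E/K)$ is finite, $E(K)$ is finite and the hypotheses of Theorem \ref{thm:van-ord} are satisfied. For $p\notin\mathfrak{M}$ each of the defining conditions (1)--(3) of $\mathfrak{M}$ fails: $E$ has good ordinary reduction at $p$, $E(K)[p]=0$ (so $\#E(K)[p^\infty]=1$), and $\Sh_p\cdot\prod_{v\in S_{\ns}\setminus S_p}\tau_v=1$. Since $\Sh_p$ and each $\tau_v$ is a non-negative power of $p$, this last equality forces $\Sh_p=1$ and each $\tau_v=1$ individually. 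Substituting into \eqref{ecf}, and using that both sides are non-negative powers of $p$ differing by a unit in $\Z_p^\times$, one obtains
\[
\chi(\Gamma, E[p^\infty]) = \alpha_p^2.
\]

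Assertion (1) is then immediate: if $p$ is $K$-anomalous then $\alpha_p\geq p$, so $\chi(\Gamma,E[p^\infty])\geq p^2>1$, which by the equivalence of (1) and (3) in Proposition \ref{prop36} rules out $\mu_p=\lambda_p=0$, so at least one of the two Iwasawa invariants must be strictly positive.

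For assertion (2), if $p$ is not $K$-anomalous then $\alpha_p=1$ and $\chi(\Gamma, E[p^\infty])=1$. Proposition \ref{prop36} then gives $\mu_p=\lambda_p=0$ and $\Selp$ finite. To upgrade ``finite'' to ``zero'', use that the exact sequence
\[
0\to E(K)\otimes\Q_p/\Z_p\to\Sel_{p^\infty}(E/K)\to\Sh(E/K)[p^\infty]\to 0,
\]
combined with $E(K)$ finite and $\Sh_p=1$, gives $\Sel_{p^\infty}(E/K)=0$. The control map $\iota\colon\Sel_{p^\infty}(E/K)\to\Selp^\Gamma$ has trivial kernel (since $E(K)[p]=0$), and tracing the derivation of \eqref{ecf} shows that $\coker(\iota)$ is measured by precisely the local factors that multiply to $\alpha_p^2\prod_{v}\tau_v=1$. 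Hence $\Selp^\Gamma=0$, and since the pro-$p$ group $\Gamma\cong\Z_p$ acts on the finite $p$-primary module $\Selp$ through a finite $p$-quotient, the standard orbit-counting argument for $p$-groups forces $\Selp=0$. The hardest step will be this last one: tracking each local contribution to $\coker(\iota)$ through the proof of Theorem \ref{thm:van-ord} and confirming that triviality of $\alpha_p^2\prod_v\tau_v$ is exactly what makes the control map an isomorphism.
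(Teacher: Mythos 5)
Your argument is correct and, for everything up to the last step, is exactly the paper's proof: both reduce the Euler characteristic formula of Theorem \ref{thm:van-ord} to $\chi(\Gamma,E[p^\infty])=\alpha_p^2$ for $p\notin\mathfrak{M}$ and then read off assertion (1) and the first half of assertion (2) from Proposition \ref{prop36}. Where you diverge is the upgrade from ``$\Selp$ finite'' to ``$\Selp=0$.'' The paper settles this in one line by invoking the fact that $\Selp$ has no proper finite-index $\Lambda$-submodules (Greenberg's \cite[Proposition 4.14]{Gre99}, which generalizes to the anticyclotomic setting), so finiteness forces vanishing. You instead propose to show $\Selp^\Gamma=0$ directly: $\Sel_{p^\infty}(E/K)=0$ since the rank is zero, $\Sh_p=1$ and $E(K)[p]=0$, and then the control map is surjective because each local kernel $\ker\gamma_v$ is trivial; the pro-$p$ fixed-point argument then kills $\Selp$. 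This route is viable --- the orders of the $\ker\gamma_v$ are precisely the local factors $c_v^{(p)}$ and (at $v\mid p$) powers of $\alpha_v$ appearing in \eqref{ecf}, and the cokernel of $\iota$ injects into their product --- but note that you have only sketched it: one must actually verify triviality of the local kernels at the primes above $p$ and at the primes splitting completely in $K_\infty/K$, which is the content of the proof of Theorem \ref{thm:van-ord} rather than its statement. What your approach buys is independence from the ``no finite-index submodules'' result; what the paper's buys is brevity, since that result is already available and its proof is essentially the same local analysis you are deferring.
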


\begin{proof}
Assume throughout that $E$ has good ordinary reduction at the prime $p$.
Since the Selmer group $\Sel_{p^{\infty}}(E/K)$ is finite, it follows that $\Selp^{\vee}$ is a torsion $\Lambda$-module, see \cite[Corollary 4.9]{Gre98_PCMS}.
Furthermore, the Euler characteristic $\chi(\Gamma, E[p^{\infty}])$ is well defined and is given by the formula
\[
\chi(\Gamma, E[p^{\infty}])\sim \frac{\alpha_p^2\times \left(\Sh_p\times \prod_{v\in S_{\ns}\setminus S_p}c_v^{(p)} \right)}{\left( \# E(K)[p^\infty]\right)^2}.
\]
For $p\notin \mathfrak{M}$, we have that
\[
\chi(\Gamma, E[p^{\infty}])=\alpha_p^2.
\]
Thus, the Euler characteristic $\chi(\Gamma, E[p^{\infty}])=1$ if and only if $p$ is not $K$-anomalous.

If $p$ is $K$-anomalous, then Proposition \ref{prop36} asserts that either $\mup>0$ or $\lap>0$ (or both).
On the other hand, if $p$ is not $K$-anomalous, then $\mup=0$ and $\lap=0$.
This implies that $\Selp$ is finite.
The result of Greenberg \cite[Proposition 4.14]{Gre99} for the cyclotomic $\Z_p$-extension generalizes verbatim to our setting, to show that $\Selp$ has no proper finite index submodules.
Therefore, if it is finite, then it necessarily follows that $\Selp=0$.
\end{proof}

\begin{corollary}
\label{cor: to main thm of section 4}
Let $E_{/\Q}$ be an elliptic curve and $K$ be an imaginary quadratic field such that the conditions of Theorem \ref{th46} are satisfied.
Let $x>0$ and $\Omega_E(x)$ be the number of primes $p\leq x$ satisfying the following conditions.
\begin{enumerate}
 \item $E$ has good ordinary reduction at $p$,
 \item $p$ splits in $K$,
 \item $\Selp\neq 0$.
\end{enumerate}
Then
\[
\Omega_E(x)\ll \frac{x (\log \log x)^2}{(\log x)^2}.
\]
\end{corollary}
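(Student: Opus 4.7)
The plan is to combine Theorem \ref{th46}, Lemma \ref{lemma46}, and V.~K.~Murty's unconditional upper bound on the counting function of anomalous primes. The proof is essentially an unpacking of definitions and prior results, so I would keep the argument short.

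First I would apply Theorem \ref{th46}: for any prime $p \notin \mathfrak{M}$ at which $E$ has good ordinary reduction, the Selmer group $\Selp$ vanishes unless $p$ is $K$-anomalous. Since $\mathfrak{M}$ is finite (the hypotheses of Theorem \ref{th46} force each of the three defining conditions to fail for all but finitely many $p$) and the primes $p<7$ are finite in number, their combined contribution to $\Omega_E(x)$ is $O(1)$. Thus it suffices to bound the number of primes $7 \leq p \leq x$ that split in $K$, are of good ordinary reduction for $E$, and are $K$-anomalous.

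Next I would invoke Lemma \ref{lemma46}(1): if $p\geq 7$ splits in $K$, then $p$ is $K$-anomalous precisely when $a_p(E)=1$, which is exactly the condition for $p$ to be $\Q$-anomalous. Hence
\[
\Omega_E(x) \leq \Pi_E(x) + O(1).
\]
The conclusion now follows from the unconditional bound of V.~K.~Murty \cite{Mur97} cited earlier in this section,
\[
\Pi_E(x) \ll \frac{x(\log\log x)^2}{(\log x)^2}.
\]

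The only nontrivial analytic input is Murty's estimate; everything else is a direct assembly of Theorem \ref{th46} and Lemma \ref{lemma46}. There is no substantive obstacle to overcome, and the restriction that $p$ split in $K$ is exactly what is needed to convert the $K$-anomalous condition into the $\Q$-anomalous condition (the inert case would instead force $a_p=\pm 1$, and while Murty's bound still applies to each residue class, splitting guarantees we land in precisely the setting addressed by his theorem).
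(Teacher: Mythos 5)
Your proposal is correct and follows essentially the same route as the paper's proof: apply Theorem \ref{th46} to reduce to counting $K$-anomalous primes, use the splitting hypothesis (via Lemma \ref{lemma46}) to identify these with $\Q$-anomalous primes, and conclude with Murty's unconditional bound. Your treatment is slightly more careful about discarding the finitely many primes in $\mathfrak{M}$ and those below $7$, but the argument is the same.
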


\begin{proof}
Theorem \ref{th46} asserts that if $p$ is a prime for which the above conditions are satisfied, then $\alpha_p\neq 1$, i.e., $p$ is $K$-anomalous.
Since $p$ splits in $K$, we have that $\alpha_p\neq 1$ if and only if $\#\widetilde{E}(\F_p)[p]\neq 0$, i.e., $p$ is $\Q$-anomalous.
The result follows from the bound in \cite{Mur97} on the number of $\Q$-anomalous primes $p\leq x$.
\end{proof}

\section{Ridgdill curves}
\label{section: Ridgdill}
The notion of a finitely anomalous elliptic curve is due to P.~C.~Ridgdill, see \cite{Rid10}.

\begin{definition}
Let $E_{/\Q}$ be an elliptic curve with finitely many $\Q$-anomalous primes $p$, then we say $E$ is \emph{$\Q$-Ridgdill}.
Let $K$ be an imaginary quadratic field.
If $E$ has finitely many $K$-anomalous primes $p$, we say that $E$ is \emph{$K$-Ridgdill}.
\end{definition}
Note that by Lemma \ref{lemma46}, a $K$-Ridgdill curve is necessarily $\Q$-Ridgdill. The following is a corollary to Theorem \ref{th46}.

\begin{corollary}
Let $E_{/\Q}$ be an elliptic curve and $K$ be an imaginary quadratic field so that the conditions of Theorem \ref{th46} are satisfied.
Assume that $E$ is $\Q$-Ridgdill.
Then there are only finitely many primes $p$ which split in $K$ such that
\begin{enumerate}
\item $E$ has good ordinary reduction at $p$,
\item $\Selp\neq 0$.
\end{enumerate}
Furthermore, if $E$ is $K$-Ridgdill, then there are only finitely many primes $p$ (split and inert in $K$) such that the above conditions are satisfied.
\end{corollary}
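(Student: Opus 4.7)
The plan is to deduce this directly from Theorem \ref{th46}, which already does the heavy lifting. Since the hypotheses of Theorem \ref{th46} are in force, for every prime $p$ of good ordinary reduction for $E$ with $p \notin \mathfrak{M}$, the dichotomy established there says that $\Selp \neq 0$ forces $p$ to be $K$-anomalous. Because $\mathfrak{M}$ is a finite set, it therefore suffices to control the $K$-anomalous primes of the appropriate type.

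For the first assertion, I would specialize to primes $p$ that split in $K$. In this case each prime $\mathfrak{p} \mid p$ of $K$ has residue field $\F_p$, so Lemma \ref{lemma46}(1) gives that $p$ is $K$-anomalous if and only if $a_p(E) = 1$, i.e.\ if and only if $p$ is $\Q$-anomalous. The $\Q$-Ridgdill hypothesis tells us there are only finitely many such primes, and combining this finite set with $\mathfrak{M}$ yields the first statement.

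For the second assertion, drop the splitting hypothesis and instead assume that $E$ is $K$-Ridgdill. By the very definition of $K$-Ridgdill, the set of $K$-anomalous primes for $E$ is finite, whether they split, ramify, or are inert in $K$. Invoking Theorem \ref{th46} once more, the set of primes $p$ of good ordinary reduction with $\Selp \neq 0$ is contained in the union of $\mathfrak{M}$ with the set of $K$-anomalous primes, which is finite.

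The argument is essentially bookkeeping: no step is a serious obstacle because Theorem \ref{th46} and Lemma \ref{lemma46} have already reduced the problem to counting anomalous primes, and the Ridgdill hypotheses are designed precisely to make that count finite. The only minor point worth verifying explicitly is the equivalence of $\Q$- and $K$-anomalous in the split case, which is immediate from the residue field computation underlying Lemma \ref{lemma46}(1).
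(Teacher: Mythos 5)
Your argument is correct and is exactly the deduction the paper intends (the corollary is stated without proof as an immediate consequence of Theorem \ref{th46}): outside the finite set $\mathfrak{M}$, nonvanishing of $\Selp$ forces $p$ to be $K$-anomalous, and the split case reduces to $\Q$-anomalous primes via the residue-field identification. The only cosmetic remark is that for split $p$ the equivalence of $K$-anomalous and $\Q$-anomalous follows directly from the definition (the residue field at each $\mathfrak{p}\mid p$ is $\F_p$), without needing to pass through $a_p=1$ and the $p\geq 7$ restriction of Lemma \ref{lemma46}, though that detour only discards finitely many primes and so is harmless.
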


Let $N_E$ denote the conductor of $E$ and $N$ be some fixed positive integer.
Ridgdill observes that if $E$ is an elliptic curve such that the image of
\[\barrho_{E,N}:\Gal(\widebar{\Q}/\Q)\rightarrow \GL_2(\Z/N \Z)\] is contained in a subgroup $H$ of $\GL_2(\Z/N\Z)$ containing no element whose trace is $1$, then all primes $p\nmid N_E N$ are not anomalous.
Hence, in this case, the elliptic curve $E$ is finitely anomalous.
The subgroups which do not contain any such element with trace $1$ are classified, and elliptic curves whose mod-$N$ representation is contained in these subgroups are classified.

It was first observed in \cite[Lemma 8.18]{Maz72} that there are many examples of $\Q$-Ridgdill curves.
We provide a (different) proof for the sake of completeness.

\begin{proposition}\label{proposition53}
Let $E$ be an elliptic curve over $\Q$ such that $E(\Q)_{\tor}\neq 0$.
Then $E$ is $\Q$-Ridgdill.
\end{proposition}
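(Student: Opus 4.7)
The plan is to combine the classical injectivity of reduction on torsion with the Hasse bound in order to force any $\Q$-anomalous prime to coincide with a fixed prime divisor of $\#E(\Q)_{\tor}$, giving at most one such prime.

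First I would pick a prime $\ell$ dividing $\#E(\Q)_{\tor}$, so that $E(\Q)[\ell]$ is a nontrivial subgroup of $E(\Q)$. Let $p$ be any prime of good reduction with $p\ne \ell$. A standard result (see for instance Proposition VII.3.1 of Silverman's book on elliptic curves) asserts that the reduction map $E(\Q)_{\tor}\to \widetilde{E}(\F_p)$ is injective on prime-to-$p$ torsion, so in particular $E(\Q)[\ell]$ embeds into $\widetilde{E}(\F_p)$, and hence $\ell \mid \#\widetilde{E}(\F_p)$.

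Next I would impose the extra assumption that $p$ is $\Q$-anomalous, i.e.\ $\widetilde{E}(\F_p)[p]\neq 0$. This forces $p\mid \#\widetilde{E}(\F_p)$, and by the Hasse bound $\#\widetilde{E}(\F_p)$ lies in the interval $(p+1-2\sqrt{p},\,p+1+2\sqrt{p})$, which for $p\geq 5$ is strictly contained in $(0,2p)$; consequently the only multiple of $p$ in that range is $p$ itself, giving $\#\widetilde{E}(\F_p)=p$. Combining with $\ell\mid \#\widetilde{E}(\F_p)$ yields $\ell\mid p$, and since $\ell$ and $p$ are primes this means $\ell=p$, contradicting $p\neq \ell$.

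Therefore the only prime that can be $\Q$-anomalous is $\ell$ itself (together with possibly finitely many primes $p\leq 3$, which we would handle separately by direct inspection). In particular, $E$ has at most finitely many $\Q$-anomalous primes, so $E$ is $\Q$-Ridgdill. The only subtlety in the argument is the injectivity of the reduction map on $\ell$-torsion, which is why we need $p\neq \ell$; everything else is an immediate consequence of the Hasse bound. No substantial obstacle is anticipated.
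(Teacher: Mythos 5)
Your proof is correct, and its endgame is identical to the paper's: once one knows $\ell\mid\#\widetilde{E}(\F_p)$, the Hasse bound forces an anomalous prime $p$ to satisfy $\#\widetilde{E}(\F_p)=p$, which is incompatible with divisibility by $\ell\neq p$. The only genuine difference is how the key divisibility $\ell\mid\#\widetilde{E}(\F_p)$ is obtained. You invoke the classical injectivity of the reduction map $E(\Q)_{\tor}\to\widetilde{E}(\F_p)$ on prime-to-$p$ torsion, so that $E(\Q)[\ell]$ embeds in $\widetilde{E}(\F_p)$. The paper instead observes that $E(\Q)[\ell]\neq 0$ forces the mod-$\ell$ representation into the Borel shape $\left(\begin{smallmatrix}1&\ast\\0&\chi_\ell\end{smallmatrix}\right)$ and computes $\#\widetilde{E}(\F_p)\equiv 1+p-\tr\rho_E(\Frob_p)\equiv 0\pmod\ell$ for $p\nmid \ell N_E$. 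Your route is more elementary (no Galois representations needed) and arguably cleaner; the paper's route is the one that generalizes to the $K$-Ridgdill statement that follows it, where one must also treat inert primes with residue field $\F_{p^2}$ and the reduction-map argument over $\Q$ no longer suffices on its own.

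One small numerical slip: the Hasse interval $(p+1-2\sqrt p,\,p+1+2\sqrt p)$ is \emph{not} contained in $(0,2p)$ for $p=5$, since $6+2\sqrt5>10$; indeed $\#\widetilde{E}(\F_5)=10$ is possible. The clean threshold is $p\geq 7$ (as in the paper), so your ``handle $p\leq 3$ separately'' should read ``handle $p\leq 5$ separately.'' This affects only finitely many primes and so does not damage the conclusion.
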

\begin{proof}
Let $\ell$ be a prime such that $E(\Q)[\ell]\neq 0$ and let $N_E$ be the conductor of $E$.
Denote by $\chi_\ell$ the mod-$\ell$ cyclotomic character.
Since $E(\Q)[\ell]\neq 0$, the mod-$\ell$ representation is of the form $\barrho_{E,\ell}=\mtx{1}{\ast}{0}{\chi_{\ell}}$.
Let ${\Frob}_p$ be the Frobenius at a prime $p\nmid \ell N_E$.
We claim that $p$ is not $\Q$-anomalous.
Indeed, we find that
\[
1+p-\#\widetilde{E}(\F_p)\equiv \tr\left(\rho_E({\Frob}_p)\right)\equiv 1+\chi(p)=1+p\pmod{\ell}.
\]
Therefore, $\ell$ divides $\# \widetilde{E}(\F_p)$.
Assume in addition to $p\nmid \ell N_E$ that $p\geq 7$.
It follows from the Hasse bound that $p| \#\widetilde{E}(\F_p)$ if and only if $\#\widetilde{E}(\F_p)=p$.
Since $\ell | \# \widetilde{E}(\F_p)$, we see that $p$ is not $K$-anomalous.
The only primes that can possibly be $\Q$-anomalous are the primes dividing $\ell N_E$.
Hence, $E$ is $\Q$-Ridgdill.
\end{proof}

In \cite[Lemma A.5]{MR05}, B.~Mazur--K.~Rubin provided sufficient conditions for an elliptic curve to be $K$-Ridgdill.
Here, we construct a different class of examples of $K$-Ridgdill curves.

\begin{proposition}
Let $E_{/\Q}$ be an elliptic curve with conductor $N_E$ and suppose that $K=\Q(\sqrt{-d})$ is an imaginary quadratic field.
Assume that there is a prime $\ell$ such that $E(\Q)[\ell]\neq 0$.
Then $E$ is $K$-Ridgdill in the following situations
\begin{enumerate}
 \item $\ell=2$,
 \item $\ell=3$ and $K=\Q(\sqrt{-3})$.
\end{enumerate}
Furthermore, if either of the above conditions are satisfied and $p\geq 7$ is a prime coprime to $N_E d \ell$, then $p$ is not $K$-anomalous.
\end{proposition}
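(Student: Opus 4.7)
The plan is to mirror the strategy used in Proposition \ref{proposition53}, combining the reducibility of the mod-$\ell$ representation $\barrho_{E,\ell}$ with the splitting dichotomy supplied by Lemma \ref{lemma46}. Since $E(\Q)[\ell]\neq 0$, the mod-$\ell$ representation sits in a short exact sequence with trivial sub and cyclotomic quotient, so that
\[
\barrho_{E,\ell} \simeq \mtx{1}{\ast}{0}{\chi_\ell}.
\]
Taking the trace of Frobenius at any prime $p\nmid \ell N_E$ then yields the key congruence
\[
a_p \equiv 1+p \pmod{\ell}.
\]

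For case (1), I would take $\ell=2$ and any prime $p\geq 7$ coprime to $N_E d$. Then $p$ is odd, so the congruence gives $a_p\equiv 0\pmod 2$, i.e. $a_p$ is even. By Lemma \ref{lemma46}, $K$-anomaly at $p$ would force $a_p\in\{\pm 1\}$, which is odd: a contradiction. Hence no such $p$ can be $K$-anomalous.

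For case (2), I would take $\ell=3$ and $K=\Q(\sqrt{-3})$, and split according to the residue class of $p$ modulo $3$. Since $K=\Q(\zeta_3)$ has discriminant $-3$, for $p\neq 3$ the prime $p$ splits in $K$ precisely when $p\equiv 1\pmod 3$ and is inert precisely when $p\equiv 2\pmod 3$. If $p\equiv 1\pmod 3$ the congruence gives $a_p\equiv 2\pmod 3$, hence $a_p\neq 1$, and the split clause of Lemma \ref{lemma46} rules out $K$-anomaly. If $p\equiv 2\pmod 3$ the congruence gives $a_p\equiv 0\pmod 3$, hence $a_p\neq \pm 1$, and the inert clause of Lemma \ref{lemma46} again rules out $K$-anomaly.

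In either situation, the set of potentially $K$-anomalous primes is contained in the finite set consisting of primes below $7$ together with those dividing $N_E d$, so $E$ is $K$-Ridgdill. The argument is essentially mechanical once the reducibility congruence is in hand; the only point requiring care is matching the quadratic splitting law for $\Q(\sqrt{-3})$ against the residue of $a_p$ modulo $3$, and confirming that the ramified prime $p=3$ is safely excluded by the hypothesis $p\geq 7$ (and by requiring $p\nmid d$).
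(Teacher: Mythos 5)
Your proof is correct and takes essentially the same route as the paper's: both rest on the trace congruence $a_p\equiv 1+p\pmod{\ell}$ coming from the reducible mod-$\ell$ representation, the Hasse-bound characterization of $K$-anomalous primes (Lemma \ref{lemma46}), and the splitting law for $p$ in $K$. The only difference is organizational --- you argue directly from the residue of $p$ modulo $3$ to the splitting type and then rule out anomaly via Lemma \ref{lemma46}, whereas the paper assumes anomaly, derives $a_p=-1$ in the inert case, and contradicts the splitting law --- but the mathematical content is identical.
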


\begin{proof}
Let $p\geq 7$ be a prime which is coprime to $N_E d\ell$ and assume that $p$ is $K$-anomalous.
Since $p$ is unramified in $K$, there are only two cases to consider.
Recall that $p$ is split in $K$ if and only if $\left(\frac{-d}{p}\right)=1$.

\emph{Case 1}: First, consider the case when $p$ is split in $K$.
Let $v|p$ be a prime of $K$.
In this case, the residue field $k_v$ of $K$ at $v$ is $\F_p$.
Let $\chi$ denote the $\ell$-adic cyclotomic character and $\chi_{\ell}$ be its mod-$\ell$ reduction.
Since $E(\Q)[\ell]\neq 0$, the argument from the proof of Proposition \ref{proposition53} shows that $\ell$ divides $\# \widetilde{E}(\F_p)$.
Next, it follows from the Hasse bound that $p| \#\widetilde{E}(\F_p)$ if and only if $\#\widetilde{E}(\F_p)=p$.
This contradicts our earlier conclusion that $\ell | \# \widetilde{E}(\F_p)$.
Hence, $p$ is not $K$-anomalous.

\emph{Case 2:} Next, suppose $p$ is inert in $K$. The same arguments (as in \emph{Case 1}) show that $\ell$ divides $\#\widetilde{E}(\F_p)=1+p-a_p$. Furthermore, since $p$ is $K$-anomalous, by Lemma~\ref{lemma46}(2) we have $a_p=\pm 1$. However,
since $\ell \neq p$, it once again follows from the Hasse bound $p\nmid \#\widetilde{E}(\F_p)$, and so we must have $a_p=-1$.
Therefore $p\equiv -2\pmod{\ell}$.

Suppose that $\ell=2$.
Since $p$ is not even, it is not $K$-anomalous.
Therefore, $E$ is $K$-Ridgdill if $E(\Q)[2]\neq 0$.
Next consider the case when $\ell=3$ and $K=\Q(\sqrt{-3})$.
We have that
\[\left(\frac{-3}{p}\right)=\left(\frac{-1}{p}\right)\left(\frac{3}{p}\right)=(-1)^{\frac{p-1}{2}}(-1)^{\frac{p-1}{2}\frac{3-1}{2}}\left(\frac{p}{3}\right)=\left(\frac{p}{3}\right)=\left(\frac{-2}{3}\right)=\left(\frac{1}{3}\right)=1.\]
However, since $p$ is assumed to be inert in $K$, it is not $K$-anomalous.
Therefore, $E$ is $K$-Ridgdill.
\end{proof}


\section{Results for varying imaginary quadratic number field \texorpdfstring{$K$}{}: Vanishing of the \texorpdfstring{$\mu$}{}-invariant}
\label{section: vary K vanish mu}

In this section, we fix an elliptic curve $E$ defined over $\mathbb{\Q}$ with conductor $N_E$ and an odd prime number $p$ at which $E$ has good ordinary reduction.
Let $d\neq p$ be a prime number and $K^{d}:=\Q(\sqrt{-d})$.
Denote by $\Kellinf$ the anticyclotomic $\Z_p$-extension of $K^{d}$ and recall that $\Selpell$ is the Selmer group over $\Kellinf$.

\begin{definition}
Let $\mathcal{S}$ be a subset of prime numbers.
Define the \emph{density of $\mathcal{S}$} as
\[
\delta(\mathcal{S}):=\lim_{x\rightarrow \infty} \frac{\#\{\ell\leq x| \ell\in \mathcal{S}\}}{\pi(x)},
\]
where $\pi(x)$ is the prime counting function.
\end{definition}
\noindent As $d$ varies over all prime numbers we study the following question
\begin{question}What is the density of primes $d$ such that $\Selpell^{\vee}$ is a torsion $\Lambda$-module with $\mu$-invariant equal to $0$?
\end{question}
We do not make any assumption on the rank of $E(\Q)$ and prove estimates for how often $\Selpell^{\vee}$ satisfies the conditions (CR) introduced in Hypothesis~\ref{ref hyp CR}.
For simplicity we will assume that $N_E$ is odd throughout the remainder of this section.

\begin{theorem}\label{th62}
Let $E_{/\Q}$ be a fixed elliptic curve.
Let $p$ be an odd prime number at which $E$ has good ordinary reduction and set $a_p:=1+p-\#\widetilde{E}(\F_p)$.
Additionally, assume that the following conditions are satisfied.
\begin{enumerate}[(i)]
\item The residual representation
\[
\barrho_{E,p}:\Gal(\widebar{\Q}/\Q)\rightarrow \GL_2(\F_p)
\]
is surjective.
\item There is a prime $q$ dividing the conductor $N_E$ of $E$ such that at least one of the following conditions is satisfied
\begin{enumerate}
\item $q\not \equiv \pm 1\pmod{p}$,
\item $\bar{\rho}_{E,p}$ is ramified at $q$.
\end{enumerate}
\item The conductor $N_E$ of $E$ is square-free,
\item $a_p\not \equiv \pm 1\pmod{p}$.
\end{enumerate}
Let $\mathcal{S}$ be the set of primes $d$ such that the conditions (CR) are satisfied for $(E, p, K^d)$.
Let $k$ be the number of primes $q| N_E$ such that both of the following conditions hold
\begin{enumerate}
 \item $q \equiv \pm 1\pmod{p}$,
 \item $\bar{\rho}_{E,p}$ is unramified at $q$.
\end{enumerate}
Then the density $\delta(\mathcal{S})=\frac{1}{2^{k+1}}$.
In particular, the proportion of primes $d$ for which $\Selpell$ is cotorsion as a $\Lambda$-module with $\mu$-invariant equal to zero is $\frac{1}{2^{k+1}}$.
\end{theorem}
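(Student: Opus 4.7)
\textbf{Proof plan for Theorem \ref{th62}.}
The plan is to decode the four conditions in (CR) for the triple $(E,p,K^{d})$ into conditions on the prime $d$, and to observe that hypotheses (i)--(iv) of the theorem render three of them automatic, leaving only conditions on how the rational primes dividing $N_E$ split in $K^{d}=\Q(\sqrt{-d})$. First I would note that condition (1) of (CR) is exactly hypothesis (i); that $N^-$ is square-free because it divides the square-free integer $N_E$ by hypothesis (iii); and that condition (4) is hypothesis (iv). Thus only condition (2), together with the parity requirement in condition (3), depends on $d$, and the density calculation reduces to these.

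Write $N_E=q_1\cdots q_r$. For a prime $d$ coprime to $pN_E$, each $q_i$ is split or inert in $K^{d}$ according as $\left(\tfrac{-d}{q_i}\right)=+1$ or $-1$, and $N^-$ is the product of those $q_i$ that are inert. Call $q_i$ \emph{bad} if $q_i\equiv \pm1\pmod{p}$ and $\barrho_{E,p}$ is unramified at $q_i$, and \emph{good} otherwise; by hypothesis (ii) there is at least one good prime, so $r-k\geq 1$. The remaining (CR) constraints then translate into two conditions on $d$: (a) every bad prime splits in $K^{d}$, preserving condition (2); and (b) an odd number of good primes is inert in $K^{d}$, so that $N^-$ has an odd number of prime divisors, giving condition (3).

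The splitting type of each $q_i$ in $K^{d}$ depends only on the residue class of $d$ modulo $4q_i$. By the Chinese remainder theorem and Dirichlet's theorem on primes in arithmetic progressions, as $d$ varies over primes the splitting types of $q_1,\dots,q_r$ in $K^{d}$ are jointly equidistributed, each $q_i$ being split or inert with density $1/2$ independently of the others. Counting yields: constraint (a) has density $2^{-k}$, and, conditionally on (a), constraint (b) selects exactly $2^{r-k-1}$ of the $2^{r-k}$ possible splitting patterns among the good primes (this is where we use $r-k\geq 1$). Hence the joint density of primes $d$ satisfying all (CR) conditions is $2^{-k}\cdot 2^{-1}=2^{-(k+1)}$, establishing $\delta(\mathcal S)=1/2^{k+1}$.

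For the final assertion, once $d\in \mathcal S$ the results of Vatsal, Bertolini--Darmon, and Pollack--Weston quoted in the text force $\Selpell^{\vee}$ to be $\Lambda$-torsion with vanishing $\mu$-invariant, so the proportion in question coincides with $\delta(\mathcal S)$. The only delicate point is the independence statement in the density computation; but because the conductors $4q_i$ share only the factor $4$, it amounts to a single application of Dirichlet's theorem on reduced residue classes modulo $4N_E$ and is not a serious obstacle.
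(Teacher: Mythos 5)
Your proposal is correct and follows essentially the same route as the paper: reduce (CR) to splitting conditions on the primes dividing $N_E$ in $K^d$, use Dirichlet's theorem to equidistribute the splitting types (the paper phrases this as counting residue classes modulo $N_E$ via quadratic residues, you phrase it as joint independence modulo $4N_E$), and then perform the same combinatorial count of odd subsets of the $\omega(N_E)-k$ ``good'' primes, using hypothesis (ii) to ensure there is at least one. The only cosmetic difference is that the paper sums the densities $\delta_\Omega = 2^{-\omega(N_E)}$ over the $2^{\omega(N_E)-k-1}$ admissible subsets $\Omega$, whereas you factor the count as $2^{-k}\cdot\tfrac12$ directly.
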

Before commencing with the proof of the above result, we introduce some notation.
Let $r_1, \ldots, r_t$ be distinct prime numbers dividing $N_{E}$.
Denote this set of primes by $\Pi(N_E)$.
Here, $t=\omega(N_E)$ is the number of distinct prime factors of $N_E$.
Enumerate this set as follows: for $i\leq k$, let $r_i$ be the primes for which both the conditions are satisfied
\begin{enumerate}
\item $r_i \equiv \pm 1\pmod{p}$,
\item $\bar{\rho}_{E,p}$ is unramified at $r_i$.
\end{enumerate}
Assumption \emph{(ii)} implies that $k<\omega(N_E)$.
Let $d\nmid N_E$ be a prime number.
Write $N_E=N^+_{E,d} N^-_{E,d}$, such that $N^+_{E,d}$ (resp. $N^-_{E,d}$) is the product of primes that split (resp. are inert) in $K^{d}=\Q(\sqrt{-d})$.

Let $\Omega$ be any subset of $\Pi(N_E)$ and define $\pi_{\Omega}$ to be the set of primes $d\nmid N_E$ such that the primes in $\Omega$ (resp. $\Pi(N_E)\setminus \Omega$) are inert (resp. split) in $K^{d}$.
In other words, $\pi_{\Omega}$ consists of primes $d$ such that the primes $\Pi(N_E)$ are unramified in $K^d$ and $N_{E,d}^-=\prod_{q\in\Omega} q$.
For $x>0$, set $\pi_{\Omega}(x):=\#\{d\leq x| d\in \pi_{\Omega}\}$.
Denote by $\delta_{\Omega}$ the density of $\pi_{\Omega}$, defined as follows
\[
\delta_{\Omega}:=\delta(\pi_{\Omega})=\lim_{x\rightarrow \infty} \frac{\pi_{\Omega}(x)}{\pi(x)}.
\]
The next result shows that the set of primes $\pi_{\Omega}$ is determined by residue classes modulo $N_E$.
Let $(\Z/N_E \Z)^*$ denote the ring of units in $\Z/N_E \Z$.
Recall that $\varphi(N_E)=\prod_i (r_i-1)$ is the cardinality of $(\Z/N_E \Z)^*$.

\begin{lemma}
\label{residueclasses}
There is a collection of residue classes $\mathfrak{r}_{\Omega}\subset (\Z/N_E \Z)^*$ such that a prime $d\nmid N_E$ is contained in $\pi_{\Omega}$ if and only if the reduction of $-d$ modulo $N_E$ is contained in $\mathfrak{r}_{\Omega}$.
Furthermore, the cardinality of $\mathfrak{r}_{\Omega}$ is given by
\[
\# r_{\Omega}=\frac{\varphi(N_E)}{2^{\omega(N_E)}}.
\]
\end{lemma}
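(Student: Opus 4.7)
The plan is to translate the splitting conditions defining $\pi_\Omega$ into a congruence condition on $-d$ modulo $N_E$ via Dedekind--Kummer/quadratic reciprocity, and then enumerate the satisfying classes using the Chinese Remainder Theorem. The core input is elementary: the splitting type of a rational prime $q$ in $\Q(\sqrt{-d})$ is determined by a congruence on $-d$ modulo (a small multiple of) $q$.

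First, for each odd prime $q \in \Pi(N_E)$ and any prime $d \nmid N_E$, square-freeness of $N_E$ forces $q \neq d$, so $q$ is unramified in $K^d = \Q(\sqrt{-d})$; Dedekind--Kummer then gives that $q$ splits in $K^d$ iff $\bigl(\tfrac{-d}{q}\bigr) = +1$ and $q$ is inert iff $\bigl(\tfrac{-d}{q}\bigr) = -1$. Thus whether $q$ behaves as an element of $\Omega$ (inert) or of $\Pi(N_E)\setminus\Omega$ (split) depends only on $-d \bmod q$. I would then define
\[
\mathfrak{r}_\Omega \;:=\; \Bigl\{ \bar a \in (\Z/N_E\Z)^* \;:\; \bigl(\tfrac{a}{q}\bigr) = -1 \text{ for all } q \in \Omega,\; \bigl(\tfrac{a}{q}\bigr) = +1 \text{ for all } q \in \Pi(N_E)\setminus\Omega \Bigr\},
\]
and observe, via the CRT isomorphism $(\Z/N_E\Z)^* \cong \prod_{q \mid N_E}(\Z/q\Z)^*$, that $d \in \pi_\Omega$ if and only if $\overline{-d} \in \mathfrak{r}_\Omega$.

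For the cardinality, each factor $(\Z/q\Z)^*$ contains exactly $(q-1)/2$ quadratic residues and $(q-1)/2$ non-residues, so by independence of the conditions at distinct primes,
\[
\#\mathfrak{r}_\Omega \;=\; \prod_{q \mid N_E} \tfrac{q-1}{2} \;=\; \frac{\varphi(N_E)}{2^{\omega(N_E)}},
\]
as claimed. The only mildly delicate point is the prime $q = 2$ if $2 \mid N_E$: at $q = 2$ the split/inert dichotomy is governed by $-d \bmod 8$ rather than $\bmod 2$, so strictly speaking one enlarges the modulus to $\operatorname{lcm}(N_E, 8)$ and argues identically, using that $(\Z/8\Z)^*$ has two residues ($1 \bmod 8$) giving split and two ($5 \bmod 8$) giving inert for $q = 2$. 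This preserves the factor $(q-1)/2 = 1/2 \cdot |(\Z/8\Z)^*|/2$ in the product and hence the final count. I expect this to be the only wrinkle; everything else is a routine application of CRT and quadratic reciprocity.
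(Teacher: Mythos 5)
Your proof is essentially the same as the paper's: both translate the splitting conditions into quadratic-residue conditions on $-d$ modulo each prime divisor of $N_E$, define $\mathfrak{r}_\Omega$ as the corresponding product of residue/non-residue sets under the CRT isomorphism $(\Z/N_E\Z)^*\cong\prod_q(\Z/q\Z)^*$, and count $(q-1)/2$ classes per factor. Your remark about $q=2$ is a genuine (and correct) refinement: the paper's proof tacitly treats every $q\mid N_E$ as odd, and when $2\mid N_E$ the modulus must indeed be enlarged to $\operatorname{lcm}(N_E,8)$ as you do, which preserves the density used in Theorem \ref{th62} even though the literal cardinality formula needs adjusting in that case.
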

\begin{proof}
The prime $d\nmid N_E$ is contained in $\pi_{\Omega}$ if and only if $-d$ is not a quadratic residue modulo $q$ for all primes $q\in \Omega$ and it is a quadratic residue modulo $q$ for all primes $q\in \Pi(N_E)\setminus \Omega$.
The number of quadratic residues in $(\Z/q\Z)^*$ is equal to the number of quadratic non-residues.
For $q\in \Omega$ (resp. $q\notin \Omega$) denote by $\mathfrak{r}_{\Omega}^{q}\subset (\Z/q\Z)^*$ the set of quadratic non-residues (resp. residues).
Define
\[
\mathfrak{r}_{\Omega}:=\prod_{q\in \Pi(N_E)} \mathfrak{r}_{\Omega}^q\subset \prod_{q\in \Pi(N_E)} (\Z/q\Z)^*=(\Z/N_E \Z)^*.
\]
Note that $d\in \pi_{\Omega}$ if and only if the mod-$N_E$ reduction $-\bar{d}\in \mathfrak{r}_{\Omega}$.
Since $\#\mathfrak{r}_{\Omega}^q=\frac{q-1}{2}$, we have that
\[
\#\mathfrak{r}_{\Omega}=\prod_{i=1}^t \frac{r_i-1}{2}=\frac{\varphi(N_E)}{2^{\omega(N_E)}}.\]
\end{proof}

\noindent We are now in a position to prove Theorem $\ref{th62}$.
\begin{proof}[Proof of Theorem $\ref{th62}$]
Recall that in order for (CR) to be satisfied, it is required that $N^-_{E,d}$ is a product of an odd number of primes and none of the primes $r_i$ for $i\leq k$ divide $N^-_{E,d}$.
Let $\Omega$ be any subset of $\{r_{k+1}, \dots, r_t\}$.
The set $\pi_{\Omega}$ consists of the primes $d$ such that $N^-_{E,d}=\prod_{q\in \Omega} q$.
Let $\mathcal{N}$ be the collection of subsets $\Omega\subseteq \{r_{k+1}, \dots, r_t\} $ for which $\# \Omega$ is odd, and set $\mathcal{S}':=\sqcup_{\Omega\in \mathcal{N}} \pi_{\Omega}$.
The sets $\mathcal{S}$ and $\mathcal{S}'$ differ by a finite set of primes.
Lemma $\ref{residueclasses}$ asserts that $\pi_{\Omega}$ is determined by $\frac{\varphi(N_E)}{2^{\omega(N_E)}}$ residue classes modulo $N_E$.
Therefore, by Dirichlet's theorem, each residue class contributes a density of $\frac{1}{\varphi(N_E)}$.
We find that
\[
\delta_{\Omega}=\frac{1}{\varphi(N_E)}\frac{\varphi(N_E)}{2^{\omega(N_E)}}=\frac{1}{2^{\omega(N_E)}}.
\]
Therefore, $\delta(\mathcal{S})=\frac{\#\mathcal{N}}{2^{\omega(N_E)}}$.

It remains to calculate $\#\mathcal{N}$, i.e., the number of odd subsets of $\{r_{k+1}, \dots, r_{\omega(N_E)}\}$.
It is an exercise in combinatorics that $\# \mathcal{N}=2^{\omega(N_E)-k-1}$, see \cite[Exercise 1.1.13]{soberon2013}.
The claim follows.
\end{proof}

\begin{remark}
If $N_E$ is even and square-free, then we need to be careful when working with $q=2$.
The ramification behavior of 2 in $\Q(\sqrt{-d})$ is not detected by a congruence condition on $d \pmod{2}$ and instead we need to consider the residue class of $d \pmod{8}$.
\end{remark}

In fact, the set $\delta(\mathcal{S})$ can be computed explicitly in any specific example using mostly elementary methods.
This is illustrated in the example below.
\begin{example}
Let $E_{/\Q}$ be the curve with Cremona label \href{https://www.lmfdb.org/EllipticCurve/Q/11a1/}{\tt 11a1}, and let us take $p=7$.
In the notation of the theorem, we have
\begin{itemize}
\item $N_E=11$ is square-free
\item $\bar{\rho}_{E,p}$ is surjective
\item $a_p(E)=-2 \not\equiv \pm 1 \pmod p$
\item $q =11 \not\equiv \pm 1 \pmod p$
\end{itemize}
Thus, the hypotheses of Theorem \ref{th62} are satisfied with $k=0$, and so we conclude that $\delta(\mathcal{S})=\frac{1}{2}$.
In particular, since $N=11$ is prime, we have
\begin{align*}
\mathcal{S} &= \{ \text{primes}\ d \ \vert \ (E,p,K^d) \ \text{satisfies (CR)} \}\\
&=\{ \text{primes}\ d \ \vert 11 \ \text{is inert in}\ K^d / \Q \} \\
&=\left\{ \text{primes}\ d \ \vert \left( \frac{11}{d}\right)=-1 \right\}.
\end{align*}
Some calculations with quadratic reciprocity and the Chinese Remainder Theorem show that
\[
\left( \frac{11}{d}\right)=-1 \Leftrightarrow d \equiv 3, 13, 15, 17, 21, 23, 27, 29, 31, \text{ or } 41 \pmod{44}.
\]
This description gives 10 possible congruence classes for $d$.
We claim that this agrees with our calculation $\delta(\mathcal{S})=\frac{1}{2}$.
First note that any odd prime falls into an odd congruence class modulo $44$, cutting our space to $22$ eligible congruence classes.
But a prime $d$ cannot be congruent to $11$ or $33$ either, so the number of eligible congruence classes for $d$ is just $20=\varphi(44)$, justifying our claim.
\end{example}

\section{Results for varying imaginary quadratic number field \texorpdfstring{$K$}{}: the rank zero case}
\label{section: vary K rank 0}

Let $E_{/\Q}$ be a rank 0 elliptic curve of conductor $N_E$ without complex multiplication.
We fix an odd prime number $p$ at which $E$ has good ordinary reduction.
Let $d\nmid N_E$ be a prime number and $K^d = \Q(\sqrt{-d})$ be an imaginary quadratic field.
We assume throughout that the Tate--Shafarevich group $\Sh(E/K^d)$ is finite.

Fix a pair $(E,p)$ and let $d$ vary over the primes coprime to $N_E$.
If the Mordell--Weil rank of $E(K^d)$ is zero\footnote{In view of Goldfeld's Conjecture, we expect that this happens 50\% of the time.}, then $\Sel_{p^{\infty}}(E/K^d)$ is finite and the Euler characteristic $\chi_d:=\chi(K_{\infty}^d/K^d, E[p^{\infty}])$ is defined.
It satisfies the relation
\[
\chi_d\sim \frac{\left(\alpha_p^{(d)}\right)^2\times \left(\Sh_p^{(d)}\times \prod_{v\in S_{\ns}^d\setminus S_p^d}\tau_v^{(d)} \right)}{\left(\# E(K^d)[p^\infty]\right)^2},
\]
where $\Sh_p^{(d)}=\Sh_p(E/K^d)$, $\alpha_v^{(d)}=\alpha_v(E/K^d)$, and $\tau_v^{(d)}=c_v^{(p)}(E/K^d)$.
Also, $S_{\ns}^d$ is the set of primes of $K^d$ that are finitely decomposed in $K_{\infty}^d$ and $S_p^d$ consists of the primes of $K^d$ above $p$.
Since $\Sel_{p^\infty}(E/K^d)$ is finite in our setting, it follows from \cite[Corollary 4.9]{Gre98_PCMS} that the Selmer group $\Selpell$ is cotorsion as a $\Lambda$-module.
By Proposition \ref{prop36}, we know that $\chi_d=1$ if and only if $\mu_p^{(d)}:=\mu_p(E/K_{\infty}^d)=0$ and $\lambda_p^{(d)}:=\lambda_p(E/K_{\infty}^d)=0$.
In this case, $\Selpell$ has finite cardinality and since it does not contain any proper finite index submodules, it must equal $0$.
As $d$ varies over all prime numbers we study the following question

\begin{question}What is the density of primes $d\nmid N_E$ such that $\rank_{\Z} E(K^d)=0$ and the following equivalent conditions are satisfied.
\begin{enumerate}
 \item $I^{(d)}=1$ for each of the invariants
 \[I^{(d)}\in\{\tau_v^{(d)},\alpha_p^{(d)}, \Sh_p^{(d)}\},\]
 \item $\chi_d=1$,
 \item $\mu_p^{(d)}=0$, and $\lambda_p^{(d)}=0$,
 \item $\Selpell=0$?
\end{enumerate}
\end{question}

In this section, we analyze the variation of the invariants $I^{(d)}$ above as $d$ ranges over all primes.
Denote by $\Xi_{E,p}(x)$, (resp. $\Sh_{E,p}(x)$) the number of primes $d\leq x$ such that the Tamagawa product $\tau_{E,p}^{(d)}:=\prod_{v\in S_{\ns}^d\setminus S_p^d} \tau_v^{(d)}$ (resp. $\Sh_p^{(d)}$) is \emph{not} equal to $1$.
We seek to understand the growth of the functions $\Xi_{E,p}(x)$ and $\Sh_{E,p}(x)$ as $x\rightarrow \infty$.
Unlike the invariants $\tau_v^{(d)}$ and $\Sh_p^{(d)}$, the quantity $\alpha_p$ is a lot more well-behaved.
While there are many known results about Tamagawa products, there is little known unconditionally regarding the function $\Sh_{E,p}$.
However, we are able to provide numerical data with regards to the growth of this function.

Let us introduce the setting in which it is most convenient to apply this method of analysis.
Let $\Q_{\infty}$ be the \emph{cyclotomic} $\Z_p$-extension of $\Q$ and $\Sel_{p^{\infty}}(E/\Q_{\infty})$ the $p$-primary Selmer group over $\Q_{\infty}$.
Since $E(\Q)$ is assumed to be finite, under the additional assumption that $\Sh(E/\Q)$ is finite, the Selmer group $\Sel_{p^{\infty}}(E/\Q)$ is finite and the associated Euler characteristic is defined.
Let $\Gamma_{\Q}:=\Gal(\Q_{\infty}/\Q)$ and set
\[
\chi(\Gamma_{\Q}, E[p^{\infty}]):=\chi\left(\Gamma_{\Q}, \Sel_{p^{\infty}}(E/\Q_{\infty})\right).
\]
The Euler characteristic formula is given by (see \cite[chapter 3]{CS00_GCEC}
\begin{equation}
\label{eqn: ECF cyclotomic over Q}
\chi(\Gamma_{\Q}, E[p^{\infty}])\sim \frac{\alpha_p^2\times \Sh_p \times \prod_{\ell}\tau_{\ell}}{\left(\# E(\Q)[p^\infty]\right)^2}.
\end{equation}
Here, $\alpha_p, \Sh_p$ and $\tau_\ell$ are as follows:\begin{itemize}
 \item $\alpha_p:=\#\widetilde{E}(\F_p)[p]$,
 \item $\Sh_p:=\# \Sh(E/\Q)[p]$,
 \item $\tau_\ell:=c_\ell^{(p)}$.
\end{itemize}
The following conditions are known to be equivalent (see \cite[Corollary 3.6]{KR21})
\begin{enumerate}
 \item $\alpha_p=1, \Sh_p=1$, and $\tau_{\ell}=1$ for all primes $\ell$,
 \item $\chi(\Gamma_{\Q}, E[p^{\infty}])=1$,
 \item $\mu(E/\Q_{\infty})=0$ and $\lambda(E/\Q_{\infty})=0$,
 \item $\Sel_{p^{\infty}}(E/\Q_{\infty})=0$.
\end{enumerate}
Arguments in \emph{loc. cit.} show that it is reasonable to expect that the above conditions are satisfied for most\footnote{We mean that for a fixed prime $p$, the proportion of rank zero elliptic curves satisfying one (and hence all) of the above conditions depends on $p$ and as $p\to \infty$, this proportion approaches $1$.} elliptic curves of rank zero (when the prime $p$ is reasonably large).
We assume that these conditions are indeed satisfied.
\subsection{}
Throughout, $p\geq 5$ is fixed and suppressed in the notation.
For a rational prime $\ell\neq p$, set $\tau_{\ell}^{(d)}:=\prod_{v| \ell} \tau_v^{(d)}$, where the product is taken for the primes $v| \ell$.
In this section, we study the variation of the Tamagawa numbers $\tau_{\ell}^{(d)}$ as $d$ varies over all primes.
Recall the assumption that $\tau_{\ell}=1$, i.e., $p$ does not divide the Tamagawa number $c_{\ell}$.
By \cite[p. 448]{silverman2009}, $c_\ell$ is divisible by $p\geq 5$ precisely when the Kodaira type of $E$ at $\ell$ is $\textrm{I}_n$ with $p|n$.
Therefore, our assumption is equivalent to assuming that the Kodaira type of $E$ at any prime $\ell$ is \emph{not} of the type $\textrm{I}_n$ for $n$ divisible by $p$.

\begin{theorem}\label{th72}
Let $E_{/\Q}$ be an elliptic curve with good ordinary reduction at a prime number $p\geq 5$ and conductor $N_E$.
Let $\ell$ be a prime such that $\ell| N_E$ and $\tau_{\ell}=1$.
Then the following assertions hold.
\begin{enumerate}
\item If $\ell\neq 2$ then $\tau_{\ell}^{(d)}=1$ for $d\neq \ell$.
\item If $\ell=2$ then $\tau_{\ell}^{(d)}= 1$ for all primes $d$.
\end{enumerate}
Therefore, $\Xi_{E,p}(x)\leq \omega(N_E)$ for all values of $x>0$.
\end{theorem}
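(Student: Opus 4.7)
The plan is to translate $\tau_\ell^{(d)}=\prod_{v\mid\ell}\tau_v^{(d)}$ into an analysis of the local Tamagawa numbers $c_v(E/K^d_v)$ according to how $\ell$ splits in $K^d/\Q$. The key structural input is that, for $p\geq 5$, the $p$-part of a local Tamagawa number at a prime of bad reduction is nontrivial only when the Kodaira type there is $\mathrm{I}_n$ with $p\mid n$; for all other Kodaira types the geometric component group has order at most $4$.

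For part (1), fix an odd prime $\ell\mid N_E$ and a prime $d\neq\ell$. Then $\ell$ is unramified in $K^d/\Q$, so each completion $K^d_v$ is either $\Q_\ell$ (if $\ell$ splits in $K^d$, giving two primes $v\mid\ell$ that each contribute $\tau_v^{(d)}=\tau_\ell=1$) or the unramified quadratic extension of $\Q_\ell$ (if $\ell$ is inert). In the inert case, unramified base change preserves the Kodaira type. By the hypothesis $\tau_\ell=1$, this type is either non-$\mathrm{I}_n$, in which case $c_v\leq 4$ and hence $p\nmid c_v$, or $\mathrm{I}_n$ with $p\nmid n$, in which case $c_v$ divides $n$ and again $p\nmid c_v$. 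Either way $\tau_v^{(d)}=1$.

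For part (2), let $\ell=2$. When $d\equiv 3\pmod 4$ the prime $2$ is unramified in $K^d$ and the argument of (1) applies. The delicate cases are $d=2$ and odd primes $d\equiv 1\pmod 4$, where $K^d_v/\Q_2$ is a (wildly) ramified quadratic extension. Here I would verify that the condition ``Kodaira type not $\mathrm{I}_n$ with $p\mid n$'' is preserved by a case analysis: multiplicative $\mathrm{I}_n$ over $\Q_2$ becomes $\mathrm{I}_{2n}$ over $K^d_v$, and since $p\geq 5$ is odd the hypothesis $p\nmid n$ carries over; additive Kodaira types either remain additive (geometric component group of order at most $4$) or become multiplicative via Tate's uniformization, in which case one uses the relation between $v_2(j_E)$ and the minimal field over which $E$ acquires multiplicative reduction to control the resulting $m$. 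The main obstacle is this last sub-case of additive potentially multiplicative reduction, which requires tracking quadratic twists of Tate curves over wildly ramified quadratic extensions of $\Q_2$; the cleanest route is to bound $|c_v(E/K^d_v)|_p$ directly in terms of $|c_v(E/\Q_2)|_p$ using that $p\geq 5$ is coprime to $[K^d_v:\Q_2]=2$.

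For the stated bound, observe that $\tau_v^{(d)}=1$ at any prime $v$ of good reduction, so $\tau_{E,p}^{(d)}\neq 1$ forces $\tau_\ell^{(d)}\neq 1$ for some $\ell\mid N_E$. By parts (1) and (2), this forces $d=\ell$ for some odd $\ell\mid N_E$. Since there are at most $\omega(N_E)$ such $d$, one obtains $\Xi_{E,p}(x)\leq\omega(N_E)$ for every $x>0$.
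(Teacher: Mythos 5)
Your part (1) and your final counting argument are sound and in substance parallel to the paper's: the paper reads the same conclusions off Kida's tables on the change of Kodaira type under quadratic base change (\cite[Table 1]{kida03}), whereas you argue directly from the invariance of the N\'eron model under unramified base change. Both routes correctly isolate the one genuinely dangerous phenomenon for odd $\ell$, namely type $\mathrm{I}_n^*$ with $p\mid n$ acquiring type $\mathrm{I}_{2n}$ over the ramified quadratic extension, which for prime $d$ can only occur when $d=\ell$.

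The gap is in part (2), in precisely the sub-case you flag as ``the main obstacle'': additive, potentially multiplicative reduction at $2$ base-changed to a wildly ramified quadratic extension $K^d_v/\Q_2$. You do not complete this case, and the fallback you propose --- bounding $\abs{c_v(E/K^d_v)}_p$ in terms of $\abs{c_v(E/\Q_2)}_p$ ``using that $p\geq 5$ is coprime to $[K^d_v:\Q_2]=2$'' --- is not a valid principle. The $p$-part of a Tamagawa number is not controlled by the degree of the extension: for odd $\ell$ and base type $\mathrm{I}_n^*$ with $p\mid n$ one has $c_v\leq 4$ over $\Q_\ell$ but $c_v=2n$ over the ramified quadratic extension, so the $p$-part jumps from $1$ to $p^{v_p(n)}$ even though $p$ is coprime to the degree. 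This is exactly why part (1) must exclude $d=\ell$, and the same mechanism is what must be ruled out at $\ell=2$. The paper closes this case by a direct appeal to \cite[Table 3, p.~559]{kida03}, which records the new Kodaira type for every base type under a wildly ramified quadratic extension of $\Q_2$ and shows that, for $p\geq 5$, the resulting Tamagawa number is divisible by $p$ only when the base type is already $\mathrm{I}_n$ with $p\mid n$ --- which the hypothesis $\tau_2=1$ excludes. To complete your argument you would need to reproduce or cite that computation (tracking, as you suggest, the valuation of $j_E$ and the ramified quadratic twists over $\Q_2$); the coprimality of $p$ to the residue characteristic and to the degree does not suffice on its own.
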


\begin{proof}
Fix the prime $\ell$, denote by $T_{\base}$ the Kodaira symbol of $E$ over $\Q_\ell$.
In \cite{kida03}, M.~Kida studied the variation of the reduction type under a finite extension $K_v/\Q_{\ell}$.
For $p\geq 5$, the Kodaira symbol of the base change $E_{/K_v}$ is determined by $T_{\base}$ and the ramification index of $K_v/\Q_{\ell}$, as we explain.
\begin{enumerate}
\item First consider the case when $\ell \neq 2$.
Let $K:= K^d$ as $d$ varies over all primes.
As we vary over all extensions $K_v$ with $v|\ell$, we read off the ``new" Kodaira type $T_{\new}$ from \cite[Table 1, p. 556-7]{kida03}.
Recall that the ramification index of $K_v/ \Q_{\ell}$ is either 1 (when unramified) or 2 (when totally ramified).
Note that $K/\Q$ is a quadratic extension, hence if $\ell$ ramifies in $K$, it must be tamely ramified.
According to \cite[Table 1, p. 556-7]{kida03}, upon base change from $\Q_\ell$ to $K_v$, the Tamagawa number $c_{v}$ can become divisible by by $p$ in the following two cases:
\begin{enumerate}
\item If $T_{\base} = \textrm{I}_n$ such that $p|n$.
\item If $T_{\base} = \textrm{I}_n^*$ such that $p|n$ and $\ell$ is ramified in $K_v$.
\end{enumerate}
Our starting assumption was that $\tau_{\ell} = 1$.
Equivalently, $T_{\base}\neq \textrm{I}_n$ for $p|n$.
Therefore, given an elliptic curve $E$ with bad reduction at $\ell \neq 2, \ p$ and Kodaira type $\textrm{I}_n^*$, we need to count the number of $d$'s such that $\ell$ ramifies in $K^d$.
Since $d$ varies only over primes, there is exactly one such imaginary quadratic field, i.e., $K = \Q(\sqrt{-\ell})$.
\item Next, suppose that $\ell=2$.
If $\ell$ is split in $K=K^d$, then $K_v = \Q_{\ell}$, and $\tau_\ell^{(d)}=\tau_\ell=1$.
Hence, assume that $\ell$ is either inert or ramified in $K$.
When $2$ is inert (hence unramified), \cite[Table 1, p. 556-7]{kida03} asserts that $p| c_v$ (for $v|p$) precisely when $T_{\base} = \textrm{I}_n$ for some $p|n$.
Our hypothesis rules out this case.
When $2$ ramifies (wildly), we refer to \cite[Table 3, p. 559]{kida03}.
In this case, $p|c_{v}$ if and only if $T_{\base}=\textrm{I}_n$ when $p|n$.
However, our starting assumption guarantees that this does not happen.
Therefore, given an elliptic curve $E$ with bad reduction at $\ell =2$ and Kodaira type not equal to $\textrm{I}_n$ with $p|n$, we see that $\tau_{\ell}^{(d)}=1$ for all primes $d$.
\end{enumerate}
\end{proof}

\subsection{}
We fix a rank 0 elliptic curve $E_{/\Q}$ and a prime $p\geq 5$.
Assume that the Selmer group $\Sel_{p^{\infty}}(E/\Q_{\infty})=0$.
Varying over all primes $d$, we study how often does $\Sel_{p^{\infty}}(E/K_{\infty}^d)=0$.
Throughout, we impose the following assumption.
\begin{hypothesis}
For each imaginary quadratic field $K^d$, the $p$-primary part of the Tate--Shafarevich group $\Sh(E/K^d)$ is finite.
\end{hypothesis}
We are only able to prove a partial result in this effect, which shows that the question is largely dependent on how often $\Sh_p^{(d)}\neq 1$ as $d$ varies over all primes.
\begin{theorem}
\label{thm: partial result for varying K}
Let $E_{/\Q}$ be an elliptic curve with conductor $N_E$ and assume the following.
\begin{enumerate}[(i)]
\item $p\geq 5$.
\item $\rank_{\Z} E(\Q)=0$.
\item $E(\Q)[p]= 0$.
\item Write $a_p(E):=1+p-\#\widetilde{E}(\F_p)$ and suppose that $a_p(E)\not \equiv \pm 1 \pmod{p}$.
\item The following equivalent conditions are satisfied:
\begin{enumerate}
\item the $\mu$ and $\lambda$-invariants of $\Sel_{p^{\infty}}(E/\Q_{\infty})$ are zero,
\item $\Sel_{p^{\infty}}(E/\Q_{\infty})=0$.
\end{enumerate}
\end{enumerate}
Let $d\nmid N_E$ be a prime such that $\rank_{\Z} E(K^d)=0$.
Then the following conditions are equivalent
\begin{enumerate}
\item $\Sel_{p^{\infty}}(E/K_{\infty}^d)=0$,
\item $\Sh(E/K^d)[p]=0$.
\end{enumerate}
\end{theorem}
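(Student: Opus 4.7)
The plan is to compute the Euler characteristic $\chi_d := \chi(\Gamma, \Selpell)$ explicitly under the given hypotheses and apply Proposition \ref{prop36}. Since $\rank_\Z E(K^d) = 0$ and $\Sh(E/K^d)[p^\infty]$ is finite, the group $\Sel_{p^\infty}(E/K^d)$ is finite; hence by \cite[Corollary 4.9]{Gre98_PCMS} the dual Selmer group is $\Lambda$-torsion and $\chi_d$ is well-defined by Lemma \ref{lemma23}. Invoking Proposition \ref{prop36} together with the Greenberg-style no-proper-finite-index argument used at the end of the proof of Theorem \ref{th46}, $\Selpell = 0$ is equivalent to $\chi_d = 1$. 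By Theorem \ref{thm:van-ord},
\[
\chi_d \sim \frac{(\alpha_p^{(d)})^2 \cdot \Sh_p^{(d)} \cdot \prod_{v \in S_{\ns}^d \setminus S_p^d} \tau_v^{(d)}}{(\#E(K^d)[p^\infty])^2},
\]
so the goal is to trivialize every factor here other than $\Sh_p^{(d)}$.

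To show $\alpha_p^{(d)} = 1$: for $v \mid p$, one has $\#\widetilde{E}(k_v) = p + 1 - a_p$ when $k_v = \F_p$ and $\#\widetilde{E}(k_v) = (p+1)^2 - a_p^2$ when $k_v = \F_{p^2}$. In either case, hypothesis (iv) together with the Hasse bound $|a_p| < p$ (valid for $p \geq 5$) forces $p \nmid \#\widetilde{E}(k_v)$. To show $\#E(K^d)[p^\infty] = 1$: the reduction sequence at $v \mid p$ gives
\[
0 \to \widehat E(\mathfrak{m}_v)[p^\infty] \to E(K^d_v)[p^\infty] \to \widetilde E(k_v)[p^\infty],
\]
so it suffices to kill the formal group term. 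Since $E$ is ordinary at $v$, the inertia action on $\widehat E[p]$ agrees with that on $\mu_p$ (up to an unramified twist); a nontrivial $p$-torsion point in $\widehat E(\mathfrak{m}_v)$ would force $\widehat E[p] \subset E(K^d_v)$ and therefore compel the ramification index of $K^d_v/\Qp$ to be divisible by $p - 1$. Since $[K^d_v : \Qp] \leq 2 < p - 1$ for $p \geq 5$, this is impossible.

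Finally, assumption (v) together with the equivalences stated immediately after \eqref{eqn: ECF cyclotomic over Q} gives $\tau_\ell(E/\Q) = 1$ for every prime $\ell \mid N_E$. Because $d \nmid N_E$, we have $d \neq \ell$ for each such $\ell$, and Theorem \ref{th72} upgrades this to $\tau_\ell^{(d)} = 1$ for every bad prime of $E$; primes of good reduction contribute $\tau_v^{(d)} = 1$ automatically. Assembling these ingredients yields $\chi_d \sim \Sh_p^{(d)}$, and the stated equivalence follows because $\Sh(E/K^d)[p^\infty]$ is a finite $p$-group, hence trivial if and only if its $p$-torsion subgroup is. The main technical point is the control of the formal group at ramified places above $p$, and the hypothesis $p \geq 5$ is precisely what is needed to make this step go through uniformly; every other step is a clean bookkeeping of earlier results.
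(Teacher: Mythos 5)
Your proof is correct and follows essentially the same route as the paper: reduce to $\chi_d=1$ via Proposition \ref{prop36}, then trivialize $\alpha_p^{(d)}$ using $a_p\not\equiv\pm1\pmod p$ and the Tamagawa factors using Theorem \ref{th72}, leaving $\chi_d\sim\Sh_p^{(d)}$. Your extra step verifying $\#E(K^d)[p^\infty]=1$ via the formal group at $v\mid p$ is a welcome addition, as the paper leaves this denominator implicit.
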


\begin{proof}
Since $\rank_{\Z} E(\Q)=0$, it follows that $\Sh(E/\Q)$ is finite and $\Sel_{p^{\infty}}(E/\Q_{\infty})$ has well-defined Euler characteristic $\chi(\Gamma_{\Q}, E[p^{\infty}])$.
Since $\Sel_{p^{\infty}}(E/\Q_{\infty})=0$, it follows that $\chi(\Gamma_{\Q}, E[p^{\infty}])=1$.
Therefore, according to \eqref{eqn: ECF cyclotomic over Q} $\Sh_p:=\#\Sh(E/\Q)[p^{\infty}]=1$ and $\tau_\ell:=c_\ell^{(p)}=1$ for all primes $\ell\neq p$.

Recall that $\Sel_{p^{\infty}}(E/K_{\infty}^d)$ contains no proper finite-index $\Lambda$-submodules; it is finite if and only if it is $0$.
Therefore, Proposition \ref{prop36} asserts that $\Sel_{p^{\infty}}(E/K_{\infty}^d)=0$ if and only if its Euler characteristic $\chi_d:=\chi(\Gamma_{K^d}, E[p^{\infty}])$ is equal to $1$.
Since $a_p\not \equiv \pm 1\pmod{p}$, it follows that $p\nmid \#\widetilde{E}(\F_{p^2})$.
According to Theorem \ref{th72}, $\tau_\ell^{(d)}=1$ for all primes $\ell$.
It follows from \eqref{ecf}, that the following relation holds
\[
\chi_d=\#\Sh(E/K^d)[p^{\infty}].
\]
The result is now immediate.
\end{proof}

Conjecturally, one can analytically determine whether $\Sh(E/K^d)[p]=0$ for any specific example.
More precisely \cite{lmfdb}, the Birch and Swinnerton-Dyer Conjecture gives a formula for $\# \Sh(E/K^d)$ as in \eqref{eq:bsd},
the computation of which has been implemented in Magma \cite{magma}.

In Table \ref{tab:sha}, we present the results of this computation for a handful of elliptic curves defined over $\Q$ with small, square-free conductor.
The first column of the table identifies an elliptic curve by its Cremona label.
For each curve, we determined which primes $5 \leq d \leq 150$ satisfied both $(N_E,d)=1$ and $\rank_{\Z} E(K^d)=0$; this information is given in the second column of the table.
For each such $d$, we used Magma to compute $\# \Sh(E/K^d)$ as given by \eqref{eq:bsd}.
The third column gives the number of $d$ such that $\Sh(E/K^d)[p] \neq0$ for $p=2$, and so on.

While this data is not enough to allow for any conjectures, we note that (assuming BSD), the information in Table \ref{tab:sha} and the conclusion of Theorem \ref{thm: partial result for varying K} suggest, for instance, that $\Sel_{p^{\infty}}(E/K_{\infty}^d)=0$
for $E=${\href{https://www.lmfdb.org/EllipticCurve/Q/11a1/}{\tt 11a1}} and $p \geq 5$ for each of the values of $d$ we tested.

\section{Results for varying elliptic curves}
\label{section: vary elliptic curve}
In this section, we fix a prime $p\geq 5$ and an imaginary quadratic field $K$.
Recall that $K_{\infty}$ is the anticyclotomic $\Z_p$-extension of $K$ and $\Q_{\infty}$ the cyclotomic $\Z_p$-extension of $\Q$.
Set $\Gamma_\Q$ (resp. $\Gamma_K$) to denote $\Gal(\Q_{\infty}/\Q)$ (resp. $\Gal(K_{\infty}/K)$).
We study the variation of Iwasawa invariants of the Selmer groups $\Sel_{p^{\infty}}(E/\Q_{\infty})$ and $\Sel_{p^{\infty}}(E/K_{\infty})$ as $E$ ranges over all elliptic curves defined over $\Q$, with $\rank_{\mathbb{Z}}E(K)=0$, with good ordinary reduction at $p$.
Since the Mordell-Weil rank is zero, it follows that the Selmer group $\Sel_{p^\infty}(E/K_\infty)$ is a cotorsion $\Lambda$-module.
In this section, we do not impose the hypothesis (CR).
The results in this section extend those in \cite[\S4]{KR21}.

Recall that any elliptic curve $E_{/\Q}$ admits a unique Weierstrass equation
\begin{equation}\label{weier}
E:Y^2 = X^3 + AX + B
\end{equation}
where $A, B$ are integers and $\gcd(A^3 , B^2)$ is not divisible by any twelfth power.
Since $p\geq 5$, such an equation is minimal.
We order elliptic curves by height and expect that similar results shall hold when they are ordered by conductor or discriminant.
Recall that the \emph{height of} $E$ satisfying the minimal equation $\eqref{weier}$ is given by $H(E) := \max\left(\abs{A}^3, B^2\right)$.

For $x>0$, let $\mathcal{E}$ be the set of isomorphism classes of elliptic curves defined over $\Q$, and $\mathcal{E}(x)$ the number of elliptic curves $E$ defined over $\Q$ for which $H(E)\leq x$.
If $\mathcal{S}$ is a subset of $\mathcal{E}$, set $\mathcal{S}(x)=\mathcal{S}\cap \mathcal{E}(x)$.
Recall the Euler characteristic formula for the cyclotomic $\Z_p$-extension from \eqref{eqn: ECF cyclotomic over Q},
\[
\chi(\Gamma_{\Q}, E[p^{\infty}])\sim \frac{\# \Sh(E/\Q)[p^{\infty}]\times \left(\# \widetilde{E}(\F_p)\right)^2 }{\left(\# E(\Q)[p^\infty]\right)^2}\cdot \prod_{\ell} c_{\ell}^{(p)}(E/\Q).
\]
On the other hand, recall that the anticyclotomic Euler characteristic formula $\eqref{ecf}$ states that
\[
\chi(\Gamma_K, E[p^{\infty}])\sim \frac{\# \Sh(E/K)[p^{\infty}]\times \left(\prod_{\p|p}\# \widetilde{E}(\kappa_\p)\right)^2}{\left(\# E(K)[p^\infty]\right)^2}\cdot \prod_{v\in S_{\ns}\setminus S_p} c_v^{(p)}(E/K).
\]
The key observation in this section is that to analyze the variation of the Euler characteristic (and hence $\mu$ and $\lambda$-invariants) of elliptic curves, it suffices to study the average behaviour of the following quantities for fixed $p$ and varying $E\in \mathcal{E}$.
We consider the following terms
\begin{itemize}
 \item $\Sh_p(E/\Q):=\#\Sh(E/\Q)[p^{\infty}]$ and $\Sh_p(E/K) := \#\Sh(E/K)[p^{\infty}]$,
 \item $\tau_p(E/\Q):=\prod_{\ell} c_\ell^{(p)}(E/\Q)$ and $\tau_p(E/K) :=\prod_{v\in S_{\ns}\setminus S_p}c_v^{(p)}(E/K)$,
 \item $\alpha_p(E/\Q):=\#\widetilde{E}(\F_p)[p^{\infty}]$, $\alpha_p(E/K) :=\prod_{\p|p}\#\widetilde{E}(k_\p)[p^{\infty}]$.
\end{itemize}
\begin{definition}
Let $E_{/\Q}$ be an elliptic curve, we say that $E$ satisfies $(\dagger)$ if the Kodaira type at $\ell=2,3$ is not of of the form $I_n$ for some $n$ divisible by $p$.
\end{definition}
Note that an elliptic curve with good reduction at $\ell=2,3$ satisfies $(\dagger)$.
\begin{definition}
Let $\mathcal{E}_{1,F}(x)$, $\mathcal{E}_{2,F}(x)$, and $\mathcal{E}_{3,F}(x)$ be the subset of elliptic curves $E\in \mathcal{E}(x)$ satisfying $(\dagger)$, for which $p$ divides $\Sh_p(E/F)$, $\tau_p(E/F)$, and $\alpha_p(E/F)$ respectively for $F\in \{\Q,K\}$.
\end{definition}
Note that no assumptions are made on the rank of elliptic curves in $\mathcal{E}(x)$ or $\mathcal{E}_{i,F}(x)$.

\begin{definition}For $F\in\{\Q, K\}$, let $\mathcal{J}_F\subset \mathcal{E}$ consist of the elliptic curves $E$ satisfying the following conditions:
\begin{enumerate}
 \item $E$ satisfies $(\dagger)$,
 \item $\rank_{\Z} E(F)=0$,
 \item $E$ has good ordinary reduction at the primes above $p$.
\end{enumerate}
\end{definition}

\subsection{} To motivate our results for the anticyclotomic $\Z_p$-extension $K_{\infty}/K$, we briefly recall similar analysis that has been over the cyclotomic $\Z_p$-extension of $\Q$ in \cite[\S4]{KR21}.
There are a few differences in notation, primarily because we only consider the $p$-ordinary case here.
However, the results in the $p$-supersingular case (over $\Q_{\infty}$) are analogous and are contained in \emph{loc. cit.}

\begin{definition}
Let $\mathcal{Z}_{\Q}\subset \mathcal{J}_{\Q}$ be the subset of elliptic curves $E$ for which the following equivalent conditions are satisfied.
\begin{enumerate}
 \item The Selmer group $\Sel_{p^{\infty}}(E/\Q_{\infty})$ has $\mu=0$ and $\lambda=0$,
 \item The Selmer group $\Sel_{p^{\infty}}(E/\Q_{\infty})=0$,
 \item The Euler characteristic $\chi(\Gamma_\Q, E[p^{\infty}])=1$.
\end{enumerate}
Denote by $\mathcal{Z}^c_{\Q}$ its complement in $\mathcal{J}_{\Q}$.
\end{definition}
In the following result, we give an upper bound for $\limsup_{x\rightarrow \infty} \frac{\#\mathcal{Z}^c_{\Q}(x)}{\#\mathcal{E}(x)}$ depending only on $p$.
It is a direct consequence of the Euler characteristic formula \eqref{eqn: ECF cyclotomic over Q}.

\begin{lemma}
With respect to notation above, we have that
\[\limsup_{x\rightarrow \infty} \frac{\#\mathcal{Z}^c_{\Q}(x)}{\#\mathcal{E}(x)}\leq \sum_{i=1}^3 \limsup_{x\rightarrow \infty}\frac{\#\mathcal{E}_{i,\Q}(x)}{\#\mathcal{E}(x)}.\]
\end{lemma}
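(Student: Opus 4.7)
The plan is to recognize that this lemma is essentially a direct consequence of the Euler characteristic formula \eqref{eqn: ECF cyclotomic over Q} together with Proposition \ref{prop36}, combined with the set-theoretic observation that $\mathcal{Z}^c_{\Q}$ is contained in the union $\mathcal{E}_{1,\Q}\cup \mathcal{E}_{2,\Q}\cup \mathcal{E}_{3,\Q}$. Once this containment is established, the stated inequality follows by subadditivity of the counting function and of $\limsup$.

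First, I would unpack what membership in $\mathcal{Z}^c_{\Q}$ means. By definition any $E\in \mathcal{Z}^c_{\Q}\subseteq \mathcal{J}_{\Q}$ satisfies $(\dagger)$, has $\rank_{\Z} E(\Q)=0$, and has good ordinary reduction at $p$. Under these hypotheses, the analog of Theorem \ref{thm:van-ord} over the cyclotomic tower (cf. \cite[Chapter~3]{CS00_GCEC}) applies, so $\Sel_{p^\infty}(E/\Q_{\infty})$ is $\Lambda$-cotorsion with finite Euler characteristic given by \eqref{eqn: ECF cyclotomic over Q}. By Proposition \ref{prop36}, $E\in \mathcal{Z}^c_{\Q}$ if and only if $\chi(\Gamma_{\Q}, E[p^{\infty}])\neq 1$, equivalently the right-hand side of \eqref{eqn: ECF cyclotomic over Q} is a nontrivial power of $p$.

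Next, I would show that if $\alpha_p(E/\Q)=\Sh_p(E/\Q)=\tau_p(E/\Q)=1$, then in fact $\chi(\Gamma_{\Q},E[p^\infty])=1$. The only term in the formula that might otherwise spoil this equality is the factor $\#E(\Q)[p^\infty]^2$ in the denominator. However, since $E$ has good reduction at $p\geq 5$, the reduction map $E(\Q)[p]\hookrightarrow \widetilde{E}(\F_p)$ is injective, so $\alpha_p(E/\Q)=1$ forces $E(\Q)[p]=0$ and hence $E(\Q)[p^\infty]=0$. Consequently, the formula \eqref{eqn: ECF cyclotomic over Q} collapses to $\chi=1$. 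Contrapositively, any $E\in \mathcal{Z}^c_{\Q}$ must have at least one of $\alpha_p(E/\Q)$, $\Sh_p(E/\Q)$, or $\tau_p(E/\Q)$ divisible by $p$. This yields the containment
\[
\mathcal{Z}^c_{\Q}(x)\subseteq \mathcal{E}_{1,\Q}(x)\cup \mathcal{E}_{2,\Q}(x)\cup \mathcal{E}_{3,\Q}(x).
\]

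Finally, taking cardinalities and applying the trivial bound for a union together with the fact that $\limsup$ is subadditive finishes the proof:
\[
\frac{\#\mathcal{Z}^c_{\Q}(x)}{\#\mathcal{E}(x)}\leq \sum_{i=1}^3 \frac{\#\mathcal{E}_{i,\Q}(x)}{\#\mathcal{E}(x)},
\]
from which the desired inequality on $\limsup$'s is immediate. There is really no substantive obstacle here; the only point of care is the clean bookkeeping that $\alpha_p=1$ implies $\#E(\Q)[p^\infty]=1$, so that the denominator of \eqref{eqn: ECF cyclotomic over Q} cannot introduce a spurious factor of $p$ and create an extra case to handle.
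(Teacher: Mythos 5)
Your proof is correct and follows essentially the same route as the paper: establish the containment $\mathcal{Z}^c_{\Q}\subseteq\bigcup_{i=1}^3\mathcal{E}_{i,\Q}$ via the Euler characteristic formula \eqref{eqn: ECF cyclotomic over Q} together with the equivalence $\chi(\Gamma_{\Q},E[p^\infty])=1\Leftrightarrow \Sel_{p^\infty}(E/\Q_\infty)=0$, and conclude by subadditivity of $\limsup$. The only (harmless) difference is in how the denominator $\left(\#E(\Q)[p^\infty]\right)^2$ is handled: you argue via injectivity of reduction modulo $p$ that $\alpha_p=1$ forces $E(\Q)[p^\infty]=0$, whereas the paper simply invokes the integrality of the Euler characteristic (it is a non-negative power of $p$), so a trivial numerator already forces $\chi=1$.
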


\begin{proof}
Since the Euler characteristic formula must be an integer, it follows from \eqref{eqn: ECF cyclotomic over Q} that if $\Sh_p(E/\Q)$, $\tau_p(E/\Q)$ and $\alpha_p(E/\Q)$ are all $1$, then the Euler characteristic $\chi(\Gamma_{\Q}, E[p^{\infty}])=1$.
By \cite[Corollary 3.6]{KR21}, we know that
\[\chi(\Gamma_{\Q}, E[p^{\infty}])=1\Leftrightarrow \Sel_{p^{\infty}}(E/\Q_{\infty})=0,\]and the result follows.
\end{proof}

There are conjectural bounds due to Delaunay which predict that the quantity $\limsup_{x\rightarrow \infty}\frac{\#\mathcal{E}_{1,\Q}(x)}{\#\mathcal{E}(x)}$ becomes really small as $p$ becomes large.
We refer the reader to \cite{Del01} for further details.
In \cite{KR21}, explicit bounds are obtained for $\limsup_{x\rightarrow \infty}\frac{\#\mathcal{E}_{i,\Q}(x)}{\#\mathcal{E}(x)}$ for $i=2,3$, which indicate that these quantities approach $0$ quickly as $p$ gets large.

\begin{definition}\label{def85}
For any prime $\ell$, write $\cE_{\ell}^T(x)$ for the subset of $\cE(x)$ consisting of elliptic curves with bad reduction at $\ell$ and Kodaira type $T$.
\end{definition}

The following result characterizes the density of $\cE_{\ell}^{\textrm{I}_n}(x)$.
\begin{theorem}
\label{thm: In case of sadek}
For any prime $\ell\neq 2,3,p$, and any integer $n\geq 1$,
\[
\limsup_{x\rightarrow \infty} \frac{\#\cE_{\ell}^{\textrm{I}_n}(x)}{\#\cE(x)}\leq \frac{(\ell-1)^2}{\ell^{n+2}}.
\]
\end{theorem}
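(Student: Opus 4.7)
My plan is to recast the Kodaira-type condition as an $\ell$-adic congruence on the Weierstrass coefficients and then apply a standard lattice-point sieve to count curves by height. For a minimal short Weierstrass equation $y^2 = x^3 + Ax + B$ and $\ell \nmid 6$, one has $c_4 = -48A$ and $\Delta = -16(4A^3 + 27B^2)$; by Tate's algorithm, the reduction at $\ell$ is of Kodaira type $\textrm{I}_n$ precisely when $v_\ell(c_4) = 0$ and $v_\ell(\Delta) = n$, i.e., when $\ell \nmid A$ and $v_\ell(4A^3 + 27B^2) = n$. This is a condition depending only on $(A,B)$ modulo $\ell^{n+1}$.

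Next I would compute the resulting $\ell$-adic density. Setting $g_A(B) := 4A^3 + 27B^2$, for $A \in \Z_\ell^\times$ the congruence $g_A(B) \equiv 0 \pmod \ell$ admits solutions iff $-4A^3/27 \in (\F_\ell^\times)^2$. Writing $\chi$ for the Legendre symbol mod $\ell$, one has $\chi(-4A^3/27) = \chi(-4/27)\chi(A)$ (using $\chi(A^3) = \chi(A)$), so exactly $(\ell-1)/2$ residue classes of $A$ in $(\Z/\ell\Z)^\times$ admit such solutions. For these $A$, the two roots $\pm B_0 \in \F_\ell^\times$ are simple zeros of $g_A$ (because $g_A'(B) = 54B$ is a unit), so Hensel's lemma lifts each uniquely to $\Z_\ell$. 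Hence
\[
\mathrm{meas}_{\Z_\ell}\bigl(\{B \,:\, v_\ell(g_A(B)) = n\}\bigr) = \frac{2}{\ell^n} - \frac{2}{\ell^{n+1}} = \frac{2(\ell-1)}{\ell^{n+1}},
\]
and integrating over admissible $A$ gives the local density
\[
P_\ell = \frac{\ell-1}{2\ell} \cdot \frac{2(\ell-1)}{\ell^{n+1}} = \frac{(\ell-1)^2}{\ell^{n+2}}.
\]

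Finally, I would convert this local density into a count by height. The number of integer pairs $(A,B)$ with $|A|^3 \leq x$ and $B^2 \leq x$ is $\sim 4x^{5/6}$, and since the $\textrm{I}_n$-condition at $\ell$ depends only on $(A,B) \bmod \ell^{n+1}$, a standard lattice-point sieve yields that the count of such pairs satisfying it is $\sim 4P_\ell x^{5/6}$. Every $E \in \cE_\ell^{\textrm{I}_n}(x)$ arises from such a pair, and combined with $\#\cE(x) \asymp x^{5/6}$ this yields the desired $\limsup$ bound. The main obstacle is bookkeeping around the global minimality condition: the $\textrm{I}_n$-condition at $\ell$ automatically forces $\ell$-minimality (since $\ell \nmid A$), so by the Chinese Remainder Theorem it factors from the minimality constraints at primes $q \neq \ell$. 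A careful comparison with the asymptotic $\#\cE(x) \sim 4\prod_q (1-q^{-10}) x^{5/6}$ (or, more crudely, bounding $\#\cE_\ell^{\textrm{I}_n}(x)$ above by the full count of integer pairs satisfying the $\ell$-congruence) absorbs the harmless factor $(1-\ell^{-10})^{-1}$ arising from this factorization and leaves the clean bound $P_\ell = (\ell-1)^2/\ell^{n+2}$.
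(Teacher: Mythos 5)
Your route is genuinely different from the paper's: the proof in the text is a one-line citation of \cite[Theorem 4.11]{KR21} (which itself rests on the local density tables of Cremona--Sadek), whereas you rederive the local density from scratch. That derivation is correct and is the real content of the result: for $\ell\geq 5$ one has $c_4=-48A$ and $\Delta=-16(4A^3+27B^2)$, so type $\textrm{I}_n$ is exactly the condition $\ell\nmid A$ and $v_\ell(4A^3+27B^2)=n$ (which also forces $\ell$-minimality); your quadratic-residue count showing that exactly $(\ell-1)/2$ classes of $A$ admit roots, and the Hensel argument giving two simple roots in $B$ each contributing measure $\ell^{-n}-\ell^{-n-1}$, correctly yield the density $P_\ell=(\ell-1)^2/\ell^{n+2}$ on $\Z_\ell^2$.

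The gap is in the final normalization, and it is not mere bookkeeping: the factor you propose to ``absorb'' points in the wrong direction. Since the congruence at $\ell$ already implies $\ell$-minimality, your own sieve gives
\[
\#\cE_\ell^{\textrm{I}_n}(x)\sim 4x^{5/6}\,P_\ell\prod_{q\neq\ell}\left(1-q^{-10}\right),
\qquad
\#\cE(x)\sim 4x^{5/6}\prod_{q}\left(1-q^{-10}\right),
\]
so the ratio converges to $P_\ell\,(1-\ell^{-10})^{-1}$, which is \emph{strictly larger} than $P_\ell$. The cruder alternative you mention (bounding the numerator by all integer pairs satisfying the congruence) gives $\limsup\leq P_\ell\,\zeta(10)$, again exceeding $P_\ell$. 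So what your argument actually establishes is $\limsup\le P_\ell(1-\ell^{-10})^{-1}$ (indeed the limit exists and equals this), not the stated inequality; there is no way to land on the clean constant $P_\ell$ by this counting, because conditioning the denominator on global minimality deflates it by exactly the factor that the numerator escapes at $\ell$. To be fair, this wrinkle is inherited from the statement being proved: the same normalization is glossed over in \cite{KR21}, and the discrepancy (at most a factor $\zeta(10)\approx 1.001$) is harmless for every downstream estimate in the paper. But a correct write-up of your argument should either carry the factor $(1-\ell^{-10})^{-1}$ explicitly in the conclusion or replace the denominator by the count of all Weierstrass pairs, for which the bound $P_\ell$ does hold.
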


\begin{proof}
The result follows verbatim from the argument in \cite[Theorem 4.11]{KR21}, which proves the statement for $I_p$.
It has come to our attention that the additional assumption $\ell\neq 2,3$ is required. 
\end{proof}

As a corollary, we obtain the following result.

\begin{corollary}
With notation as above,
\begin{equation}\label{87eq}
\limsup_{x\rightarrow \infty} \frac{\#\cE_{2,\Q}(x)}{\#\cE(x)}=\sum_{\ell\neq 2,3,p} \frac{(\ell-1)^2}{\ell^2(\ell^p -1)}
<\sum_{\ell\neq 2,3,p} \frac{1}{\ell^p}
<\zeta(p)-1,\end{equation} where the sum is taken over primes $\ell\neq 2,3,p$.
\end{corollary}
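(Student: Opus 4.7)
The plan is to reduce the condition $p \mid \tau_p(E/\Q)$ to a statement about Kodaira reduction types at bad primes, then apply Theorem \ref{thm: In case of sadek} and sum a geometric series.

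First, I would invoke the classical fact that, for $p \geq 5$, the local Tamagawa number $c_\ell(E/\Q)$ is divisible by $p$ if and only if $E$ has Kodaira reduction type $\textrm{I}_n$ at $\ell$ with $p \mid n$ (the additive types cannot contribute, since their component groups have order at most $4$). Hence $p \mid \tau_p(E/\Q) = \prod_\ell c_\ell^{(p)}(E/\Q)$ exactly when $E$ has type $\textrm{I}_{pm}$ at some prime $\ell$ for some integer $m \geq 1$. Hypothesis $(\dagger)$ rules out $\ell \in \{2,3\}$, while the standing good-reduction assumption at $p$ forces $c_p = 1$, so also $\ell \neq p$. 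Therefore
\[
\cE_{2,\Q}(x) \;\subseteq\; \bigcup_{\ell \neq 2,3,p}\;\bigcup_{m \geq 1} \cE_\ell^{\textrm{I}_{pm}}(x).
\]

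Next, I take a union bound, divide by $\#\cE(x)$, pass to $\limsup$ as $x \to \infty$, and apply Theorem \ref{thm: In case of sadek} termwise. Summing the geometric series in $m$ then yields
\[
\limsup_{x \to \infty} \frac{\#\cE_{2,\Q}(x)}{\#\cE(x)} \;\leq\; \sum_{\ell \neq 2,3,p} \sum_{m \geq 1} \frac{(\ell-1)^2}{\ell^{pm+2}} \;=\; \sum_{\ell \neq 2,3,p} \frac{(\ell-1)^2}{\ell^2(\ell^p - 1)},
\]
which is the first estimate asserted in \eqref{87eq}.

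The remaining two strict inequalities are short elementary checks. The bound $\frac{(\ell-1)^2}{\ell^2(\ell^p - 1)} < \ell^{-p}$ rearranges to $\ell^{p-2}(2\ell - 1) > 1$, which is trivial for $\ell, p \geq 5$. The bound $\sum_{\ell \neq 2,3,p} \ell^{-p} < \zeta(p) - 1 = \sum_{n \geq 2} n^{-p}$ follows by comparison, since the right-hand sum ranges over all integers $n \geq 2$ while the left omits $\ell = 2, 3, p$ (each contributing a strictly positive term) together with all composite $n$.

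The one subtlety worth flagging is that the union bound in the second step is not sharp: an elliptic curve can have multiplicative reduction of type $\textrm{I}_{pm}$ at several primes $\ell$ simultaneously, producing overlaps. As a result, the middle $=$ in \eqref{87eq} is best interpreted as $\leq$; this does not affect the intended use of the corollary, which is to certify that the density in question decays rapidly with $p$ (at rate $\zeta(p) - 1$).
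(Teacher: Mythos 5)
Your argument is essentially identical to the paper's: both reduce $p \mid \tau_p(E/\Q)$ to Kodaira type $\mathrm{I}_{np}$ at some $\ell \neq 2,3,p$ via the component-group fact from Silverman, apply Theorem \ref{thm: In case of sadek} termwise, sum the geometric series in $n$, and verify the two elementary inequalities. Your closing remark is also apt: the paper's own proof likewise only establishes $\leq$ in the first relation of \eqref{87eq}, so the displayed ``$=$'' should indeed be read as an upper bound.
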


\begin{proof}
By \cite[p. 448]{silverman2009}, the Tamagawa number $c_\ell(E)$ is divisible by $p$ precisely when the Kodaira type of $E_{/\Q_\ell}$ is of the form $\textrm{I}_m$ for some integer $m\geq 1$ which is divisible by $p$.
Note that the primes $\ell=2,3$ are excluded since $E\in \mathcal{E}_{2,\Q}(x)$ satisfies $(\dagger)$ by assumption.
Therefore, one has the bound
\[
\begin{split}\limsup_{x\rightarrow \infty} \frac{\#\cE_{2,\Q}(x)}{\#\cE(x)}
&\leq \sum_{\ell\neq 2,3,p}\sum_{n=1}^{\infty} \limsup_{x\rightarrow \infty} \frac{\#\cE_{\ell}^{\textrm{I}_{np}}(x)}{\#\cE(x)}\\
&\leq \sum_{\ell\neq 2,3,p}\sum_{n=1}^{\infty} \frac{(\ell-1)^2}{\ell^{pn+2}}\\
& = \sum_{\ell\neq 2,3,p} \frac{(\ell-1)^2}{\ell^2}\cdot\frac{1}{\ell^p-1}.
\end{split}
\]
Moreover,
\[\begin{split}\frac{(\ell-1)^2}{\ell^2}\cdot \frac{1}{\ell^p-1}&=\left(1-\frac{2}{\ell}+\frac{1}{\ell^2}\right)\frac{1}{\ell^p-1}\\
&<\left(1-\frac{1}{\ell^p}\right)\frac{1}{\ell^p-1}=\frac{1}{\ell^p}.
\end{split}\]
The result follows.
\end{proof}

\begin{remark}
We clarify that there is a minor inaccuracy in the formula in \cite{KR21}, since there the sum is taken only for the Kodaira type $\textrm{I}_p$, when in fact it needs to be taken over all $\textrm{I}_n$ where $p| n$.
The above estimate \eqref{87eq} is the correct one.
\end{remark}

For $x>0$, denote by $\mathcal{W}(x)$ the set of Weierstrass equations for which the height is $\leq x$.
For $\kappa=(a,b)\in \F_p\times \F_p$ with $\Delta(\kappa):=4a^3+27b^2$ non-zero, associate the elliptic curve $E_{\kappa}$ defined by the Weierstrass equation
\[
E_{\kappa}:Y^2=X^3+aX+b.
\]
Note that $\kappa$ is not uniquely determined by the isomorphism class of $E_{\kappa}$.
Denote by $\mathfrak{S}_p$ the set of pairs $\kappa=(a,b)\in \F_p\times \F_p$ such that $E_{\kappa}$ contains a point of order $p$ over $\F_p$.
Set $d(p):=\# \mathfrak{S}_p$.
Let $\mathcal{W}'(x)\subset \mathcal{W}(x)$ be the set of Weierstrass equations $Y^2=X^3+AX+B$ which reduce to $E_{\kappa}$ for some $\kappa\in \mathfrak{S}_p$.

\begin{theorem}
With notation as above,
\[
\limsup_{x\rightarrow \infty} \frac{\#\cE_{3,\Q}(x)}{\#\cE(x)}\leq \zeta(10)\cdot \frac{d(p)}{p^2}.
\]
\end{theorem}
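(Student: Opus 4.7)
The plan is to prove the bound by a density argument: I would reduce to counting Weierstrass pairs whose mod-$p$ reduction lies in $\mathfrak{S}_p$, and then invoke the standard height-count $\#\mathcal{E}(x)\sim\frac{4}{\zeta(10)}x^{5/6}$. First, if $E\in\mathcal{E}_{3,\Q}(x)$, then $p\mid\alpha_p(E/\Q)$ forces $E$ to have good reduction at $p$ (otherwise $\alpha_p(E/\Q)=1$) and its reduction $\widetilde{E}$ to contain an $\F_p$-rational point of order $p$. Writing the unique minimal Weierstrass equation of $E$ as $Y^2=X^3+AX+B$, this condition is equivalent to $(\bar A,\bar B)\in\mathfrak{S}_p$; curves with bad reduction at $p$ satisfy $\Delta(\bar A,\bar B)\equiv 0\pmod p$ and are automatically excluded by the non-singularity requirement in the definition of $\mathfrak{S}_p$. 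Dropping the additional constraints of minimality and of $(\dagger)$, which can only shrink the set further, this yields the inequality $\#\mathcal{E}_{3,\Q}(x)\leq\#\mathcal{W}'(x)$.

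Next, I would compute $\#\mathcal{W}'(x)$ by a direct lattice-point estimate. For each fixed residue class $\kappa=(a,b)\in\F_p\times\F_p$, the number of integer pairs $(A,B)$ with $|A|\leq x^{1/3}$, $|B|\leq x^{1/2}$ and $(A,B)\equiv(a,b)\pmod p$ equals $\frac{4x^{5/6}}{p^2}+O_p(x^{1/2})$, with implied constant depending only on the fixed prime $p$. Summing over the $d(p)$ residue classes comprising $\mathfrak{S}_p$ then gives
\[
\#\mathcal{W}'(x)=\frac{4d(p)}{p^2}\,x^{5/6}+O_p(x^{1/2}).
\]

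Finally, combining this with the known asymptotic $\#\mathcal{E}(x)\sim\frac{4}{\zeta(10)}x^{5/6}$, in which the factor $\zeta(10)^{-1}=\prod_\ell(1-\ell^{-10})$ arises from the Ekedahl-style local sieve enforcing that $\gcd(A^3,B^2)$ be free of twelfth powers, one divides to obtain
\[
\limsup_{x\to\infty}\frac{\#\mathcal{E}_{3,\Q}(x)}{\#\mathcal{E}(x)}\leq\lim_{x\to\infty}\frac{\#\mathcal{W}'(x)}{\#\mathcal{E}(x)}=\zeta(10)\cdot\frac{d(p)}{p^2},
\]
as claimed. The argument is completely parallel to the treatment of the analogous estimate for $\tau_p$ in \cite[\S 4]{KR21}; the only (mild) subtlety is that by bounding $\#\mathcal{E}_{3,\Q}(x)$ above by $\#\mathcal{W}'(x)$ we are overcounting by the $\zeta(10)$ factor that otherwise enters the denominator through the twelfth-power-free sieve, which is precisely why $\zeta(10)$ appears on the right-hand side of the final bound.
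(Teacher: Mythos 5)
Your argument is correct and is essentially the proof the paper intends: the paper simply cites \cite[Theorem 4.10]{KR21}, whose argument is exactly this reduction to counting Weierstrass pairs in $\mathcal{W}'(x)$ (note the paper even sets up $\mathfrak{S}_p$, $d(p)$, and $\mathcal{W}'(x)$ immediately beforehand for this purpose), followed by the lattice-point count $\#\mathcal{W}'(x)\sim \tfrac{4d(p)}{p^2}x^{5/6}$ and division by $\#\mathcal{E}(x)\sim \tfrac{4}{\zeta(10)}x^{5/6}$. Your handling of minimality at $p$ (valid since $p\geq 5$) and of the bad-reduction case via the non-vanishing of $\Delta(\kappa)$ is also consistent with the paper's conventions.
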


\begin{proof}
See \cite[Theorem 4.10]{KR21}.
\end{proof}

Putting it all together, we obtain the following result.
\begin{theorem}
With notation as above,
\[
\limsup_{x\rightarrow \infty} \frac{\#\mathcal{Z}^c_{\Q}(x)}{\#\mathcal{E}(x)}< \limsup_{x\rightarrow \infty} \frac{\#\cE_{1,\Q}(x)}{\#\cE(x)}+ \sum_{\ell\neq 2,3,p} \frac{1}{\ell^p}+\zeta(10)\cdot \frac{d(p)}{p^2}.
\]
\end{theorem}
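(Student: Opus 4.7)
The plan is to combine the three previous estimates using the earlier decomposition lemma. Namely, recall we have shown
\[
\limsup_{x\to\infty}\frac{\#\mathcal{Z}^c_{\Q}(x)}{\#\mathcal{E}(x)}\leq \sum_{i=1}^{3}\limsup_{x\to\infty}\frac{\#\mathcal{E}_{i,\Q}(x)}{\#\mathcal{E}(x)},
\]
which follows because the Euler characteristic formula \eqref{eqn: ECF cyclotomic over Q} forces $\chi(\Gamma_{\Q},E[p^\infty])=1$ whenever all three of $\Sh_p(E/\Q)$, $\tau_p(E/\Q)$, $\alpha_p(E/\Q)$ are trivial (and, by \cite[Corollary 3.6]{KR21}, this is equivalent to $\Sel_{p^\infty}(E/\Q_\infty)=0$). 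So any $E\in \mathcal{Z}^c_\Q$ must lie in at least one of the three sets $\mathcal{E}_{i,\Q}$.

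Next, I would invoke the estimates already established. The term $\limsup_{x\to\infty}\#\cE_{1,\Q}(x)/\#\cE(x)$ is left unbounded here (it is the Tate--Shafarevich contribution, which is handled heuristically later in Section \ref{section: CL heuristics for Sha}). For the Tamagawa contribution I would cite the immediately preceding corollary, which gives
\[
\limsup_{x\to\infty}\frac{\#\cE_{2,\Q}(x)}{\#\cE(x)}<\sum_{\ell\neq 2,3,p}\frac{1}{\ell^p}.
\]
For the anomalous-reduction contribution I would apply the theorem cited from \cite[Theorem 4.10]{KR21} to obtain
\[
\limsup_{x\to\infty}\frac{\#\cE_{3,\Q}(x)}{\#\cE(x)}\leq \zeta(10)\cdot \frac{d(p)}{p^2}.
\]

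Adding the three bounds yields the claimed inequality; the strict inequality is inherited from the strict bound in the $\cE_{2,\Q}$ estimate. There is no genuine obstacle here — the proof is essentially a one-line assembly of already-proved inputs. The only subtlety worth flagging is that the condition $(\dagger)$ built into the definition of $\mathcal{E}_{i,\Q}(x)$ is exactly what allows us to exclude the primes $\ell=2,3$ from the sum in the $\cE_{2,\Q}$ estimate (since Theorem \ref{thm: In case of sadek} requires $\ell\neq 2,3,p$), so one should note that $\mathcal{J}_\Q\subset\{E:E\text{ satisfies }(\dagger)\}$ and the decomposition is consistent across all three terms.
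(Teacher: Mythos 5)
Your proposal is correct and coincides with the paper's intended argument: the theorem is stated there with no separate proof precisely because it is the assembly of the decomposition lemma with the bounds on $\#\cE_{2,\Q}(x)/\#\cE(x)$ and $\#\cE_{3,\Q}(x)/\#\cE(x)$, with the strict inequality inherited from the Tamagawa estimate. Your remark about the role of $(\dagger)$ in justifying the exclusion of $\ell=2,3$ is also consistent with how the paper sets up the definitions.
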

One expects that the term $\frac{\cE_{1,\Q}(x)}{\cE(x)}$ decreases rapidly as $p\rightarrow \infty$, as is explained in the next section.

\subsection{}
Let $K$ be a fixed imaginary quadratic field and $p\geq 5$ a fixed prime.
Recall that $K_{\infty}$ is the anticyclotomic $\Z_p$-extension of $K$.
If $E_{/\Q}$ is an elliptic curve for which $\rank_{\Z} E(K)=0$ and $\Sh(E/K)$ is finite, then the Selmer group $\Sel_{p^{\infty}}(E/K)$ is finite, and $\Selp$ is cofinitely generated and cotorsion as a $\Lambda$-module (see \cite[Corollary 4.9]{Gre98_PCMS}).
We assume throughout that $\Sh(E/K)$ is finite for the elliptic curves considered.

\begin{lemma}
\label{boringlemma}
Let $E_{/\Q}$ be an elliptic curve such that $\rank_{\Z} E(K)=0$ and $\Sh(E/K)$ is finite, then the following conditions are equivalent.
\begin{enumerate}
 \item The Euler characteristic $\chi(\Gamma_K, E[p^{\infty}])=1$.
 \item The Selmer group $\Sel_{p^{\infty}}(E/K_{\infty})$ has $\mu=0$ and $\lambda=0$.
 \item The Selmer group $\Sel_{p^{\infty}}(E/K_{\infty})$ has finite cardinality.
 \item The Selmer group $\Sel_{p^{\infty}}(E/K_{\infty})=0$.
\end{enumerate}
\end{lemma}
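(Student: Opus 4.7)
The plan is to reduce the entire statement to Proposition \ref{prop36} applied to $M := \Selp$, plus one additional ingredient for the last equivalence. So first I would check that the hypotheses of Proposition \ref{prop36} hold in the present setting, namely that $M$ is cofinitely generated and cotorsion as a $\Lambda$-module, and that the cohomology groups $H^i(\Gamma_K, M)$ are finite. Cotorsion follows from \cite[Corollary 4.9]{Gre98_PCMS}: under the hypotheses $\rank_{\Z} E(K) = 0$ and $\Sh(E/K)$ finite, the group $\Sel_{p^{\infty}}(E/K)$ is itself finite, which forces $\Selp^{\vee}$ to be $\Lambda$-torsion.

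For the finiteness of $H^0(\Gamma_K, M) = M^{\Gamma_K}$ I would repeat the argument from the proof of Lemma \ref{lemma23}: the result of Zerbes identifies the $\Z_p$-corank of $M^{\Gamma_K}$ with the order of vanishing of $f_M(T)$ at $T=0$, Greenberg's control theorem identifies this corank with $\corank_{\Z_p} \Sel_{p^{\infty}}(E/K)$, and the short exact sequence
\[
0 \longrightarrow E(K) \otimes \Q_p/\Z_p \longrightarrow \Sel_{p^{\infty}}(E/K) \longrightarrow \Sh(E/K)[p^{\infty}] \longrightarrow 0
\]
shows that this corank equals $\rank_{\Z} E(K) = 0$. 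Hence $M^{\Gamma_K}$, being a cofinitely generated $\Z_p$-module of corank zero, is finite, and by Lemma \ref{balancedrank} so is $H^1(\Gamma_K, M)$. Observe that the hypothesis $E(K)[p] = 0$ appearing in Lemma \ref{lemma23} is not needed here: I only need the equality of $\Z_p$-coranks, for which the finiteness of the kernel and cokernel of the control map is enough.

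Once these hypotheses are in place, Proposition \ref{prop36} directly yields the equivalence of (1), (2), and (3), since conditions (2) and (3) of the present lemma match conditions (3) and (4) of that proposition. The implication (4) $\Rightarrow$ (3) is trivial, so the only remaining step is (3) $\Rightarrow$ (4). For this I would invoke the anticyclotomic analogue of \cite[Proposition 4.14]{Gre99}, which asserts that $\Selp$ has no proper $\Lambda$-submodule of finite index; this is exactly the result quoted in the proof of Theorem \ref{th46}. Applied to a \emph{finite} $M$, the zero submodule has finite index, so it cannot be proper, forcing $M = 0$.

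I do not expect a serious obstacle here: once Proposition \ref{prop36} is available, the lemma is a bookkeeping exercise together with the single structural input that $\Selp$ contains no proper finite-index $\Lambda$-submodule. The only point that requires any thought is the mild strengthening of Lemma \ref{lemma23} described above, which is immediate from its proof.
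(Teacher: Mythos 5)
Your proposal is correct and follows essentially the same route as the paper: the equivalence of (1)--(3) via Proposition \ref{prop36} and the implication (3) $\Rightarrow$ (4) via the absence of proper finite-index $\Lambda$-submodules of $\Selp$. Your extra care in verifying the hypotheses of Proposition \ref{prop36} (cotorsionness from \cite[Corollary 4.9]{Gre98_PCMS} and finiteness of the $\Gamma_K$-cohomology without assuming $E(K)[p]=0$) fills in details the paper leaves implicit, but does not change the argument.
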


\begin{proof}
The equivalence of conditions (1)-(3) follows from Proposition \ref{prop36}.
Since the Selmer group $\Sel_{p^{\infty}}(E/K_{\infty})$ does not contain any proper finite index submodules, it follows that it must equal $0$.
Hence, (3) is equivalent to (4).
\end{proof}

\begin{definition}
Let $\mathcal{Z}_{K}\subset \mathcal{J}_{K}$ be the subset of elliptic curves $E_{/\Q}$ for which the equivalent conditions of Lemma \ref{boringlemma} hold.
Denote by $\mathcal{Z}^c_{K}$ its complement in $\mathcal{J}_{K}$.
\end{definition}
It follows from the Euler characteristic formula \eqref{ecf} that \begin{equation}\limsup_{x\rightarrow \infty} \frac{\#\mathcal{Z}^c_{K}(x)}{\#\mathcal{E}(x)}\leq \sum_{i=1}^3 \limsup_{x\rightarrow \infty}\frac{\#\mathcal{E}_{i,K}(x)}{\#\mathcal{E}(x)}.\end{equation}
We prove explicit upper bounds for the quantities $\limsup_{x\rightarrow \infty}\frac{\#\mathcal{E}_{i,K}(x)}{\#\mathcal{E}(x)}$ for $i=2,3$.
We show these limits approach $0$ as $p\rightarrow \infty$.
After establishing these bounds, we present Cohen--Lenstra heuristics for the variation of Tate--Shafarevich groups which give us a good idea of what to expect for $\limsup_{x\rightarrow \infty}\frac{\#\mathcal{E}_{1,K}(x)}{\#\mathcal{E}(x)}$.
These heuristics also apply to Tate--Shafarevich groups over $\Q$, thereby further improving on the arguments in \cite[ \S4]{KR21}.

We record the following well-known fact (see for example \cite[pp. 2132-2133]{Bri07}).
\begin{fact}
Let $\ell\neq p$ be a prime.
Then the primes of $K$ above $\ell$ are non-split in $K_{\infty}$ if and only if $\ell$ splits in $K$.
\end{fact}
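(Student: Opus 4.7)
My plan is to analyze the decomposition group in $\Gamma = \Gal(\Kinf/K) \cong \Zp$ of a prime $v$ of $K$ above $\ell$. Because $\Kinf/K$ is unramified outside $p$ and $\ell \neq p$, the prime $v$ is unramified in $\Kinf$, and its decomposition group is the closed subgroup of $\Gamma$ topologically generated by the (well-defined, since $\Gamma$ is abelian) Frobenius $\Frob_v$. The only closed subgroups of $\Zp$ are $\{0\}$ and $p^n\Zp$, so $v$ is totally split in $\Kinf$ exactly when $\Frob_v = 0$ and is finitely decomposed otherwise. The statement thus reduces to determining when $\Frob_v$ vanishes.

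For the easy direction, assume $\ell$ is inert or ramified in $K$, so that $v$ is the unique prime above $\ell$ and $c(v) = v$, where $c$ denotes complex conjugation. The essential structural input is that $\Gal(\Kinf/\Q) \cong \Gamma \rtimes \langle c\rangle$ is pro-dihedral, i.e.\ $c\gamma c^{-1} = \gamma^{-1}$ for all $\gamma \in \Gamma$. Combining the functorial identity $c\cdot \Frob_v\cdot c^{-1} = \Frob_{c(v)} = \Frob_v$ with the dihedral relation yields $\Frob_v = \Frob_v^{-1}$, so $2\Frob_v = 0$ in $\Gamma \cong \Zp$; since $p$ is odd this forces $\Frob_v = 0$, and $v$ splits completely in $\Kinf$.

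For the converse, write $\ell\cO_K = v\bar v$; the same dihedral computation gives $\Frob_{\bar v} = \Frob_v^{-1}$, so both Frobenii vanish simultaneously and the task is to rule out $\Frob_v = 0$. Here I would invoke the ring class field description of $\Kinf$: it sits inside $\bigcup_n H_{p^n}$, where $H_{p^n}$ is the ring class field of $K$ of conductor $p^n$, and Artin reciprocity identifies $\Frob_v$ with the image in the anticyclotomic $\Zp$-quotient of the class of $v$ in $\op{Pic}(\cO_{p^n})$, where $\cO_{p^n} := \Z + p^n\cO_K$. Because $[v]+[\bar v] = [(\ell)] = 0$ and $c[v] = [\bar v]$, the class $[v]$ already lies in the $(c=-1)$-part of $\op{Pic}(\cO_{p^n})$, so only the $p$-part is at issue. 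One then exploits the standard exact sequence
\[
1 \to (\cO_K/p^n\cO_K)^\times\!\big/\!\big((\Z/p^n\Z)^\times \cdot \op{im}(\cO_K^\times)\big) \to \op{Pic}(\cO_{p^n}) \to \op{Pic}(\cO_K) \to 1
\]
together with the observation that any generator of $v$ in $\cO_K$ has nonzero $\sqrt{-d}$-coefficient and therefore exits $\cO_{p^n}$ for $n$ sufficiently large, which produces a nonzero $p$-part of $[v]$ in the limit. The main obstacle is making this final step fully precise---tracking the $c$-action through every term and bounding the $p$-order of $[v]$ from below---and I would defer here to the detailed computation on pp.~2132--2133 of \cite{Bri07}, which is exactly this calculation; granted it, $\Frob_v \neq 0$, and both primes above $\ell$ are finitely decomposed in $\Kinf$.
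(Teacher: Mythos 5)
The paper does not actually prove this Fact; it simply cites \cite[pp.~2132--2133]{Bri07}, so there is no internal argument to match yours against. Your reconstruction is the standard proof and its overall structure is sound. The first direction is complete as written: since $\ell\neq p$ is unramified in $\Kinf/K$ and $\Gamma$ is abelian, $\Frob_v\in\Gamma$ is well defined, the pro-dihedral relation forces $\Frob_v=\Frob_v^{-1}$ whenever $c(v)=v$, and oddness of $p$ kills $\Frob_v$; hence inert and ramified primes split completely.

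Two points in the converse need tightening before the argument is self-contained. First, $v$ need not be principal, so "any generator of $v$ in $\cO_K$" should be replaced by a generator $\alpha$ of $v^{h_K}$; one has $\alpha\notin\Q$ because $v\neq\bar v$. Second, and more substantively, showing that $[v]$ (or $[v^{h_K}]$) is eventually \emph{nontrivial} in $\op{Pic}(\cO_{p^n})$ is not enough: since $\varprojlim_n\op{Pic}(\cO_{p^n})\cong\Z_p\times(\text{finite})$ with $\Gamma$ the torsion-free quotient, you must show the order of $[v]$ in $\op{Pic}(\cO_{p^n})$ is \emph{unbounded} as $n\to\infty$, i.e.\ that $[v]$ is non-torsion in the inverse limit. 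The clean way to do this through your exact sequence is to observe that if $\alpha^M$ were congruent to a rational times a unit modulo $p^n$ for every $n$, then $\alpha^M/\bar\alpha^M$ would be a root of unity, forcing $v^{Mh_K}=\bar v^{Mh_K}$ and contradicting $v\neq\bar v$. You explicitly defer this step to \cite{Bri07}, which is exactly the computation the paper itself points to, so the proposal is consistent with the paper's (absent) proof; filling in the two points above would make it independent of that reference.
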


If $v|\ell$ and $v\in S_{\ns}\backslash S_p$, then the Kodaira type of $E_{/\Q_\ell}$ is the same as that of $E_{/K_v}$ as $K_v=\Q_\ell$.
Thus, $p|c_v(E)$ if and only if $E_{/\Q_\ell}$ has Kodaira type $\textrm{I}_{pn}$.
Let $\mathfrak{s}_K$ be the set of primes $\ell\neq 2,3,p$ which split in $K=\Q(\sqrt{-d})$, i.e., $\ell\neq 2,3,p$ and $\left(\frac{-d}{p}\right)=1$.
We make the following observation.

\begin{lemma}\label{lemma814}
 With respect to prior notation,
\[\limsup_{x\rightarrow\infty} \frac{\#\cE_{2,K}(x)}{\#\cE(x)}\leq \sum_{\ell\in \mathfrak{s}_K} \sum_{n=1}^{\infty} \limsup_{x\rightarrow\infty} \frac{\#\cE_\ell^{\textrm{I}_{np}}(x)}{\#\cE(x)},\]
where $\cE_\ell^{\textrm{I}_{np}}(x)$ is as in Definition \ref{def85}.
\end{lemma}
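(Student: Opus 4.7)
The plan is to establish a pointwise inclusion of sets
\[
\cE_{2,K}(x) \;\subseteq\; \bigcup_{\ell \in \mathfrak{s}_K}\;\bigcup_{n \geq 1} \cE_\ell^{\textrm{I}_{np}}(x),
\]
and then deduce the stated density estimate by subadditivity of $\limsup$. The preceding Fact is what funnels the relevant rational primes into $\mathfrak{s}_K$; the rest is bookkeeping with local data, together with the exclusion $\ell \neq 2, 3$ forced by $(\dagger)$. This mirrors the cyclotomic argument given after Theorem \ref{thm: In case of sadek}, with the Fact replacing the trivial observation (in the $\Q_\infty/\Q$ case) that every non-$p$ prime is finitely decomposed.

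To prove the inclusion, I would fix $E \in \cE_{2,K}(x)$ and unpack the hypothesis $p \mid \tau_p(E/K) = \prod_{v \in S_{\ns} \setminus S_p} c_v^{(p)}(E/K)$ to find some $v \in S_{\ns} \setminus S_p$ with $p \mid c_v(E/K)$. Let $\ell$ be the rational prime below $v$. Since $v \notin S_p$, $\ell \neq p$; since $v \in S_{\ns}$, the Fact says that $\ell$ splits in $K$, so $K_v \cong \Q_\ell$, and in particular $E$ has the same Kodaira type at $v$ as at $\ell$, with $c_v(E/K) = c_\ell(E/\Q)$. The classical formula for Tamagawa numbers recalled earlier (\cite[p.~448]{silverman2009}) then forces the Kodaira type of $E$ at $\ell$ to be $\textrm{I}_m$ for some $m$ with $p \mid m$; writing $m = np$, we get $E \in \cE_\ell^{\textrm{I}_{np}}(x)$. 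Hypothesis $(\dagger)$ excludes $\ell = 2, 3$, so altogether $\ell \neq 2, 3, p$ and $\ell$ splits in $K$, i.e. $\ell \in \mathfrak{s}_K$.

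Given the inclusion, I would count: $\#\cE_{2,K}(x) \leq \sum_{\ell \in \mathfrak{s}_K}\sum_{n \geq 1}\#\cE_\ell^{\textrm{I}_{np}}(x)$, divide by $\#\cE(x)$, and take $\limsup$. The only genuinely delicate step, and the one I expect to be the main obstacle, is the interchange of $\limsup$ with the double sum. This will be handled by a standard $\varepsilon$-truncation: Theorem \ref{thm: In case of sadek} furnishes the summable majorant $\sum_{\ell,n}(\ell-1)^2/\ell^{np+2} < \infty$, so one fixes a finite truncation $F \subset \mathfrak{s}_K \times \NN$ capturing all but $\varepsilon$ of this tail, applies finite-sum subadditivity of $\limsup$ on $F$, and uses the tail bound to absorb the remainder. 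This is the same technical device implicitly used in the cyclotomic bound earlier in Section \ref{section: vary elliptic curve}, and no new idea is required.
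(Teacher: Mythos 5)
Your proof is correct and matches the paper's (implicit) argument: the lemma is stated without proof, its content being precisely the set inclusion you derive from the Fact (non-split in $K_\infty$ $\Leftrightarrow$ split in $K$), the identification $K_v=\Q_\ell$ with matching Kodaira type and Tamagawa number, the Silverman criterion $p\mid c_\ell$ $\Leftrightarrow$ type $\mathrm{I}_{np}$, and the exclusion of $\ell=2,3$ via $(\dagger)$. The one caveat is that your $\varepsilon$-truncation requires a tail bound on $\#\cE_\ell^{\mathrm{I}_{np}}(x)/\#\cE(x)$ that is uniform in $x$, whereas Theorem \ref{thm: In case of sadek} only controls the $\limsup$ term by term; this is a genuine (if standard) point to check against the counting argument underlying that theorem, and the paper itself simply asserts the interchange, so you are if anything more careful than the source.
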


\begin{theorem}
Let $p\geq 5$ be a fixed prime.
Let $K$ be a fixed imaginary quadratic field and $\mathfrak{s}_K$ the set of primes $\ell\neq p$ that split in $K$.
Then
\begin{align*}
\limsup_{x\rightarrow \infty}\frac{\#\cE_{2,K}(x)}{\#\cE(x)}
&\leq \sum_{\ell\in\mathfrak{s}_K} \frac{(\ell-1)^2}{\ell^2(\ell^p -1)} < \sum_{\ell\in \mathfrak{s}_K} \frac{1}{\ell^p}<\zeta(p)-1.
\end{align*}
\end{theorem}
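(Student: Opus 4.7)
The plan is to follow exactly the same template as the $\Q$-case bound (the corollary to Theorem \ref{thm: In case of sadek} displayed in equation \eqref{87eq}), with the single twist that only primes $\ell$ which split in $K$ can contribute. The setup is already encoded in Lemma \ref{lemma814}, so the argument reduces to a geometric series and two elementary estimates.

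First I would start from Lemma \ref{lemma814}, which gives
\[
\limsup_{x\rightarrow \infty}\frac{\#\cE_{2,K}(x)}{\#\cE(x)}\leq \sum_{\ell\in\mathfrak{s}_K}\sum_{n=1}^{\infty}\limsup_{x\to\infty}\frac{\#\cE_\ell^{\textrm{I}_{np}}(x)}{\#\cE(x)}.
\]
For each prime $\ell \in \mathfrak{s}_K$ (note $\ell \neq 2,3,p$ is built into the definition of $\mathfrak{s}_K$), I would apply Theorem \ref{thm: In case of sadek} with Kodaira symbol $\textrm{I}_{np}$ to bound each inner limsup by $(\ell-1)^2/\ell^{np+2}$. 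Summing the geometric series over $n\geq 1$ produces
\[
\sum_{n=1}^{\infty}\frac{(\ell-1)^2}{\ell^{np+2}}=\frac{(\ell-1)^2}{\ell^2}\cdot\frac{1}{\ell^p-1},
\]
which yields the first inequality of the theorem.

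Next, for the middle inequality I would reuse the short manipulation already carried out in the $\Q$-case: writing $(\ell-1)^2/\ell^2 = 1-2/\ell+1/\ell^2 < 1-1/\ell^p$ gives
\[
\frac{(\ell-1)^2}{\ell^2(\ell^p-1)} < \frac{1-1/\ell^p}{\ell^p-1}=\frac{1}{\ell^p},
\]
and summing over $\ell\in\mathfrak{s}_K$ produces $\sum_{\ell\in\mathfrak{s}_K}1/\ell^p$. Finally, since $\mathfrak{s}_K$ is a set of primes $\geq 2$, the last inequality follows from
\[
\sum_{\ell\in\mathfrak{s}_K}\frac{1}{\ell^p}\leq \sum_{\ell\text{ prime}}\frac{1}{\ell^p}\leq \sum_{n=2}^{\infty}\frac{1}{n^p}=\zeta(p)-1.
\]

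There is no real obstacle here; the only thing one must be careful about is the hypothesis $\ell\neq 2,3$ in Theorem \ref{thm: In case of sadek}, which is automatically built into $\mathfrak{s}_K$, and the fact that the exclusion of $\ell=p$ is harmless because $p$ ramifies or splits or is inert in $K$ but we have explicitly excluded it from $\mathfrak{s}_K$. The point where condition $(\dagger)$ matters is precisely to exclude Kodaira type $\textrm{I}_{np}$ at $\ell=2,3$, so the restriction to $\ell\neq 2,3$ in the sum is consistent with $E\in \cE_{2,K}(x)$ satisfying $(\dagger)$.
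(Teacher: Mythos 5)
Your proposal is correct and follows essentially the same route as the paper: combine Lemma \ref{lemma814} with Theorem \ref{thm: In case of sadek}, sum the geometric series in $n$, and then apply the same two elementary estimates used in the $\Q$-case corollary. Your remark about the $\ell\neq 2,3$ restriction being built into $\mathfrak{s}_K$ (via condition $(\dagger)$) is a worthwhile clarification, since the theorem statement itself only says $\ell\neq p$ while the earlier definition of $\mathfrak{s}_K$ excludes $2$, $3$, and $p$.
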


\begin{proof}
Combining Lemma \ref{lemma814} and Theorem \ref{thm: In case of sadek}, we obtain
\begin{align*}
\limsup_{x\rightarrow \infty} \frac{\#\cE_{2,K}(x) }{\#\cE(x)}
&= \limsup_{x\rightarrow\infty}\sum_{\ell\in \mathfrak{s}_K} \sum_{n\geq 1} \frac{\#\cE_{\ell}^{\textrm{I}_{np}}(x)}{\#\cE(x)}\\
&\leq \sum_{\ell\in \mathfrak{s}_K} \frac{(\ell-1)^2}{\ell^2(\ell^p -1)}\\
& < \sum_{\ell\in \mathfrak{s}_K} \frac{1}{\ell^p}\\
&< \zeta(p) -1.
\end{align*}
\end{proof}

Finally, we analyze the term $\limsup_{x\rightarrow \infty}\frac{\#\cE_{3,K}(x)}{\#\cE(x)}$.
There is a dichotomy that arises depending on whether $p$ is inert or non-inert in $K$.
If $p$ is non-inert in $K$, the analysis is the same as in that in \cite{KR21}.
This is because in this case, the residue field $k_v$ of $K$ at any prime $v|p$ is equal to $\F_p$.
However, if $p$ is inert, there are some minor modifications since in this case, the residue field is $\F_{p^2}$.

\begin{lemma}
Let $E_{/\Q}$ be an elliptic curve and $a_p(E):=1+p-\#\widetilde{E}(\F_p)$.
Then $p$ divides $\#\widetilde{E}(\F_{p^2})$ if and only if $a_p(E)\equiv \pm 1\pmod{p}$, or equivalently, $\#\widetilde{E}(\F_p)\equiv 0,2\pmod{p}$.
\end{lemma}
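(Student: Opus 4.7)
The proof is a direct calculation that mirrors Case 2 of Lemma \ref{lemma46}, so the plan is simply to record the three algebraic steps carefully.

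First, I would invoke the standard formula for the number of $\F_{p^n}$-points on a reduction with good reduction. Specifically, if $\alpha,\beta$ are the roots of $X^2 - a_p(E) X + p$, then Hasse's theorem (see \cite[Chapter V, Theorem 2.3.1]{silverman2009}, which was already cited earlier in the paper) gives $\#\widetilde{E}(\F_{p^2}) = p^2 + 1 - (\alpha^2+\beta^2)$. Since $\alpha^2+\beta^2 = a_p(E)^2 - 2p$, this simplifies to
\[
\#\widetilde{E}(\F_{p^2}) = (p+1)^2 - a_p(E)^2 = \bigl(p+1-a_p(E)\bigr)\bigl(p+1+a_p(E)\bigr).
\]

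Next, I would reduce modulo $p$: the above identity yields $\#\widetilde{E}(\F_{p^2}) \equiv 1 - a_p(E)^2 \pmod{p}$. Hence $p \mid \#\widetilde{E}(\F_{p^2})$ if and only if $a_p(E)^2 \equiv 1 \pmod{p}$, and since $\F_p$ is a field this is equivalent to $a_p(E) \equiv \pm 1 \pmod p$. This establishes the first equivalence.

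Finally, for the second equivalence I would unwind the definition $a_p(E) = 1 + p - \#\widetilde{E}(\F_p)$, which gives $a_p(E) \equiv 1 - \#\widetilde{E}(\F_p) \pmod p$. Therefore $a_p(E) \equiv 1 \pmod p$ if and only if $\#\widetilde{E}(\F_p) \equiv 0 \pmod p$, and $a_p(E) \equiv -1 \pmod p$ if and only if $\#\widetilde{E}(\F_p) \equiv 2 \pmod p$. There is no real obstacle: the whole argument is essentially the factorization identity above together with two reductions mod $p$.
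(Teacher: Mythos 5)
Your proof is correct and follows essentially the same route as the paper: the paper's proof derives the factorization $\#\widetilde{E}(\F_{p^2})=(p+1-a_p)(p+1+a_p)$ by the same argument as in the earlier lemma on $K$-anomalous primes and declares the result immediate, which is exactly the reduction mod $p$ you spell out. Your explicit handling of the final congruences is a fine (and slightly more detailed) filling-in of that last step.
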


\begin{proof}
Arguing as in the proof of Lemma \ref{lemma46}, we find that
\[
\# \widetilde{E}(\F_{p^2})=(p+1-a_p)(p+1+a_p),
\]
from which the result is immediate.
\end{proof}

Denote by $\mathfrak{T}_p$ the set of pairs $\kappa=(a,b)\in \F_p\times \F_p$ such that $\# E_{\kappa}(\F_{p})\equiv 0, 2\pmod{p}$.
Set
\[
b(p):=\begin{cases}\# \mathfrak{T}_p\text{ if } p \text{ is inert in }K,\\
\# \mathfrak{S}_p\text{ otherwise.}
\end{cases}
\]
The proof of the following result is identical to that of \cite[Theorem 4.10]{KR21}.
\begin{theorem}
With notation as above,
\[\limsup_{x\rightarrow \infty} \frac{\#\cE_{3,K}(x)}{\#\cE(x)}\leq \zeta(10)\cdot \frac{b(p)}{p^2}.
\]
\end{theorem}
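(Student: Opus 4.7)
The plan is to mirror the argument of \cite[Theorem 4.10]{KR21}, translating the "anomalous mod $p$" condition used there into the appropriate condition at the primes of $K$ above $p$. By definition, $E \in \cE_{3,K}(x)$ means that $p \mid \alpha_p(E/K) = \prod_{\p \mid p} \#\widetilde{E}(k_\p)[p^\infty]$, i.e., there exists a prime $\p$ of $K$ above $p$ such that $p \mid \#\widetilde{E}(k_\p)$. So my first step is to reduce this to a mod-$p$ condition on the coefficients of the Weierstrass equation \eqref{weier}.

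First, I split into two cases according to the splitting behaviour of $p$ in $K$. If $p$ is non-inert (split or ramified), then $k_\p = \F_p$ for every $\p \mid p$, and the condition becomes $p \mid \#\widetilde{E}(\F_p)$; this is governed by the pairs $(a,b) \in \mathfrak{S}_p$. If $p$ is inert, then $k_\p = \F_{p^2}$ and, by the preceding lemma, the condition $p \mid \#\widetilde{E}(\F_{p^2})$ is equivalent to $\#\widetilde{E}(\F_p) \equiv 0, 2 \pmod{p}$; this is governed by $\mathfrak{T}_p$. In either case, the membership $E_\kappa \in \cE_{3,K}(x)$ depends only on the reduction $\kappa = (A \bmod p, B \bmod p)$, and the number of "bad" reduction classes is exactly $b(p)$ by definition.

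Next, I count Weierstrass equations. Let $\mathcal{W}(x)$ denote the set of integral Weierstrass equations $Y^2 = X^3 + AX + B$ with $\max(|A|^3, |B|^2) \leq x$, and let $\mathcal{W}''(x) \subset \mathcal{W}(x)$ be those whose mod-$p$ reduction lies in the bad set $\mathfrak{S}_p$ or $\mathfrak{T}_p$ (as appropriate). A standard lattice-point count with congruence conditions gives
\[
\#\mathcal{W}(x) \sim 4 x^{5/6}, \qquad \#\mathcal{W}''(x) \sim \frac{b(p)}{p^2} \cdot 4 x^{5/6},
\]
with an error of lower order in $x$. On the other hand, the density of minimal Weierstrass equations (i.e., those with $\gcd(A^3,B^2)$ not divisible by any twelfth power) inside $\mathcal{W}(x)$ is $\prod_\ell (1 - \ell^{-10}) = 1/\zeta(10)$, hence $\#\cE(x) \sim \tfrac{4 x^{5/6}}{\zeta(10)}$.

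To conclude, every $E \in \cE_{3,K}(x)$ arises (via its minimal model) from some Weierstrass equation in $\mathcal{W}''(x)$, so $\#\cE_{3,K}(x) \leq \#\mathcal{W}''(x)$. Dividing,
\[
\limsup_{x \to \infty} \frac{\#\cE_{3,K}(x)}{\#\cE(x)} \leq \limsup_{x \to \infty} \frac{(b(p)/p^2)\cdot 4 x^{5/6}}{4x^{5/6}/\zeta(10)} = \zeta(10) \cdot \frac{b(p)}{p^2}.
\]
The only delicate point is the lattice-point estimate for $\#\mathcal{W}''(x)$ with congruence conditions modulo $p$, together with the sieve for minimality giving the factor $1/\zeta(10)$; both are standard and are handled exactly as in the proof of \cite[Theorem 4.10]{KR21}, so no new ideas are required beyond verifying that the mod-$p$ reduction classes counted by $b(p)$ correctly capture the anomaly condition for $K$ in both the inert and non-inert cases, which was done in the first two steps.
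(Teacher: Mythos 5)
Your proposal is correct and follows essentially the same route as the paper, which simply invokes the argument of \cite[Theorem 4.10]{KR21} after establishing (in the preceding lemma) the inert/non-inert dichotomy that makes $b(p)$ the right count of bad residue classes. The lattice-point count with the mod-$p$ congruence condition and the $1/\zeta(10)$ sieve for minimal Weierstrass equations are exactly the ingredients of that cited proof.
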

Putting it all together, we obtain the following result.
\begin{theorem}
\label{main result varying elliptic curve}
With notation as before,
\[\limsup_{x\rightarrow \infty} \frac{\#\mathcal{Z}^c_{K}(x)}{\#\mathcal{E}(x)}< \limsup_{x\rightarrow \infty} \frac{\#\cE_{1,K}(x)}{\#\cE(x)}+ \sum_{\ell\in \mathfrak{s}_K} \frac{1}{\ell^p}+\zeta(10)\cdot \frac{b(p)}{p^2}.\]
\end{theorem}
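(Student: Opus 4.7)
The plan is to obtain the stated bound by combining the three pieces that the section has already assembled: an Euler characteristic decomposition, the Tamagawa estimate for $\cE_{2,K}$, and the reduction-anomaly estimate for $\cE_{3,K}$. The leading term involving $\cE_{1,K}$ is then left as is, since it concerns Tate--Shafarevich groups and will be analyzed heuristically in the next section.

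First I would invoke Lemma \ref{boringlemma} together with the anticyclotomic Euler characteristic formula \eqref{ecf}. The formula expresses $\chi(\Gamma_K,E[p^\infty])$ as a product of the three factors $\Sh_p(E/K)$, $\tau_p(E/K)$, and $\alpha_p(E/K)^2$, divided by $(\#E(K)[p^\infty])^2$. The key (and elementary) observation is that $\chi(\Gamma_K,E[p^\infty])$ is a non-negative integer power of $p$, so if $p$ divides none of $\Sh_p(E/K)$, $\tau_p(E/K)$, or $\alpha_p(E/K)$, then $\chi(\Gamma_K,E[p^\infty])=1$ and hence, by Lemma \ref{boringlemma}, $E\in\mathcal{Z}_K$. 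Contrapositively, any $E\in\mathcal{Z}_K^c$ must lie in at least one of the sets $\mathcal{E}_{i,K}$, so
\[
\limsup_{x\to\infty}\frac{\#\mathcal{Z}_K^c(x)}{\#\mathcal{E}(x)}\leq \sum_{i=1}^{3}\limsup_{x\to\infty}\frac{\#\mathcal{E}_{i,K}(x)}{\#\mathcal{E}(x)}.
\]

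Next I would substitute the two bounds already proved in this subsection. The previous theorem gives
\[
\limsup_{x\to\infty}\frac{\#\mathcal{E}_{2,K}(x)}{\#\mathcal{E}(x)}<\sum_{\ell\in\mathfrak{s}_K}\frac{1}{\ell^p},
\]
and the theorem immediately before the main result yields
\[
\limsup_{x\to\infty}\frac{\#\mathcal{E}_{3,K}(x)}{\#\mathcal{E}(x)}\leq \zeta(10)\cdot\frac{b(p)}{p^2}.
\]
Adding these two inequalities to the trivially retained term for $i=1$ produces the right-hand side of the statement, and the strict inequality in the $\mathcal{E}_{2,K}$ bound propagates to give the strict inequality in the conclusion.

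There is essentially no obstacle here: the content of the theorem is the Euler characteristic decomposition, and the quantitative work lies in the two previously proved auxiliary theorems. The only point that requires a moment of care is the integrality observation that allows us to conclude $\chi(\Gamma_K,E[p^\infty])=1$ from simultaneous coprimality to $p$ of the factors in \eqref{ecf}; this uses the fact noted after Lemma \ref{lemma24} that the Euler characteristic is a non-negative power of $p$, and that the denominator $\#E(K)[p^\infty]$ cancels against factors in the numerator rather than producing a negative power of $p$. Once that is in place, the theorem is simply a sum of the bounds already established.
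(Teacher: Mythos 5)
Your proposal is correct and follows exactly the route the paper takes: the Euler characteristic decomposition $\limsup \#\mathcal{Z}^c_K(x)/\#\mathcal{E}(x)\leq \sum_{i=1}^3\limsup\#\mathcal{E}_{i,K}(x)/\#\mathcal{E}(x)$ combined with the two previously established bounds for $\cE_{2,K}$ and $\cE_{3,K}$, with the strict inequality inherited from the Tamagawa estimate. The integrality observation you flag (that $\chi(\Gamma_K,E[p^\infty])$ is a non-negative power of $p$, so triviality of the numerator factors forces $\chi=1$) is precisely the justification the paper uses in the analogous cyclotomic lemma.
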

In Section \ref{section: CL heuristics for Sha}, we provide heuristics for the limits $\limsup_{x\rightarrow\infty}\frac{\#\cE_{1,\Q}(x)}{\#\cE(x)}$ and $\limsup_{x\rightarrow\infty}\frac{\#\cE_{1,K}(x)}{\#\cE(x)}$.
The approximate values of $\#\mathfrak{S}_p/p^2$ for the primes $7\leq p< 150$, up to 16 decimal places are noted in Table \ref{tab:2}.
One observes that they seem to be going to $0$ as $p$ gets large, though there is much oscillation in the data.
Furthermore, explicit calculation indicates that $\#\mathfrak{T}_p=2\#\mathfrak{S}_p$.
It was pointed out to us by L.~C.~Washington that this is the case since the curves with $a_p=1$ are quadratic twists of curves for which $a_p=-1$, hence there are the same number of each.
This explains why one has the equality $\#\mathfrak{T}_p=2\#\mathfrak{S}_p$.

\section{Cohen--Lenstra Heuristics for Tate--Shafarevich Groups}
\label{section: CL heuristics for Sha}

In \cite{CL84}, Cohen and Lenstra formulated heuristics for the variation of class groups of number fields.
Delaunay extended their approach to formulate heuristics for Tate--Shafarevich groups for elliptic curves $E_{/\Q}$ ordered by height, see \cite{Del01}.
These heuristics have been refined in \cite{bhargava13modeling} and this is the subject of much interest in arithmetic statistics of elliptic curves.
In this section, we survey such heuristics and explain their consequences to our results in the previous section.

Let $\mathscr{E}$ denote the set of isomorphism classes of elliptic curves defined over $\Q$ with rank $0$.
For $x>0$, set $\mathscr{E}(x)$ to be the subset of $\mathscr{E}$ consisting of $E$ such that $H(E)\leq x$.
Note that there are the following inclusions
\[\mathcal{J}_K\subset \mathcal{J}_{\Q}\subset \mathscr{E}\subset \mathcal{E}.\]
Delaunay's heuristics apply to the set of elliptic curves $\mathscr{E}$.
It is reasonable to expect that the same heuristics should apply to $\mathcal{J}_K$ or $\mathcal{J}_{\Q}$, since the guiding principle is the same and independent of the extra conditions defining $\mathcal{J}_F$.

First we discuss the variation of $\Sh(E/\Q)$ as $E$ varies over $\mathscr{E}$.
For an elliptic curve $E_{/\Q}$, J.~W.~S.~Cassels showed that there is a bilinear alternating pairing
\[
\Sh(E/\Q)\times \Sh(E/\Q)\rightarrow \Q/\Z,
\]
which is non-degenerate if $\Sh(E/\Q)$ is finite.
If $E\in \mathscr{E}$, i.e., $\rank_{\Z}E(\Q)=0$, then it is known that $\Sh(E/\Q)$ is finite.
This motivates the notion of a group of type-$S$.
\begin{definition}
Let $G$ be a finite abelian group.
We say that $G$ is of \emph{type-$S$} if there is a non-degenerate alternating pairing
\[\beta:G\times G\rightarrow \Q/\Z.\]
\end{definition}
If $G$ is a group of type-$S$, then the pairing $\beta$ is unique.
Two groups of type-$S$ are isomorphic if there is an isomorphism which preserves the pairing.
There is a neat description of such groups.
If $G$ is a group of type-$S$, it is easy to show that $G\simeq H\times H$ for a finite abelian group.
Conversely, if $H$ is a finite abelian group and $G\simeq H\times H$, then there is a unique non-degenerate alternating pairing $\beta:G\times G\rightarrow \Q/\Z$.
Given a finite abelian group $G$, set $G_p$ to denote the $p$-torsion subgroup.
The quantity $r_p(G) = \dim_{\F_p} (G_p)$ is known as the \emph{$p$-rank}.
Set $\Aut_S(G)$ to denote the group of automorphisms of $G$ which preserve the pairing $\beta$.

Let $\mathscr{G}$ denote the set of isomorphism classes of groups of type-$S$ and consider a function $F:\mathscr{G}\rightarrow \mathbb{C}$.
There is a natural map
\[
\Sh: \mathscr{E}\rightarrow \mathscr{G}
\]
which sends an elliptic curve $E$ to its Tate--Shafarevich group.
The following proportion is a measure of the function $F\circ \Sh$ on $\mathscr{E}$ on average
\[
M_{\mathscr{E}}(F):=\lim_{x\rightarrow \infty} \frac{\sum_{E\in \mathscr{E}(x)} F(\Sh(E))}{\#\mathscr{E}(x)}.
\]
A priori it is not clear that this limit must exist.
However, it is conjectured to exist in the cases of interest in this paper.
We remark that the average in \cite{Del01} is taken with respect to the conductor and not height
It is expected that the same heuristics apply, regardless (see \cite[footnote on p. 3]{bhargava13modeling}).

The key idea of Cohen and Lenstra is that class groups behave as random finite abelian groups $G$ except that they have to be weighted by $\abs{\Aut(G)}^{-1}$.
The same should be true for Tate--Shafarevich groups of elliptic curves, where the average is taken over all groups of type-$S$.
For $n\in \Z_{\geq 1}$, let $\mathscr{G}(n)$ be the set of isomorphism classes of groups $G\in \mathscr{G}$ with order equal to $n$.
Following Delaunay \cite[Definition 11]{delaunay2007heuristics}, for $\alpha\in \mathbb{R}_{\geq 1}$, define the following density associated to $F$, taken over $\mathscr{G}$
\[
M_{\mathscr{G}}(F,\alpha):=\lim_{x\rightarrow \infty} \frac{\left(\sum_{n\leq x}\sum_{\mathscr{G}(n)}\frac{F(G)\abs{G}^{\alpha}}{\abs{\Aut_S(G)}}\right)}{\left(\sum_{n\leq x}\sum_{\mathscr{G}(n)}\frac{\abs{G}^{\alpha}}{\abs{\Aut_S(G)}}\right)}.
\]
This quantity is expected to be independent of $\alpha$ (as stated in \textit{loc. cit.}) and a choice $\alpha=1$ is made.
Write $M_{\mathscr{G}}(F):=M_{\mathscr{G}}(F,1)$.
\begin{heuristic}[Delaunay \cite{delaunay2007heuristics}]
\label{delaunayheuristic}
For all reasonable functions $F:\mathscr{G}\rightarrow \C$, the limits $M_{\mathscr{E}}(F)$ and $M_{\mathscr{G}}(F)$ converge and
\[M_{\mathscr{E}}(F)=M_{\mathscr{G}}(F).\]
\end{heuristic}

There is an analogous conjecture for elliptic curves $E_{/\Q}$ of rank $1$ and the heuristics in this setting are different.
Since our results apply only to elliptic curves of rank 0, we do not further explain such heuristics here.
We note that the distinction between rank 0 and 1 elliptic curves comes across clearly from the heuristics in \cite{bhargava13modeling}.

Now we return our applications from the previous section.
Consider the function
\[
F_{p-\op{triv}}(G):=\begin{cases} 1\text{ if }G_p=0,\\
0\text{ otherwise.}
\end{cases}
\]
On computing $M_{\mathscr{G}}(F_{p-\op{triv}})$, we have the following expectation.

\begin{theorem}[Delaunay \cite{Del01}]
Assume that the Heuristic \ref{delaunayheuristic} is satisfied for the function $F=F_{p-\op{triv}}$.
Then
\[
\lim_{x\rightarrow \infty} \frac{\#\{E\in \mathscr{E}(x)\mid \Sh(E/\Q)[p]\neq 0\}}{\#\mathscr{E}(x)}=f_0(p),
\]
where $f_0(p)$ is given by
\[f_0(p)=1-\prod_{j=1}^{\infty} \left(1-\frac{1}{p^{2j-1}}\right)=\frac{1}{p}+\frac{1}{p^3}-\frac{1}{p^4}+\frac{1}{p^5}-\frac{1}{p^6}\dots.\]
\end{theorem}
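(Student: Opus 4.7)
The plan is to apply the heuristic to reduce the average over elliptic curves to a purely group-theoretic sum, and then compute this sum using the primary decomposition of type-$S$ groups.

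First, by the Heuristic \ref{delaunayheuristic} applied to $F = F_{p\text{-triv}}$, we have $M_{\mathscr{E}}(F_{p\text{-triv}}) = M_{\mathscr{G}}(F_{p\text{-triv}})$. Since $F_{p\text{-triv}}$ is the indicator of the event $\{\Sh(E/\Q)[p] = 0\}$, the desired limit equals $1 - M_{\mathscr{G}}(F_{p\text{-triv}})$, so it suffices to show
\[
M_{\mathscr{G}}(F_{p\text{-triv}}) \;=\; \prod_{j=1}^\infty \left(1 - \frac{1}{p^{2j-1}}\right).
\]

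Next, I would set up the primary decomposition. Every group $G$ of type-$S$ decomposes canonically as $G = \bigoplus_q G_q$ over primes $q$, where each $q$-Sylow $G_q$ inherits a non-degenerate alternating pairing (hence is itself of type-$S$), and the symplectic automorphism group factors as $\Aut(G) = \prod_q \Aut(G_q)$. Grouping terms in $M_{\mathscr{G}}(F,\alpha)$ by $|G| = \prod_q |G_q|$ turns the sum into an Euler product:
\[
\sum_{G \in \mathscr{G}} \frac{|G|^\alpha}{|\Aut(G)|} \;=\; \prod_{q \text{ prime}} c_q(\alpha), \qquad c_q(\alpha) := \sum_{\substack{H \in \mathscr{G} \\ H \text{ a } q\text{-group}}} \frac{|H|^\alpha}{|\Aut(H)|}.
\]
The condition $G_p = 0$ picks out only the trivial contribution in the $q=p$ factor of the numerator, so the $q \neq p$ factors cancel and we obtain, at $\alpha = 1$,
\[
M_{\mathscr{G}}(F_{p\text{-triv}}) \;=\; \frac{1}{c_p(1)}.
\]

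The main obstacle is the evaluation of $c_p(1)$, which is the nontrivial input from Delaunay's work. Every type-$S$ $p$-group has the shape $H \times H$ for a unique (up to isomorphism) finite abelian $p$-group $H$, indexed by a partition $\lambda = (\lambda_1 \geq \lambda_2 \geq \cdots)$ via $H \cong \bigoplus_i \Z/p^{\lambda_i}$. I would quote (or derive from the explicit description of the symplectic group preserving the canonical pairing on $H \times H$) the identity
\[
\frac{|H \times H|}{|\Aut_S(H \times H)|} \;=\; \prod_i \frac{1}{p^{\lambda_i'(\lambda_i' - 1)} \prod_{k \geq 1}(1 - p^{-2k})^{m_k(\lambda)}},
\]
where $m_k(\lambda)$ counts parts of $\lambda$ equal to $k$ and $\lambda'$ is the conjugate partition (this is what Delaunay computes in detail). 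Summing over all partitions $\lambda$ and invoking a standard partition identity of Macdonald/Hall--Littlewood type, one obtains
\[
c_p(1) \;=\; \prod_{j=1}^\infty \left(1 - \frac{1}{p^{2j-1}}\right)^{-1}.
\]

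Finally, combining the displayed formulas yields $M_{\mathscr{G}}(F_{p\text{-triv}}) = \prod_{j \geq 1}(1 - p^{-(2j-1)})$, so the probability that $\Sh(E/\Q)[p] \neq 0$ equals $1 - \prod_{j \geq 1}(1 - p^{-(2j-1)}) = f_0(p)$, which is the claim. The only non-formal step is the identity $c_p(1) = \prod_j (1 - p^{-(2j-1)})^{-1}$; this is the substantive content of Delaunay's computation and is where the heuristic's specific predictions for Tate--Shafarevich groups (as opposed to, say, Cohen--Lenstra for class groups) ultimately enter.
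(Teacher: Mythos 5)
The paper offers no proof of this statement at all: it is quoted verbatim as a theorem of Delaunay with a citation to \cite{Del01}, so there is no internal argument to compare yours against. Your sketch is, in effect, an outline of Delaunay's own computation (which itself follows the Cohen--Lenstra template), and it arrives at the correct answer: with $c_p(1)=\prod_{j\geq 1}(1-p^{-(2j-1)})^{-1}$ one indeed gets $1-1/c_p(1)^{-1}\cdot c_p(1)^{-1}$ --- more plainly, $1-\prod_{j\geq 1}(1-p^{1-2j})=\tfrac{1}{p}+\tfrac{1}{p^3}-\tfrac{1}{p^4}+\cdots=f_0(p)$, matching the statement.

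Two steps in your outline deserve flagging as genuinely incomplete rather than merely deferred. First, the ``cancellation of the $q\neq p$ Euler factors'' is only formal: since $c_q(1)\approx 1+q^{-1}+\cdots$, the full product $\prod_q c_q(1)$ diverges, so both the numerator and the denominator in the definition of $M_{\mathscr{G}}(F,1)$ are divergent series, and the paper's definition is a limit of ratios of partial sums ordered by $|G|\leq x$. Passing from that limit to $1/c_p(1)$ requires the Dirichlet-series/Tauberian apparatus of Cohen--Lenstra (adapted by Delaunay), not just term-by-term cancellation; this is the same issue one faces for class groups and is resolved in \cite{Del01}, but your writeup should acknowledge that the identity $M_{\mathscr{G}}(F_{p\text{-}\mathrm{triv}})=1/c_p(1)$ is an analytic statement, not an algebraic one. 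Second, the closed form for $|H\times H|/|\Aut_S(H\times H)|$ and the partition identity giving $c_p(1)$ are quoted, and as you correctly say, that is the substantive content of Delaunay's computation. Given that the paper itself treats the whole theorem as a black box, your proposal is a reasonable and structurally correct reconstruction, but it is a proof modulo precisely the two inputs that constitute Delaunay's contribution.
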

In particular, $f_0(2)\approx 0.58$, $f_0(3)\approx 0.36$ and $f_0(5)\approx 0.21$.
These heuristics cannot be checked via numerical computation since there are too many elliptic curves and Tate--Shafarevich groups which appear for large heights.
It is reasonable to expect that the same heuristics should apply to the averages $\limsup_{x\rightarrow\infty}\frac{\#\cE_{1,\Q}(x)}{\#\cE(x)}$ and $\limsup_{x\rightarrow\infty}\frac{\#\cE_{1,K}(x)}{\#\cE(x)}$.
First note that
\[
\limsup_{x\rightarrow\infty}\frac{\# \cE_{1,\Q}(x)}{\# \cE(x)}=\limsup_{x\rightarrow \infty} \frac{\#\{E\in \mathcal{J}_{\Q}(x) \mid \Sh(E/\Q)[p]\neq 0\}}{\#\mathscr{E}(x)}.
\]
The extra $p$-ordinary and $(\dagger)$ conditions defining $\mathcal{J}_{\Q}$ as a subset of $\mathscr{E}$ should be independent of the heuristics for the variation of $\Sh(E/\Q)$.
However, since $\mathcal{J}_{\Q}$ is a subset of $\mathscr{E}$, the above heuristics imply that
\[
\limsup_{x\rightarrow\infty}\frac{\# \cE_{1,\Q}(x)}{\# \cE(x)}\leq f_0(p).
\]
On the other hand,
\[
\limsup_{x\rightarrow\infty}\frac{\# \cE_{1,K}(x)}{\# \cE(x)}=\limsup_{x\rightarrow \infty} \frac{\#\{E\in \mathcal{J}_{K}(x) \mid \Sh(E/K)[p]\neq 0\}}{\#\mathscr{E}(x)},
\]
and it is reasonable to expect that the groups $\Sh(E/K)$ should also obey the Cohen--Lenstra heuristics over $\mathcal{J}_K$.
Therefore, we expect that both limits $\lim_{x\rightarrow\infty}\frac{\#\cE_{1,\Q}(x)}{\#\cE(x)}$ and $\lim_{x\rightarrow\infty}\frac{\#\cE_{1,K}(x)}{\#\cE(x)}$ should exist and be $\leq f_0(p)$.
However, we are reluctant to make this into a conjecture since these expectations cannot be verified numerically.

\section{Indefinite Case}
\label{sec:indefinite} Recall that we decomposed the conductor of $E$ as $N=N^+ N^-$, where $N^+$ (resp. $N^-$) is divisible only by primes which are split (resp. inert) in $K$.
When $N^-$ is divisible by an odd number of primes, we are said to be working in the \textit{definite case}, and when $N^-$ is divisible by an even number of primes, it is called the \textit{indefinite case}.
There is a sharp divide in anticyclotomic Iwasawa theory between these two cases due to the presence of Heegner points in the latter setting.
For the rest of the paper, we focus on the indefinite case.
More specifically, we assume $N^-=1$. This assumption is necessary for some of our arguments (e.g. when applying the results of \cite{HatleyLeiJNT}), but we expect similar results to hold in the more general indefinite setting.

The most significant difference, is that in the indefinite case, the Selmer group $\Sel_{p^\infty}(E/K_\infty)$ is \emph{not always} $\Lambda$-cotorsion (see \cite[Theorem A]{Ber95}).
Thus, many of the arguments we gave in the preceding sections fail, and the corresponding results are often false.
For instance, in Lemma \ref{lemma21} we proved that $\lambda_p(E/K_\infty) \geq \mathrm{rank}_\Z E(K)$ by considering the short exact sequence
\[
0 \rightarrow E(K) \otimes \Qp/\Zp \rightarrow\Sel_{p^\infty}(E/K) \rightarrow \Sh(E/K)[p^\infty] \rightarrow 0.
\]
If $\Sel_{p^\infty}(E/K_\infty)$ is cotorsion, then $\mathrm{corank}_{\Zp} \Sel_{p^\infty}(E/K)$ is finite and the exact sequence implies
\[
\mathrm{corank}_{\Zp} \Sel_{p^\infty}(E/K) \geq \mathrm{rank}_\Z E(K).
\]
However, this argument fails when $\Sel_{p^\infty}(E/K_\infty)$ has positive corank.
In this setting, one is able to characterize the $\lambda$-invariant of the Selmer group $\Sel_{p^\infty}(E/K_\infty)$.
Indeed, we show below in Theorem \ref{thm:indefinite-main} that, under certain hypotheses, we can have $\mathrm{rank}_\Z E(K) =1$ and $\lambda_p(E/K_\infty)=0$.

As mentioned in the discussion following Theorem \ref{thm:van-ord}, there is no known formula for the Euler characteristic of $\Sel_{p^\infty}(E/K_\infty)$ in the indefinite setting.
We circumvent this issue by using recent progress towards the anticyclotomic Iwasawa Main Conjectures made by A.~Burungale--F.~Castella--C.~H.~Kim \cite{BCK} to obtain an Euler characteristic formula for an auxiliary Selmer group (see Section \ref{sec:Selmer-definitions} for the definition).
The $\lambda$-invariant of this auxiliary Selmer group provides an upper bound for the $\lambda$-invariant of $\Sel_{p^\infty}(E/K_\infty)$ (see Lemma \ref{lem:lambda-one-trickl}).
This allows us to proceed with our usual methods for computing Iwasawa invariants on average when $\mathrm{rank}_\Z E(K)=1$.
This argument is quite novel and we expect that this sort of argument could eventually be used to shed light on Iwasawa invariants in various other contexts where the Selmer groups are not $\Lambda$-cotorsion.

\subsection{Indefinite Case: Assumptions}
\label{sec:notation}
For the rest of the paper, we will alter our assumptions on $E$ and $K$.
We consider elliptic curves $E_{/\Q}$ of Mordell--Weil rank $r=1$ with good ordinary reduction at $p$ of square-free conductor $N>3$.
Also, assume throughout that the $p$-primary Tate--Shafarevich group $\Sh(E/K)[p^\infty]$ is finite.
Let $K/\Q$ be an imaginary quadratic extension with group of units $\{ \pm 1\}$.
Let $h_K$ denote the class number of $K$.
We fix embeddings $K \hookrightarrow \mathbb{C}$ and $\overline{\Q} \hookrightarrow \overline{\Q}_p$.
Let $\bar{\rho}_{E,p} \colon G_\Q \rightarrow \GL_2(\F_p)$ denote the residual Galois representation determined by $E[p]$.

\begin{definition}\label{def:admiss}
The triple $(E,K,p)$ is called \emph{admissible} if the following hold:
\begin{enumerate}
\item $p=\p \bar{\p}$ splits in $K$,
\item every prime divisor of $N_E$ splits in $K$,
\item $E$ has good ordinary reduction at $p$,
\item $p \nmid 6N\varphi(N)h_K$,
\item $a_p(E) \not\equiv \pm 1 \pmod p$,
\item $\bar{\rho}_{E,p}$ is surjective,
\item $\bar{\rho}_{E,p}$ is ramified at $\ell$ for each $\ell \mid N_E$.
\end{enumerate}
\end{definition}
\noindent Here, $\varphi$ denotes the Euler totient function and $h_K$ is the class number of $K$.
Later in this section, we fix an elliptic curve $E_{/\Q}$ without complex multiplication and an imaginary quadratic field $K$ in which $p$ splits and all prime divisors $\ell|N_E$ split in $K$.
It is then shown that for $1/2$ the primes $p$, the above conditions are satisfied.

\subsection{Auxiliary Selmer Groups}
\label{sec:Selmer-definitions}
We will now define some auxiliary Selmer groups which are useful in the indefinite case.
These Selmer groups are defined by prescribing the local conditions at the primes above $p$ either more or less strict than those defining the usual Selmer group $\Sel_{p^\infty}(E/K_\infty)$.
We retain the notation from Section \ref{section: background}.

Let $L$ be a number field such that $K \subset L \subset K_\infty$.
For $v \in S_p$ and $\mathcal L_v \in \{ \emptyset, \Gr, 0 \}$, define
\[
\mathcal{J}_{\mathcal{L}_v}(E/L) := \begin{cases}
0 & \text{if}\ \mathcal L_v = \emptyset, \\
J_v(E/L) & \text{if}\ \mathcal L_v = \Gr, \\
H^1\left( L_v, E\right) & \text{if}\ \mathcal L_v = 0.
\end{cases}
\]
Note that when $\mathcal{L}_v=\emptyset$, there is no condition at $v$, hence it is called the empty condition.
The condition $\mathcal{L}_v=\op{Gr}$ is called the Greenberg condition and $\mathcal{L}_v=0$ is the strict condition.
Then for any pair $\mathcal{L}=\{ \mathcal{L}_{\p}, \mathcal{L}_{\bar{\p}} \}$, we define the an associated $p$-primary Selmer group.
For $v|\p$ (resp. $v|\bar{\p}$), let $\mathcal{L}_v=\mathcal{L}_\p$ (resp. $\mathcal{L}_v=\mathcal{L}_{\bar{\p}}$).
Define the $p$-primary $\mathcal{L}$\textit{-Selmer group over} $L$ as follows
\[
\Sel_{{\mathcal{L}},p^\infty}(E/L):=\ker\left\{ H^1\left(K_S/L,E[p^{\infty}]\right)\longrightarrow \left(\bigoplus_{v\in S \setminus S_p} J_v(E/L)\right)\oplus\left( \bigoplus_{v\in S_p} \mathcal{J}_{\mathcal{L}_v}(E/L)\right) \right\} .
\]
Just as before, we then define
\[J_{\mathcal{L}_v}(E/\Kinf):=\varinjlim_{L\subset \Kinf} J_{\mathcal{L}_v}(E/L),\]
and the \emph{$p$-primary $\mathcal{L}$-Selmer group over $\Kinf$} is defined as follows
\[
\Sel_{{\mathcal{L}},p^\infty}(E/K_\infty):=\ker\left\{ H^1\left(K_S/K_\infty,E[p^{\infty}]\right)\longrightarrow \left(\bigoplus_{v\in S \setminus S_p} J_v(E/K_\infty)\right)\oplus \left( \bigoplus_{v\in S_p} \mathcal{J}_{\mathcal{L}_v}(E/K_\infty)\right)\right\}.
\]

\begin{remark}
Some possible choices for $\mathcal{L}$ lead to familiar objects of study:
\begin{enumerate}[(a)]
\item When $\mathcal{L}=\{\Gr,\Gr\}$, this definition recovers the usual $p$-primary Selmer group, and in this case, we omit $\mathcal{L}$ from the notation.
\item When $\mathcal{L}=\{0,0\}$, this definition recovers the \textit{fine} Selmer group (see \cite{CS05}).
\item The Selmer group obtained from $\mathcal{L}=\{\emptyset,0\}$ is, by the Iwasawa Main Conjecture, an algebraic analogue of the $p$-adic $L$-function constructed by M.~Bertolini--H.~Darmon-K.~Prasanna \cite{BDP}.
See, for instance, \cite[Conjecture 1.2]{KobOta} or \cite[Conjecture 2.1]{AC}.
\end{enumerate}
\end{remark}


For each $\mathcal{L}$, write $\X_\cL(E/K_\infty)$ for the Pontryagin dual of $\Sel_{{\mathcal{L}},p^\infty}(E/K_\infty)$.

\begin{proposition}
\label{prop:coranks}
With notation as above,
\[
\rank_\Lambda\X_\cL(E/K_\infty)=
\begin{cases}
0&\text{if }\cL=\{\emptyset,0\},\{\Gr,0\},\\
1&\text{if }\cL=\{\Gr,\Gr\},\{\Gr,\emptyset\}.
\end{cases}
\]
\end{proposition}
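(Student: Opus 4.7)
The plan is to combine two deep inputs from anticyclotomic Iwasawa theory with a Poitou--Tate duality argument.

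First, I would invoke the Heegner point main conjecture in the indefinite setting. Under the admissibility hypotheses of Definition~\ref{def:admiss} (in particular the surjectivity of $\bar{\rho}_{E,p}$ and the ramification conditions at primes $\ell\mid N$), work of Howard and Castella shows that $\Sel_{p^\infty}(E/K_\infty)=\Sel_{\{\Gr,\Gr\},p^\infty}(E/K_\infty)$ has $\Lambda$-corank exactly one, with the $\Lambda$-free part generated by the $\Lambda$-adic anticyclotomic Heegner class. This yields $\rank_\Lambda \X_{\{\Gr,\Gr\}}(E/K_\infty)=1$.

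Next, I would invoke the anticyclotomic main conjecture for the BDP $p$-adic $L$-function, as proven in the admissible setting by Burungale--Castella--Kim. This identifies the characteristic ideal of $\X_{\{\emptyset,0\}}(E/K_\infty)$ with a nonzero power of the BDP element; in particular $\X_{\{\emptyset,0\}}$ is $\Lambda$-torsion, so $\rank_\Lambda \X_{\{\emptyset,0\}}=0$. The inclusion $\Sel_{\{\Gr,0\},p^\infty}\hookrightarrow\Sel_{\{\emptyset,0\},p^\infty}$ (tightening the condition at $\p$ from $\emptyset$ to $\Gr$) dualizes to a surjection $\X_{\{\emptyset,0\}}\twoheadrightarrow\X_{\{\Gr,0\}}$, and since any $\Lambda$-quotient of a torsion module is torsion, $\rank_\Lambda \X_{\{\Gr,0\}}=0$.

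For the remaining case $\mathcal{L}=\{\Gr,\emptyset\}$, I would apply Iwasawa-theoretic Poitou--Tate global duality. Under Tate local duality, $\Gr$ is self-dual (the Kummer subgroup is its own annihilator), while $\emptyset$ and $0$ are dual, so $\{\Gr,\emptyset\}$ and $\{\Gr,0\}$ form a pair of dual Selmer structures. Greenberg's formula then yields
\[
\corank_\Lambda \Sel_{\{\Gr,\emptyset\},p^\infty} - \corank_\Lambda \Sel_{\{\Gr,0\},p^\infty} = \sum_{v\mid p}\left(\corank_\Lambda L_v - \tfrac{1}{2}\corank_\Lambda H^1(K_{\infty,v},E[p^\infty])\right),
\]
where $L_v\subset H^1(K_{\infty,v},E[p^\infty])$ is the local subgroup corresponding to $\{\Gr,\emptyset\}$. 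In the good ordinary split case, the local Euler--Poincar\'e characteristic gives $\corank_\Lambda H^1(K_{\infty,v},E[p^\infty])=2$ at each $v\mid p$; the Kummer subgroup at $\p$ has $\Lambda$-corank $1$, while the full local cohomology at $\bar{\p}$ has corank $2$. The right-hand side is therefore $(1-1)+(2-1)=1$, and combined with the previous step this gives $\rank_\Lambda \X_{\{\Gr,\emptyset\}}=1$.

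The main obstacle is the careful bookkeeping in the Greenberg--Wiles/Poitou--Tate formula in the Iwasawa setting, including verifying that the local contributions at primes $v\nmid p$ vanish --- here the admissibility hypotheses (surjectivity of $\bar{\rho}_{E,p}$ and ramification at each $\ell\mid N$) are used to ensure that the relevant auxiliary cohomology groups are $\Lambda$-cotorsion, so that the global $\H^2$ and $H^0$ terms in the nine-term Poitou--Tate sequence do not contribute to the $\Lambda$-rank calculation. Invoking the Heegner and BDP main conjectures as black boxes relies on checking their hypotheses, but these are precisely captured by the admissibility of the triple $(E,K,p)$.
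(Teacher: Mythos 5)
Your argument is correct, and it essentially reconstructs the content of the references that the paper's proof merely cites: the published proof is a one-line appeal to \cite[Corollary 3.8]{HL-MRL}, which in turn rests on Howard's Corollary 3.3.4 (your input for $\cL=\{\Gr,\Gr\}$) and on duality computations from Castella's work (your Poitou--Tate step for $\{\Gr,\emptyset\}$ and the surjection $\X_{\{\emptyset,0\}}\twoheadrightarrow\X_{\{\Gr,0\}}$). Your local bookkeeping checks out: since $p$ splits in $K$, each $v\mid p$ has $\corank_\Lambda H^1(K_{\infty,v},E[p^\infty])=2$ with the Kummer image a corank-one maximal isotropic subgroup, the primes $v\nmid p$ contribute $\Lambda$-cotorsion local cohomology, and $\{\Gr,\emptyset\}$ and $\{\Gr,0\}$ are indeed dual Selmer structures, so the Greenberg--Wiles corank identity yields the difference $(1-1)+(2-1)=1$. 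The one place where you depart from the cited route is in establishing $\rank_\Lambda\X_{\{\emptyset,0\}}=0$: you invoke the Burungale--Castella--Kim main conjecture, whereas Castella (and hence \cite{HL-MRL}) deduce the torsionness directly from Howard's rank-one theorem by the same global-duality formalism, together with the non-triviality of the localization of the $\Lambda$-adic Heegner class at the primes above $p$. Citing \cite{BCK} as a black box is formally admissible, since torsionness is part of the statement of their theorem, but their proof presupposes exactly this corank computation; the duality route is the one that avoids any appearance of circularity and is the argument the paper actually has in mind.
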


\begin{proof}
As noted in \cite[Corollary 3.8]{HL-MRL} this follows from \cite[Corollary 3.3.4]{howard} and results in \cite{Castella}.
\end{proof}

For each $\cL$, write $\mu_{p,\cL}(E)$ for the corresponding $\mu$-invariant, and similarly for the $\lambda$-invariants.
In many cases, these $\mu$-invariants vanish.

\begin{proposition}
\label{prop:mu-vanishes}
Suppose that the triple $(E,K,p)$ is admissible in the sense of Definition \ref{def:admiss}.
If
\[
\mathcal{L} \in \{\{\emptyset,0\}, \{\Gr,0\}, \{\emptyset,\Gr\}, \{\Gr,\Gr\} \},
\]
then $\mu_{p,\cL}(E)=0$.
\end{proposition}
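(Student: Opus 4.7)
The plan is to first establish the vanishing in the case $\mathcal{L}=\{\emptyset,0\}$, whose Selmer group is the algebraic counterpart of the Bertolini--Darmon--Prasanna $p$-adic $L$-function $\Lp^{\mathrm{BDP}}$. Under the admissibility hypotheses, the $\mu$-invariant of $\Lp^{\mathrm{BDP}}$ vanishes by a theorem of Hsieh---the surjectivity and ramification conditions on $\bar\rho_{E,p}$ together with the Heegner-type splitting of every prime divisor of $N$ in $K$ supply precisely the needed hypotheses. Combining this with the anticyclotomic Iwasawa Main Conjecture for the $\{\emptyset,0\}$-Selmer group, established in sufficient generality by Burungale--Castella--Kim, yields $\mu_{p,\{\emptyset,0\}}(E)=0$.

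For the remaining three cases I would propagate the vanishing via comparison sequences obtained by varying the local condition at one prime above $p$. Given $\mathcal{L},\mathcal{L}'$ differing only at a single $v\in\{\p,\bar\p\}$ with $\mathcal{J}_{\mathcal{L}_v}\subseteq \mathcal{J}_{\mathcal{L}_v'}$, the definitions yield an exact sequence
\[
0\to \Sel_{\mathcal{L}',p^\infty}(E/K_\infty)\to \Sel_{\mathcal{L},p^\infty}(E/K_\infty) \to Q_v \to 0,
\]
where $Q_v$ embeds into the local cohomology quotient $\mathcal{J}_{\mathcal{L}_v}(E/K_\infty)/\mathcal{J}_{\mathcal{L}_v'}(E/K_\infty)$. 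Under good ordinary reduction and the splitting $p=\p\bar\p$, each such local quotient at a prime of $K_\infty$ above $p$ is a cofinitely generated $\Lambda$-module with $\mu$-invariant zero; this is a purely local assertion that reduces, via Kummer theory and the formal group filtration, to classical results of Coates--Greenberg on ramified $\Z_p$-extensions. Additivity of $\mu$ in short exact sequences of cofinitely generated $\Lambda$-modules (which holds regardless of whether the modules are cotorsion) then gives $\mu_{p,\mathcal{L}}(E)=\mu_{p,\mathcal{L}'}(E)$.

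Starting from $\mu_{p,\{\emptyset,0\}}(E)=0$: comparing at $\p$ yields $\mu_{p,\{\Gr,0\}}(E)=0$; comparing $\{\Gr,0\}$ with $\{\Gr,\Gr\}$ at $\bar\p$ yields $\mu_{p,\{\Gr,\Gr\}}(E)=0$; and comparing $\{\emptyset,0\}$ with $\{\emptyset,\Gr\}$ at $\bar\p$ yields $\mu_{p,\{\emptyset,\Gr\}}(E)=0$. The main obstacle is the first step: the combination of the BCK main conjecture with Hsieh's $\mu$-vanishing for $\Lp^{\mathrm{BDP}}$ is a deep input, and correctly aligning the normalizations between the algebraic Selmer group and the analytic $L$-function requires care. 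Once that is in hand, the propagation amounts to careful bookkeeping with local cohomology at the split primes above $p$, which is straightforward under the admissibility assumptions.
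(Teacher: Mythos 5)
The paper does not prove this proposition; it simply cites \cite[Theorem 4.5]{HatleyLeiJNT}, so your write-up is being measured against the strategy in that reference and the surrounding literature (Castella, Hatley--Lei, Burungale--Castella--Kim). Your overall route is the right one and is essentially the standard argument: establish $\mu_{p,\{\emptyset,0\}}(E)=0$ from the $\mu=0$ theorem of Hsieh/Burungale for the BDP $p$-adic $L$-function together with the (relevant divisibility in the) main conjecture of \cite{BCK} (noting that the comparison takes place over the unramified extension of $\Lambda$, where $\mu$-invariants are unchanged), and then propagate to the other local conditions by comparing Selmer groups that differ at a single prime above $p$.

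There are, however, two points in the propagation step that are wrong or incomplete as stated. First, $\mu$ is \emph{not} additive in arbitrary short exact sequences of cofinitely generated $\Lambda$-modules: the sequence $0\to\Lambda\xrightarrow{\times p}\Lambda\to\Lambda/p\to 0$ has $\mu$-invariants $0$, $0$, $1$. Additivity does hold when the submodule is torsion (as in \cite[Proposition 2.1]{HL-MRL}), and what you actually need is the two correct implications: (i) if the sub and the quotient of a short exact sequence of finitely generated $\Lambda$-modules both have $\mu=0$, so does the middle term (look at $p$-torsion); and (ii) a quotient of a \emph{torsion} module with $\mu=0$ (equivalently, one finitely generated over $\Z_p$) again has $\mu=0$. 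Your step from $\{\emptyset,0\}$ to $\{\Gr,0\}$ uses (ii) and is fine precisely because $\X_{\{\emptyset,0\}}(E/K_\infty)$ is torsion by Proposition \ref{prop:coranks}; the steps relaxing the condition at $\bar{\p}$ use (i). Second, for those latter steps the error term $Q_v$ is only a $\Lambda$-submodule of the divisible local module $\ker\bigl(H^1(K_{\infty,\bar{\p}},E[p^\infty])\to H^1(K_{\infty,\bar{\p}},E)\bigr)$, and divisibility of the ambient module only tells you $Q_v^\vee$ is a quotient of a $p$-torsion-free $\Lambda$-module of rank one --- such quotients can still have positive $\mu$ (e.g.\ $\Lambda\twoheadrightarrow\Lambda/p$). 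To close this you need either the $\Lambda$-cofreeness of the local module in the split ordinary case (Greenberg's local computations) combined with the corank bookkeeping of Proposition \ref{prop:coranks}, which forces $Q_v^\vee$ to have $\Lambda$-rank one and hence to be free, or a Poitou--Tate argument identifying the cokernel of localization. With these repairs the argument is complete and agrees with the strategy of the cited theorem.
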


\begin{proof}
See \cite[Theorem 4.5]{HatleyLeiJNT}.
\end{proof}

\begin{lemma}
\label{lem:lambda-one-trickl}
Suppose the the triple $(E,K,p)$ is admissible in the sense of Definition \ref{def:admiss}.
Then $\lambda_p(E) \leq \lambda_{p,\{ \emptyset,0\}}(E)$.
\end{lemma}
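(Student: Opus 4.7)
Plan. I plan to prove the bound by bridging the two dual Selmer groups through the intermediate module $\X_{\{\Gr,0\}}(E/K_\infty)$, using the fact that, at each prime above $p$, the local condition $0$ is strictly finer than $\Gr$ and $\Gr$ is strictly finer than $\emptyset$. This produces the pair of containments
\[
\Sel_{\{\Gr,0\},p^\infty}(E/K_\infty) \subset \Selp \quad \text{and} \quad \Sel_{\{\Gr,0\},p^\infty}(E/K_\infty) \subset \Sel_{\{\emptyset,0\},p^\infty}(E/K_\infty),
\]
each of which, upon dualizing, yields a short exact sequence that can be analyzed separately to produce divisibilities of characteristic ideals.

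First, dualizing the second containment gives
\[
0 \to Q_2^\vee \to \X_{\{\emptyset,0\}}(E/K_\infty) \to \X_{\{\Gr,0\}}(E/K_\infty) \to 0,
\]
and since both outer terms are $\Lambda$-torsion by Proposition \ref{prop:coranks}, so is $Q_2^\vee$, and multiplicativity of characteristic ideals immediately yields $\mathrm{char}_\Lambda \X_{\{\Gr,0\}}(E/K_\infty)$ divides $\mathrm{char}_\Lambda \X_{\{\emptyset,0\}}(E/K_\infty)$.

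Second, dualizing the first containment gives the more delicate sequence
\[
0 \to Q_1^\vee \to \X_{\{\Gr,\Gr\}}(E/K_\infty) \to \X_{\{\Gr,0\}}(E/K_\infty) \to 0,
\]
in which the kernel $Q_1^\vee$ has $\Lambda$-rank $1$ by Proposition \ref{prop:coranks} and rank additivity. The key structural observation is that the quotient $Q_1 = \Selp / \Sel_{\{\Gr,0\},p^\infty}(E/K_\infty)$ embeds via restriction at $\bar\p$ into the Kummer cokernel $E(K_{\infty,\bar\p}) \otimes \Qp/\Zp$, whose Pontryagin dual, by Mazur's analysis of the formal group of an ordinary elliptic curve in the (unique) $\Zp$-extension of $\Qp$, is a free $\Lambda$-module of rank $1$. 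Consequently $Q_1^\vee$, being a rank-$1$ quotient of a free rank-$1$ $\Lambda$-module, must be isomorphic to $\Lambda$ itself and is therefore $\Lambda$-torsion-free. Restricting the surjection $\X_{\{\Gr,\Gr\}}(E/K_\infty) \twoheadrightarrow \X_{\{\Gr,0\}}(E/K_\infty)$ to the $\Lambda$-torsion submodule then gives an injection $\X_{\{\Gr,\Gr\}}(E/K_\infty)^{\mathrm{tors}} \hookrightarrow \X_{\{\Gr,0\}}(E/K_\infty)$, which yields $\mathrm{char}_\Lambda \X_{\{\Gr,\Gr\}}(E/K_\infty)^{\mathrm{tors}}$ divides $\mathrm{char}_\Lambda \X_{\{\Gr,0\}}(E/K_\infty)$.

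Chaining the two divisibilities and comparing the degrees of the distinguished polynomial factors then yields the desired inequality $\lambda_p(E) \leq \lambda_{p,\{\emptyset,0\}}(E)$. The hardest part will be the local claim that $(E(K_{\infty,\bar\p}) \otimes \Qp/\Zp)^\vee$ is a free $\Lambda$-module of rank one; this is a standard consequence of Mazur's classical local computation for the formal group of $E$, but it is where the admissibility hypotheses (in particular, good ordinary reduction at $p = \p\bar\p$, and the mild condition $a_p(E) \not\equiv \pm 1 \pmod p$) play a decisive role.
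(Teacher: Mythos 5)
Your proposal is correct, and its first half (dualizing $\Sel_{\{\Gr,0\},p^\infty}\hookrightarrow \Sel_{\{\emptyset,0\},p^\infty}$ to get a surjection of torsion modules and hence $\lambda_{p,\{\Gr,0\}}(E)\leq\lambda_{p,\{\emptyset,0\}}(E)$) coincides with the paper's. Where you genuinely diverge is in proving $\lambda_p(E)\leq\lambda_{p,\{\Gr,0\}}(E)$: the paper never compares $\X_{\{\Gr,\Gr\}}$ with $\X_{\{\Gr,0\}}$ directly, but instead invokes the duality-type equality $\lambda_{p,\{\emptyset,\Gr\}}(E)=\lambda_{p,\{\Gr,0\}}(E)$ from \cite[Lemma 2.3]{Castella} and then dualizes $\Sel_{p^\infty}\hookrightarrow\Sel_{\{\emptyset,\Gr\},p^\infty}$, where the kernel is torsion for free since both duals have $\Lambda$-rank $1$ by Proposition \ref{prop:coranks}. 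You instead dualize $\Sel_{\{\Gr,0\},p^\infty}\hookrightarrow\Sel_{p^\infty}$ and must show the resulting rank-one kernel $Q_1^\vee$ is $\Lambda$-torsion-free, which you do by embedding $Q_1$ into the local Kummer image at $\bar\p$ and using that its dual is $\Lambda$-free of rank $[K_{\bar\p}:\Q_p]=1$ (Coates--Greenberg/Mazur for a good ordinary curve over a ramified $\Z_p$-extension of $\Q_p$; admissibility gives $p\nmid h_K$, so $\bar\p$ is totally ramified in $K_\infty$ and there is a single prime above it, which is what makes ``free of rank exactly one'' literally true and forces $Q_1^\vee\cong\Lambda$). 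So your route trades the global input (Castella's Poitou--Tate/functional-equation lemma) for a local one (deeply ramified local theory); both are legitimate, yours is arguably more self-contained and makes visible exactly where the positive $\Lambda$-corank of $\Selp$ is absorbed, while the paper's is shorter given the literature it is already citing. The only point to tighten is the local freeness claim, which you correctly flag as the crux: it follows from good ordinary reduction and ramification of $K_{\infty,\bar\p}/\Q_p$ (the hypothesis $a_p\not\equiv\pm1\pmod p$ is not actually what drives it), and you should cite it precisely rather than gesture at ``Mazur's classical local computation.''
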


\begin{proof}
By dualizing the natural inclusion $\Sel_{\{\Gr,0\},p^\infty}(E/K_\infty) \hookrightarrow \Sel_{\{\emptyset,0\},p^\infty}(E/K_\infty)$, we obtain a surjective map
\[
\mathcal{X}_{\{\emptyset,0\}}(E) \twoheadrightarrow \mathcal{X}_{\{\Gr,0\}}(E).
\]
Since each of these groups is $\Lambda$-torsion by Proposition \ref{prop:coranks}, the kernel of this map must also be $\Lambda$-torsion.
Since $\lambda$-invariants are non-negative and are additive in short exact sequences where the first term is a torsion module (see for example, \cite[Proposition 2.1]{HL-MRL}), we deduce that $\lambda_{p,\{ \Gr,0\}}(E)\leq \lambda_{p,\{ \emptyset,0\}}(E)$.
But it has been shown in \cite[Lemma 2.3]{Castella} that $\lambda_{p,\{ \emptyset,\Gr\}}(E) = \lambda_{p,\{ \Gr,0\}}(E)$.
Therefore,
\[
\lambda_{p,\{ \emptyset,\Gr\}}(E) = \lambda_{p,\{ \Gr,0\}}(E)\leq \lambda_{p,\{ \emptyset,0\}}(E).
\]
Upon dualizing the natural inclusion $\Sel_{p^\infty}(E/K_\infty) \hookrightarrow \Sel_{\{\emptyset,\Gr\},p^\infty}(E/K_\infty)$, we obtain
\begin{equation}\label{eq:gr-empty-to-gr-gr}
0 \rightarrow C \rightarrow \mathcal{X}_{\{\emptyset,\Gr\}}(f) \rightarrow \mathcal{X}(f) \rightarrow 0,
\end{equation}
where $C$ is a torsion $\Lambda$-module.
Applying the aforementioned result on additivity of the $\lambda$-invariant (\cite[Proposition 2.1]{HL-MRL}) yields $\lambda_p(E)\leq \lambda_{p,\{ \emptyset,\Gr\}}(E)$.
This completes the proof of the lemma.
\end{proof}

\subsection{A criterion for the triviality of \texorpdfstring{$\lambda_p(E/K_\infty)$}{} }
Recall from Proposition \ref{prop:mu-vanishes} that our running hypotheses imply that $\mu_{p}(E/K_\infty)=0$.
We will now establish a criterion for showing that $\lambda_{p}(E/K_\infty)=0$ as well. 
This criterion relies on the deep connection between $\Sel_{\{\emptyset,0\},p^\infty}(E/K_\infty)$ and the Bertolini--Darmon--Prasanna $p$-adic $L$-function of \cite{BDP}. Recall that in this setting, the main conjecture is proven in \cite{BCK}, and as a result, one has a formula for the constant term of the characteristic element of the Selmer group $\Sel_{\{\emptyset,0\},p^\infty}(E/K_\infty)$.

\begin{proposition}
\label{prop:indefinite-main-conjecture}
Assume that $\Sh(E/K)[p^\infty]$ is finite and that $(E,K,p)$ is admissible.
Let $f_{\{\emptyset,0\}}(T)$ denote a generator of the characteristic ideal of the torsion module $\mathcal{X}_{\{\emptyset,0\}}(E/K_\infty)$, viewed as an element of $\Z_p\llbracket T\rrbracket$.
Then
\[
f_{\{\emptyset,0\}}(0) \sim \left(\frac{1-a_p+p}{p}\cdot \log_{\omega_E}(P)\right)^2.
\]
Here $\log_{\omega_E}$ is the $p$-adic logarithm associated to the N\'{e}ron differential $\omega_E$, and $P \in E(K)$ is a point of infinite order given by the Gross--Zagier formula.
\end{proposition}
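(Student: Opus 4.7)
\medskip

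\noindent\emph{Proof proposal.} The plan is to reduce the statement to the equality of characteristic ideals furnished by the Iwasawa Main Conjecture in the indefinite setting, and then to evaluate both sides at the trivial character using the Bertolini--Darmon--Prasanna explicit reciprocity law. First, I would invoke the main result of Burungale--Castella--Kim \cite{BCK}, whose hypotheses are met under our admissibility assumption on $(E,K,p)$. Namely, the characteristic ideal of $\mathcal{X}_{\{\emptyset,0\}}(E/K_\infty)$, viewed inside $\Z_p\llbracket T\rrbracket$ via the isomorphism $\Lambda\simeq\Z_p\llbracket T\rrbracket$, equals the ideal generated by the square of the Bertolini--Darmon--Prasanna $p$-adic $L$-function $\mathcal{L}_p^{\mathrm{BDP}}(E/K)\in\Lambda$ attached to $E/K$, i.e.
\[
\bigl(f_{\{\emptyset,0\}}(T)\bigr)\;=\;\bigl(\mathcal{L}_p^{\mathrm{BDP}}(E/K)(T)\bigr)^2.
\]
In particular, the constant terms satisfy $f_{\{\emptyset,0\}}(0)\sim \mathcal{L}_p^{\mathrm{BDP}}(E/K)(0)^2$ up to a $p$-adic unit. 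Here the finiteness of $\Sh(E/K)[p^\infty]$ and the vanishing of $\mu_{p,\{\emptyset,0\}}(E)$ established in Proposition~\ref{prop:mu-vanishes} guarantee that $f_{\{\emptyset,0\}}(T)$ is a nonzero element of $\Z_p\llbracket T\rrbracket$ whose $T=0$ specialization makes sense.

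Next, I would apply the Bertolini--Darmon--Prasanna explicit reciprocity law \cite{BDP} to evaluate $\mathcal{L}_p^{\mathrm{BDP}}(E/K)(0)$. Because the trivial character of $\Gamma$ lies outside the interpolation range of the BDP $p$-adic $L$-function, this value is computed not via special values of the complex $L$-function, but via a Coleman-type formula expressing it as the image, under the dual exponential paired with the N\'eron differential $\omega_E$, of a Heegner class. Concretely, the upshot is a formula of the shape
\[
\mathcal{L}_p^{\mathrm{BDP}}(E/K)(0)\;\sim\;\frac{1-a_p+p}{p}\cdot\log_{\omega_E}(P),
\]
where the Euler-type factor $(1-a_p+p)/p$ arises as the product of the removed Euler factors $(1-\alpha_p^{-1})(1-\beta_p^{-1})$ at the two primes above $p$ (here $\alpha_p,\beta_p$ are the roots of $X^2-a_pX+p$, so $(1-\alpha_p^{-1})(1-\beta_p^{-1})=(\alpha_p\beta_p-\alpha_p-\beta_p+1)/(\alpha_p\beta_p)=(p-a_p+1)/p$), and $P\in E(K)$ is the point of infinite order produced from a Heegner divisor on the relevant Shimura curve via the Gross--Zagier formula. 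Squaring this identity and combining with Step~1 yields the desired expression for $f_{\{\emptyset,0\}}(0)$.

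The main obstacle in writing this out carefully is not the outline but rather the bookkeeping of normalizations: one must verify that the normalization of $\mathcal{L}_p^{\mathrm{BDP}}$ used in the formulation of the main conjecture in \cite{BCK} matches the one for which the BDP explicit reciprocity law yields precisely the Euler factor $(1-a_p+p)/p$ and precisely the Heegner point $P$ normalized by the Gross--Zagier formula. Under the admissibility hypotheses, these normalizations are compatible and all auxiliary factors (periods, Tamagawa-type adjustments at primes dividing $N$) are $p$-adic units, so they are absorbed by the $\sim$ symbol. This justifies both the well-definedness of $f_{\{\emptyset,0\}}(0)$ as a nonzero element up to $\Z_p^\times$ and the claimed final equality.
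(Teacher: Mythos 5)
Your proposal follows essentially the same route as the paper, whose proof simply cites \cite[Proposition A.3 and Theorem A.4]{BCK} — i.e., the anticyclotomic main conjecture identifying the characteristic ideal of $\mathcal{X}_{\{\emptyset,0\}}(E/K_\infty)$ with the square of the BDP $p$-adic $L$-function, together with the evaluation of that $L$-function at the trivial character giving $\frac{1-a_p+p}{p}\log_{\omega_E}(P)$. The one concrete detail you subsume under ``normalizations'' that the paper makes explicit is that BCK assume $E$ is optimal in its isogeny class; since $N$ is square-free, passing to an isogenous curve rescales $\log_{\omega_E}(P)$ by the Manin constant, which is a $p$-adic unit, so the conclusion is unaffected.
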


\begin{proof}
Since $(E,K,p)$ is admissible, the hypotheses of \cite[Proposition A.3 and Theorem A.4]{BCK} are satisfied; hence the result follows.
We note that in \textit{loc. cit.} $E$ is assumed to be an optimal curve in its isogeny class, but since $N_E$ is square-free, replacing $E$ with any isogenous curve scales the value of $\log_{\omega_E}(P)$ by the corresponding Manin constant, and this is a $p$-adic unit by \cite[Corollary 4.1]{Mazur-RationalIsog}.
\end{proof}

\begin{theorem}\label{thm:indefinite-main}
Let $E_{/\Q}$ be an elliptic curve, $K$ an imaginary quadratic field whose unit group is $\{\pm 1\}$ and $p$ an odd prime.
Furthermore, assume that the following conditions are satisfied:
\begin{enumerate}
 \item $\rank_{\mathbb{Z}}E(K)=1$ and $\Sh(E/K)[p^\infty]$ is finite,
 \item $(E,K,p)$ is admissible in the sense of Definition \ref{def:admiss}.
\end{enumerate}Let $P \in E(K)$ be a point of infinite order.
Then $\mu_p(E/K_\infty)=0$ and $\lambda_p(E/K_\infty)=0$ if and only if $\log_{\omega_E}(P)/p$
is a $p$-adic unit.
\end{theorem}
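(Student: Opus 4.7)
The overall strategy will be to sidestep the fact that $\Sel_{p^\infty}(E/K_\infty)$ is not $\Lambda$-cotorsion by transferring the question to the auxiliary Selmer group $\Sel_{\{\emptyset,0\},p^\infty}(E/K_\infty)$, which is $\Lambda$-cotorsion, has vanishing $\mu$-invariant, and whose characteristic element has a completely explicit constant term by Proposition \ref{prop:indefinite-main-conjecture}. First, Proposition \ref{prop:mu-vanishes} applied to $\mathcal{L}=\{\Gr,\Gr\}$ immediately gives $\mu_p(E/K_\infty)=0$ under the admissibility hypothesis, so the theorem reduces to characterizing when $\lambda_p(E/K_\infty)=0$.

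Next, I would analyze $\lambda_{p,\{\emptyset,0\}}(E)$ carefully. Since $\mu_{p,\{\emptyset,0\}}(E)=0$ by Proposition \ref{prop:mu-vanishes}, the characteristic element $f_{\{\emptyset,0\}}(T)$ is a distinguished polynomial, so (exactly as in Proposition \ref{prop36}) $\lambda_{p,\{\emptyset,0\}}(E)=0$ if and only if $f_{\{\emptyset,0\}}(T)=1$, which in turn holds if and only if $f_{\{\emptyset,0\}}(0)\in\Zp^\times$. Proposition \ref{prop:indefinite-main-conjecture} then gives
\[
f_{\{\emptyset,0\}}(0)\sim\left(\frac{1-a_p+p}{p}\cdot\log_{\omega_E}(P)\right)^{2}.
\]
Admissibility forces $a_p\not\equiv 1\pmod p$, hence $1-a_p+p\equiv 1-a_p\pmod p$ is a $p$-adic unit; since $p$ is odd, squaring preserves the unit property. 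This yields the clean equivalence
\[
\lambda_{p,\{\emptyset,0\}}(E)=0\ \Longleftrightarrow\ \log_{\omega_E}(P)/p\in\Zp^\times.
\]

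Finally, I would transfer this equivalence back to $\lambda_p(E/K_\infty)$. One direction is immediate from Lemma \ref{lem:lambda-one-trickl}: if $\lambda_{p,\{\emptyset,0\}}(E)=0$, then $\lambda_p(E/K_\infty)=0$. Combined with $\mu_p(E/K_\infty)=0$ from Step 1, this gives the ``if'' direction of the theorem.

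The main obstacle is the reverse implication $\lambda_p(E/K_\infty)=0\Rightarrow\lambda_{p,\{\emptyset,0\}}(E)=0$, since Lemma \ref{lem:lambda-one-trickl} only provides an inequality. My plan is to upgrade the inequality of Lemma \ref{lem:lambda-one-trickl} to an equality by carefully tracking the torsion kernel $C$ and the local terms at the primes above $p$ appearing in the exact sequence \eqref{eq:gr-empty-to-gr-gr} and in Castella's identification of $\lambda_{p,\{\emptyset,\Gr\}}$ with $\lambda_{p,\{\Gr,0\}}$; under the admissibility hypotheses (in particular $E(K)[p]=0$ and $a_p\not\equiv\pm 1\pmod p$, which trivializes the relevant local Euler factors), these correction terms should vanish, forcing $\lambda_p(E/K_\infty)=\lambda_{p,\{\emptyset,0\}}(E)$. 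Alternatively, one may invoke the full anticyclotomic Heegner point main conjecture of \cite{BCK} directly, which under admissibility identifies the characteristic ideal of $\mathcal{X}(E/K_\infty)_{\mathrm{tors}}$ with that of $\mathcal{X}_{\{\emptyset,0\}}(E/K_\infty)$ up to units, immediately yielding the equality of $\lambda$-invariants and completing the proof.
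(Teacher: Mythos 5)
Your reduction to the auxiliary Selmer group is exactly the paper's: $\mu_p(E/K_\infty)=0$ comes from Proposition \ref{prop:mu-vanishes}; Proposition \ref{prop:indefinite-main-conjecture} together with $a_p\not\equiv 1\pmod{p}$ shows that $f_{\{\emptyset,0\}}(0)$ is a $p$-adic unit precisely when $\log_{\omega_E}(P)/p$ is; since $f_{\{\emptyset,0\}}(T)$ is a power of $p$ times a distinguished polynomial, this is equivalent to $f_{\{\emptyset,0\}}(T)=1$, i.e.\ to $\lambda_{p,\{\emptyset,0\}}(E)=\mu_{p,\{\emptyset,0\}}(E)=0$; and Lemma \ref{lem:lambda-one-trickl} then yields the ``if'' direction. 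Up to that point your write-up is correct and coincides with the published proof, whose entire remaining justification is the sentence ``the result now follows from Lemma \ref{lem:lambda-one-trickl}.''

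You are right that the converse implication is the delicate point, and you are more candid about it than the paper is; but neither of your proposed repairs is sound. The claim that \cite{BCK} identifies $\Char\left(\X(E/K_\infty)_{\tor}\right)$ with $\Char\left(\X_{\{\emptyset,0\}}(E/K_\infty)\right)$ up to units is not what is proved there: the standard comparison of Perrin-Riou's Heegner-point main conjecture with the BDP-type main conjecture shows that these ideals differ by the \emph{square} of the characteristic ideal of a local cokernel at $\bar{\p}$ (measuring the failure of the global Iwasawa classes to fill the singular quotient of local Iwasawa cohomology there) --- which is exactly why Lemma \ref{lem:lambda-one-trickl} is only an inequality. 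The discrepancy is already visible at $T=0$: the constant term of $\Char(\X_{\tor})$ is governed by $\#\Sh(E/K)[p^\infty]$, Tamagawa numbers and the Heegner index, whereas $f_{\{\emptyset,0\}}(0)\sim\left(\frac{1-a_p+p}{p}\log_{\omega_E}(P)\right)^2$ involves the $p$-adic logarithm of a global point; these are genuinely different invariants. For the same reason your first repair is circular rather than merely unfinished: the ``correction term'' that you assert ``should vanish'' under admissibility is precisely the module whose triviality is equivalent to $\log_{\omega_E}(P)/p$ being a unit, so proving it vanishes whenever $\lambda_p(E/K_\infty)=0$ \emph{is} the content of the converse, not a technical verification. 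To be fair, the paper supplies nothing more for this direction than the one-sided inequality either; a complete argument would need the exact relation $\lambda_{p,\{\emptyset,0\}}(E)=\lambda_p(E/K_\infty)+2\lambda(C')$ for the relevant local module $C'$ together with a proof that $\lambda(C')=0$ under the stated hypotheses --- or else only the ``if'' implication should be claimed.
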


\begin{proof}
Since $(E,K,p)$ is admissible, we know that $a_p(E) \not\equiv \pm 1 \pmod p$, and so by Proposition \ref{prop:indefinite-main-conjecture}, $f_{\{\emptyset,0\}}(0)$ is a $p$-adic unit if and only if the same is true of $\log_{\omega_E}(P)/p$.
But since $f_{\{\emptyset,0\}}(T)$ is a distinguished polynomial, this is equivalent to $f_{\{\emptyset,0\}}(T)=1$, which is in turn equivalent to the simultaneous vanishing of $\lambda_{p,\{\emptyset,0\}}(E)$ and $\mu_{p,\{\emptyset,0\}}(E)$.
The result now follows from Lemma \ref{lem:lambda-one-trickl}.
\end{proof}

The question of when $\left(\log_{\omega_E}(P)/p\right)$ is a $p$-adic unit seems to be a subtle one.
When $E[p]$ is reducible, this quantity was shown to be a unit under many circumstances in \cite{KrizLi}, but in the irreducible case much less appears to be known.
However, this quantity is sometimes amenable to direct computation (as in \cite[$\S$6]{KrizLi}).
In Table \ref{table:heegner}, we record some data in this direction, which suggests that this term is indeed a $p$-adic unit more often than not.
As pointed out to us by H.~Darmon, one might expect that $\left(\log_{\omega_E}(P)/p\right)$ is \emph{not} a unit $1/p$ times.
The Sage code used to do these computations is available upon request.

\subsection{Statistics}
\label{stats for indefinite}
In the remainder of this paper, we apply Theorem \ref{thm:indefinite-main} to study questions in arithmetic statistics.
In particular, we study how often the assumptions of the theorem are satisfied, and thus how often the Iwasawa invariants $\mu_p(E/K_\infty)$ and $\lambda_p(E/K_\infty)$ are $0$ in the indefinite setting.
Recall that the triple $(E,K,p)$ is called \emph{admissible} if the following are satisfied:
\begin{enumerate}
\item $p=\p \bar{\p}$ splits in $K$
\item Every prime divisor of $N_E$ splits in $K$
\item $E$ has good ordinary reduction at $p$
\item $p \nmid 6N\varphi(N)h_K$
\item $a_p(E) \not\equiv \pm 1 \pmod p$
\item $\bar{\rho}_{E,p}$ is surjective
\item $\bar{\rho}_{E,p}$ is ramified at $\ell$ for each $\ell \mid N_E$.
\end{enumerate}

\subsubsection{Fix $E$ and $K$, vary $p$.}
In this section, we fix an elliptic curve $E_{/\Q}$ and an imaginary quadratic field $K=\Q(\sqrt{-d})$ and vary the prime $p$.
Recall that it is assumed that the unit group of $K$ is $\{\pm 1\}$.
We make the following additional assumptions on the pair $(E,K)$.
\begin{enumerate}
 \item $E$ is not CM and is semi-stable,
 \item $\rank_{\mathbb{Z}} E(K)=1$,
 \item every prime $\ell$ dividing the conductor $N_E$ of $E$ splits in $K$.
\end{enumerate}

\begin{theorem}
Let $(E,K)$ be as above, then for $1/2$ of the primes $p$ at which $E$ has good 
reduction, the following are equivalent:
\begin{enumerate}
 \item $\mu_p(E/K_\infty)=0\text{ and } \lambda_p(E/K_\infty)=0$,
 \item $\log_{\omega_E}(P)/p$ is a $p$-adic unit, where $P$ is a generator of $E(K)$ prescribed by the Gross--Zagier formula.
\end{enumerate}
\end{theorem}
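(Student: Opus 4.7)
The plan is to reduce the theorem to Theorem \ref{thm:indefinite-main} by showing that for exactly half of the primes $p$ of good ordinary reduction, the triple $(E,K,p)$ is admissible in the sense of Definition \ref{def:admiss}. Once admissibility holds, Theorem \ref{thm:indefinite-main} immediately yields the desired equivalence (the hypotheses $\rank_{\Z}E(K) = 1$ and finiteness of $\Sh(E/K)[p^\infty]$ are provided; the latter should be either assumed or deduced from the running hypotheses). So the task reduces to computing the density of admissible primes.

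First I would check each of the seven conditions in Definition \ref{def:admiss} and show that all but the first hold on a set of primes of density $1$ within the primes of good ordinary reduction. Condition (2) is built into the standing hypotheses on $(E,K)$. Condition (3) restricts to the set of good ordinary primes, which has density $1$ since $E$ is non-CM. Condition (4) excludes only the finitely many primes dividing $6N\varphi(N)h_K$. Condition (6) holds for all but finitely many $p$ by Serre's open image theorem, since $E$ is non-CM. For condition (7), because $E$ is assumed semistable, for each $\ell \mid N$ the curve $E_{/\Q_\ell}$ has multiplicative reduction, and Tate's uniformization shows that $\bar{\rho}_{E,p}$ is ramified at $\ell$ whenever $p \nmid \ord_\ell(\Delta_E)$; since $\ord_\ell(\Delta_E)$ is a fixed nonzero integer, this excludes only finitely many $p$ for each $\ell$, hence for all of $N$. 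For condition (5), by the Hasse bound $|a_p(E)| < 2\sqrt{p}$, so for $p \geq 5$ the congruence $a_p \equiv \pm 1 \pmod p$ forces $a_p \in \{\pm 1\}$; by Murty's bound (already invoked in Section \ref{section: fixed E varying p}), the number of such primes up to $x$ is $O(x(\log\log x)^2/(\log x)^2)$, hence density zero.

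Next I would handle condition (1), which asserts that $p$ splits in $K$. By Chebotarev's density theorem applied to the quadratic extension $K/\Q$, the set of such primes has density exactly $1/2$. The conditions above are independent of splitting behavior in the sense that they remove only a density-zero set, so intersecting with the density-$1/2$ split condition preserves the proportion: the density of admissible $p$ among all primes is $1/2$, and a fortiori the same density within the primes of good ordinary reduction.

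The last step is to apply Theorem \ref{thm:indefinite-main}: for each admissible prime $p$, and taking $P \in E(K)$ to be a generator (which, up to torsion and sign, agrees with the point supplied by the Gross--Zagier formula since $\rank_{\Z}E(K) = 1$), the equivalence $\mu_p(E/K_\infty) = 0 = \lambda_p(E/K_\infty) \iff \log_{\omega_E}(P)/p \in \Z_p^\times$ follows. The main obstacle is really the bookkeeping for condition (5), namely verifying unconditionally (without Lang--Trotter) that the excluded set is density zero; this is exactly where Murty's bound from the earlier part of the paper is essential. A secondary subtle point is condition (7), which rests on the semistability hypothesis so that the Tate curve description governs ramification of $\bar{\rho}_{E,p}$ at primes of multiplicative reduction.
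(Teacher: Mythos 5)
Your proposal is correct and follows essentially the same route as the paper: reduce to Theorem \ref{thm:indefinite-main} and verify that each admissibility condition except the splitting of $p$ in $K$ holds on a density-one (or cofinite) set of primes, with the splitting condition contributing the factor $1/2$ via Chebotarev. Your treatments of conditions (5) (Hasse bound plus Murty's unconditional bound) and (7) (Tate uniformization at primes of multiplicative reduction) match the paper's arguments, the latter stated somewhat more precisely than in the paper itself.
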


\begin{proof}
Since $\rank_{\mathbb{Z}}E(K)=1$, Theorem \ref{thm:indefinite-main} asserts that the above statements are equivalent when $(E,K,p)$ is admissible.
We show that the triple $(E,K,p)$ is admissible for $1/2$ of the primes $p$.
\begin{enumerate}
 \item It is a consequence of the Chebotarev density theorem that $1/2$ of primes $p$ split in $K$.
 \item We assume that every prime divisor of $N_E$ splits in $K$.
 This statement is independent of $p$.
 \item By the work of J.~P.~Serre (see \cite{Ser81}) we know that $E$ has good ordinary reduction at $p$ for $100\%$ of the primes.
 This can be made more precise.
 Note that $E$ has good ordinary reduction at $p$ precisely when $a_p:=p+1-\#\widetilde{E}(\F_p)$ is not divisible by $p$.
 When $p\geq 7$, this is the case precisely when $a_p\neq 0$.
 Given any number $a\in \Z$, the set of primes $p$ such that $a_p=a$ makes up $0\%$ of the primes.
 In fact, letting $\pi_a(x)$ be the number of primes $p\leq x$ such that $a_p=a$, it is known unconditionally that $\pi_a(x)\leq \frac{(\log\log x)^2}{(\log x)^2}$ as $x\rightarrow \infty$ (see \cite{Mur97}).
 \item Clearly, $p \nmid 6N\varphi(N)h_K$ is satisfied for all but finitely many primes $p$.
 \item It follows from the Hasse bound that for $p$ large enough, $a_p\equiv \pm 1\pmod{p}$ if and only if $a_p=\pm 1$.
 For any integer $a$, since $\pi_a(x)$ is $o(x)$, it follows that $a_p\not\equiv \pm 1\pmod{p}$ for $100\%$ of the primes $p$.
 \item It follows from Serre's open image theorem (see \cite[\S4 Theorem 3]{Serre72}) that $\bar{\rho}_{E,p}$ is surjective for all but finitely many primes $p$.
 In fact, Mazur has shown that for a semi-stable elliptic curve without CM, $\bar{\rho}_{E,p}$ is surjective for $p\geq 13$.
 \item Let $\ell|N_E$, since $E$ has semi-stable reduction at $\ell$, the set of primes $p\neq \ell$ for which $\bar{\rho}_{E,p}$ is unramified at $\ell$ is finite.
 Once $p$ is big enough so that the $j$-invariant $j(E)$ is a $p$-adic unit, by the theory of the Tate curve, $\bar{\rho}_{E,p}$ will be ramified at $\ell$.
 Hence, $\bar{\rho}_{E,p}$ will be ramified at all primes $\ell|N_E$ when $p$ is big enough.
 This argument was pointed out to us by R.~Ramakrishna.
\end{enumerate}
Putting it all together, it follows that $(E,K,p)$ is admissible for $1/2$ of primes $p$, and hence Theorem \ref{thm:indefinite-main} applies to $1/2$ of the primes $p$.
\end{proof}

\begin{remark}
The density of integral
Weierstrass equations which are minimal models of semi-stable elliptic curves over $\Q$ (that is, elliptic curves with square-free conductor) is $1/\zeta(2) \approx 60.79\%$ and the density of semi-stable elliptic curves over $\Q$ is $\zeta(10)/\zeta(2) \approx 60.85\%$, see \cite{cremona2020local}.
\end{remark}

\subsubsection{Fix $E$ and $p$, vary $K$}
We fix a prime $p\geq 5$ and a \emph{semi-stable} elliptic curve $E_{/\Q}$.
For simplicity, assume that $E$ has prime conductor $q(\neq p)$ and satisfies the following properties:
\begin{enumerate}
\item $E$ has good ordinary reduction at $p$,
\item $a_p(E)\not\equiv \pm 1\pmod{p}$,
\item $\varphi(q)= q-1\not \equiv 0 \pmod{p}$,
\item $\bar{\rho}_{E,p}$ is surjective,
\item $\bar{\rho}_{E,p}$ is ramified at $\ell$ for each $\ell \mid N_E$.
\end{enumerate}
Let $d$ be a positive square-free integer such that the unit group of $K^d=\Q(\sqrt{-d})$ is $\{\pm 1\}$.
To understand how often the pair $(E,K,p)$ is admissible, we are left to study the following two questions
\begin{enumerate}
 \item\label{condition 1} For what proportion of $K=\Q(\sqrt{-d})$ does $p\nmid h_K$.
 \item\label{condition 2} For what proportion of $K$ do both the primes $p$ and $q$ split in $K$.
\end{enumerate}
Recalling that the discriminant of a quadratic field uniquely identifies it.
More precisely, the number of imaginary quadratic fields up to discriminant $x$ is $\lfloor x\rfloor $, i.e., the greatest integer $\leq x$.

The first question is related to the Cohen--Lenstra heuristics.
More precisely, for a square-free positive integer $d$, define
\[
N_{p}(x) = \# \{d: p|h_{K} \textrm{ and } \abs{D_{K}}\leq x \}.
\]
Then the Cohen--Lenstra heuristics predict that (see \cite[Section 5]{BM20})
\[
N_p(x)\sim \frac{6}{\pi^2}\left(1 - \prod_{j\geq 1} \left( 1 - \frac{1}{p^j}\right)\right)x.
\]
We now answer the second question.

\begin{lemma}
Given distinct primes $p$ and $q$, the number of \emph{imaginary quadratic fields} (with $\abs{D_K}<x$) in which both $p$ and $q$ split completely is given by
\[
\frac{pq}{8(1+p)(1+q)\zeta(2)}x + O(pq x^{1/2}).
\]
\end{lemma}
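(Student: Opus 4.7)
The plan is to identify each imaginary quadratic field with its defining square-free integer, convert the splitting hypotheses into congruence conditions, and then sum a classical asymptotic for square-free integers in arithmetic progressions over the valid residue classes.

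First I would parametrize: every imaginary quadratic field $K$ corresponds uniquely to a square-free positive integer $d$ with $K = \Q(\sqrt{-d})$, and $|D_K| = d$ if $d \equiv 3 \pmod 4$ while $|D_K| = 4d$ if $d \equiv 1, 2 \pmod 4$. Next, I would translate the splitting conditions. For $p$ an odd prime, $p$ splits in $\Q(\sqrt{-d})$ if and only if $p \nmid d$ and $-d$ is a nonzero quadratic residue modulo $p$, which isolates $(p-1)/2$ residue classes of $d$ modulo $p$; analogously for $q$ odd. (The case $q = 2$ is handled separately via the condition $d \equiv 7 \pmod 8$, which yields the same final constant after an analogous computation.) Combining these with the mod-$4$ stratification and invoking the Chinese Remainder Theorem, I would enumerate exactly $3(p-1)(q-1)/4$ residue classes $c$ modulo $4pq$ (with $(c,pq)=1$ and $4 \nmid c$) that give admissible $d$'s, partitioned equally into the three mod-$4$ types.

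Then I would invoke the standard sieve asymptotic: for any residue class $c$ modulo $m = 4pq$ with $4 \nmid c$ and $\gcd(c,pq)=1$,
\[
\#\{d \leq y : d \equiv c \pmod{4pq},\ d\ \text{square-free}\} = \frac{y\, pq}{3\,\zeta(2)\,(p^2-1)(q^2-1)} + O(\sqrt{y}),
\]
which is obtained from the inclusion-exclusion $\mu^2(d)=\sum_{a^2 \mid d}\mu(a)$ exactly as in the standard proof that $\#\{d \leq y\ \text{square-free}\} = y/\zeta(2) + O(\sqrt{y})$ (in the case $c \equiv 2 \pmod 4$ one writes $d = 2d'$ and reduces to the same sieve modulo $2pq$). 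The upper bound on $d$ is $y = x/4$ when $c \equiv 1, 2 \pmod 4$ and $y = x$ when $c \equiv 3 \pmod 4$, reflecting the discriminant formula.

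Summing the contributions, each of the three mod-$4$ strata provides $(p-1)(q-1)/4$ residue classes, so the main terms combine as
\[
\frac{(p-1)(q-1)}{4}\cdot \frac{pq}{3\,\zeta(2)(p^2-1)(q^2-1)} \cdot x \cdot \left(\tfrac14 + \tfrac14 + 1\right) = \frac{pq}{8(1+p)(1+q)\zeta(2)}\, x,
\]
while the error terms contribute $O\!\bigl(\tfrac{3(p-1)(q-1)}{4}\sqrt{x}\bigr) = O(pq\sqrt{x})$, as claimed. The only real bookkeeping hurdle is tracking the different mod-$4$ cases and verifying that the per-class asymptotic has the same leading constant in all three; the finitely many exceptional fields (e.g.\ $K = \Q(i)$, $\Q(\sqrt{-3})$, or discriminants involving the primes $p$, $q$) are absorbed into the error. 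No deeper analytic input beyond the square-free sieve is required.
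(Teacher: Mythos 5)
Your argument is correct, and I checked the arithmetic: with $(p-1)(q-1)/4$ admissible classes modulo $4pq$ in each of the three mod-$4$ strata, the per-class density $\frac{pq}{3\zeta(2)(p^2-1)(q^2-1)}$, and the weights $x$, $x/4$, $x/4$ coming from $|D_K|=d$ versus $|D_K|=4d$, the main terms do sum to $\frac{pq}{8(1+p)(1+q)\zeta(2)}x$, and $O(pq)$ classes each carrying an $O(\sqrt{y})$ sieve error give the stated $O(pq\,x^{1/2})$. The route is genuinely different from the paper's, which offers no argument at all and simply cites Lemma 2.2 and Proposition 8.1 of \cite{EPW17}; what you have written is essentially a self-contained reconstruction of that input via the classical square-free sieve in arithmetic progressions. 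The benefit of your version is transparency — one sees exactly where each factor of the constant comes from (the $1/\zeta(2)$ from square-freeness, the Euler factors at $2$, $p$, $q$, the quadratic-residue count, and the discriminant normalization) and that nothing beyond elementary sieving is needed; the cost is the mod-$4$ and $q=2$ bookkeeping, which you acknowledge but do not fully write out (the $d\equiv 2\pmod 4$ reduction and the $d\equiv 7\pmod 8$ case should be displayed explicitly in a final write-up, since those are precisely the places where the uniform per-class constant could silently fail). The citation-based proof buys brevity and a reference whose error term has been checked uniformly in $p$ and $q$, which matters here because the implied constant must not depend on the primes.
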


\begin{proof}
See \cite[Lemma 2.2 and Proposition 8.1]{EPW17}.
\end{proof}

It would seem as though the conditions \eqref{condition 1} and \eqref{condition 2} are independent of each other, however, we are not able to prove such a result.
If the conditions are independent, then the proportion of imaginary quadratic fields for which both conditions are satisfied is
\[\frac{6}{\pi^2}\left(1 - \prod_{j\geq 1} \left( 1 - \frac{1}{p^j}\right)\right)\cdot \left(\frac{pq}{8(1+p)(1+q)\zeta(2)}\right).\]
We are thus not able to give a definite answer to the question raised in this section.

\subsubsection{Fix $p$ and $K$, vary $E$}
We now consider the case when both $p$ and $K$ are fixed and $E$ is allowed to vary.
In this case, we are not able to provide any answers, as we shall explain.
Here, we fix the prime $p\geq 5$ and the imaginary quadratic field $K$ such that $p\nmid h_K$ and $p$ splits in $K$.
As we vary over all $E_{/\Q}$ (base changed to $K$) we need to count how often
\begin{enumerate}
 \item $E$ has good ordinary reduction at $p$.
 \item the primes dividing the conductor (denoted by $N_E$) split in $K$.
 \item $p\nmid \varphi(N)$.
 \item $a_p(E)\not\equiv \pm 1\pmod{p}$.
\end{enumerate}
It is the second condition that makes the set of elliptic curves sparse, and it has not been possible for us to combine this condition with the rest of the conditions.

\newpage
\section{Tables}
\label{section: tables}
\begin{center}
\begin{table}[h]
\caption{\small{Frequency of prime divisors of $\Sh(E/K^d)$ for varying prime $d$.} }
\label{tab:sha}
\begin{tabular}{|c|c|c|c|c|c|}
\hline
\textbf{} &
  \textbf{\# disc.} &
  \textbf{2} &
  \textbf{3} &
  \textbf{5} &
  \textbf{$\geq$ 7} \\ \hline
{\href{https://www.lmfdb.org/EllipticCurve/Q/11a1/}{\tt 11a1}} & 20 & 4 & 3 & 0 & 0 \\ \hline
\href{https://www.lmfdb.org/EllipticCurve/Q/11a2/}{\tt 11a2} &
  20 &
  4 &
  3 &
  0 &
  0 \\ \hline
\href{https://www.lmfdb.org/EllipticCurve/Q/11a3/}{\tt 11a3} &
  20 &
  4 &
  3 &
  0 &
  0 \\ \hline
\href{https://www.lmfdb.org/EllipticCurve/Q/11a4/}{\tt 11a4} &
  12 &
  2 &
  0 &
  0 &
  0 \\ \hline
\href{https://www.lmfdb.org/EllipticCurve/Q/14a1/}{\tt 14a1} &
  12 &
  2 &
  0 &
  0 &
  0 \\ \hline
\href{https://www.lmfdb.org/EllipticCurve/Q/14a2/}{\tt 14a2} &
  12 &
  7 &
  0 &
  0 &
  0 \\ \hline
\href{https://www.lmfdb.org/EllipticCurve/Q/14a3/}{\tt 14a3} &
  12 &
  2 &
  0 &
  0 &
  0 \\ \hline
\href{https://www.lmfdb.org/EllipticCurve/Q/14a4/}{\tt 14a4} &
  12 &
  2 &
  6 &
  0 &
  0 \\ \hline
\href{https://www.lmfdb.org/EllipticCurve/Q/14a5/}{\tt 14a5} &
  12 &
  7 &
  0 &
  0 &
  0 \\ \hline
\href{https://www.lmfdb.org/EllipticCurve/Q/14a6/}{\tt 14a6} &
  12 &
  7 &
  6 &
  0 &
  0 \\ \hline
\href{https://www.lmfdb.org/EllipticCurve/Q/15a1/}{\tt 15a1} &
  15 &
  7 &
  0 &
  0 &
  0 \\ \hline
\href{https://www.lmfdb.org/EllipticCurve/Q/15a2/}{\tt 15a2} &
  15 &
  7 &
  0 &
  0 &
  0 \\ \hline
\href{https://www.lmfdb.org/EllipticCurve/Q/15a3/}{\tt 15a3} &
  15 &
  11 &
  0 &
  0 &
  0 \\ \hline
\href{https://www.lmfdb.org/EllipticCurve/Q/15a4/}{\tt 15a4} &
  15 &
  7 &
  0 &
  0 &
  0 \\ \hline
\href{https://www.lmfdb.org/EllipticCurve/Q/15a5/}{\tt 15a5} &
  15 &
  11 &
  0 &
  0 &
  0 \\ \hline
\href{https://www.lmfdb.org/EllipticCurve/Q/15a6/}{\tt 15a6} &
  15 &
  7 &
  0 &
  0 &
  0 \\ \hline
\href{https://www.lmfdb.org/EllipticCurve/Q/15a7/}{\tt 15a7} &
  15 &
  7 &
  0 &
  0 &
  0 \\ \hline
\href{https://www.lmfdb.org/EllipticCurve/Q/15a8/}{\tt 15a8} &
  15 &
  11 &
  0 &
  0 &
  0 \\ \hline
  \end{tabular}
  \hspace{3ex}
  \begin{tabular}{|c|c|c|c|c|c|}
\hline
\textbf{} &
  \textbf{\# disc.} &
  \textbf{2} &
  \textbf{3} &
  \textbf{5} &
  \textbf{$\geq$ 7} \\ \hline
\href{https://www.lmfdb.org/EllipticCurve/Q/17a1/}{\tt 17a1} &
  21 &
  12 &
  1 &
  0 &
  0 \\ \hline
\href{https://www.lmfdb.org/EllipticCurve/Q/17a2/}{\tt 17a2} &
  21 &
  12 &
  1 &
  0 &
  0 \\ \hline
\href{https://www.lmfdb.org/EllipticCurve/Q/17a3/}{\tt 17a3} &
  21 &
  12 &
  1 &
  0 &
  0 \\ \hline
\href{https://www.lmfdb.org/EllipticCurve/Q/17a4/}{\tt 17a4} &
  21 &
  12 &
  1 &
  0 &
  0 \\ \hline
\href{https://www.lmfdb.org/EllipticCurve/Q/19a1/}{\tt 19a1} &
  23 &
  6 &
  2 &
  0 &
  0 \\ \hline
\href{https://www.lmfdb.org/EllipticCurve/Q/19a2/}{\tt 19a2} &
  23 &
  6 &
  2 &
  2 &
  0 \\ \hline
\href{https://www.lmfdb.org/EllipticCurve/Q/19a3/}{\tt 19a3} &
  23 &
  6 &
  2 &
  2 &
  0 \\ \hline
\href{https://www.lmfdb.org/EllipticCurve/Q/21a1/}{\tt 21a1} &
  19 &
  11 &
  1 &
  0 &
  0 \\ \hline
\href{https://www.lmfdb.org/EllipticCurve/Q/21a2/}{\tt 21a2} &
  19 &
  11 &
  1 &
  0 &
  0 \\ \hline
\href{https://www.lmfdb.org/EllipticCurve/Q/21a3/}{\tt 21a3} &
  19 &
  15 &
  1 &
  0 &
  0 \\ \hline
\href{https://www.lmfdb.org/EllipticCurve/Q/21a2/}{\tt 21a4} &
  19 &
  11 &
  1 &
  0 &
  0 \\ \hline
\href{https://www.lmfdb.org/EllipticCurve/Q/21a5/}{\tt 21a5} &
  19 &
  15 &
  1 &
  0 &
  0 \\ \hline
\href{https://www.lmfdb.org/EllipticCurve/Q/21a6/}{\tt 21a6} &
  19 &
  11 &
  1 &
  0 &
  0 \\ \hline
\href{https://www.lmfdb.org/EllipticCurve/Q/26a1/}{\tt 26a1} &
  12 &
  1 &
  0 &
  0 &
  0 \\ \hline
  \href{https://www.lmfdb.org/EllipticCurve/Q/26a2/}{\tt 26a2} &
  12 &
  1 &
  0 &
  0 &
  0 \\ \hline
  \href{https://www.lmfdb.org/EllipticCurve/Q/26a3/}{\tt 26a3} &
  12 &
  1 &
  6 &
  0 &
  0 \\ \hline
    \href{https://www.lmfdb.org/EllipticCurve/Q/26b1/}{\tt 26b1} &
  23 &
  9 &
  2 &
  2 &
  0 \\ \hline
    \href{https://www.lmfdb.org/EllipticCurve/Q/26b2/}{\tt 26b2} &
  23 &
  9 &
  2 &
  2 &
  0 \\ \hline
  \end{tabular}
\end{table}
\end{center}

\begin{center}
\begin{table}[h]
\caption{Data for $\mathfrak{S}_p$ for primes $7\leq p<150$.}
\label{tab:2}
\begin{tabular}{ |c|c|c|c| }
\hline
$p$ & $\#\mathfrak{S}_p/p^2$ & $p$ & $\#\mathfrak{S}_p/p^2$ \\
\hline
 7 & 0.0816326530612245 & 71 & 0.0208292005554453 \\
11 & 0.0413223140495868 & 73 & 0.0270219553387127 \\
13 & 0.0710059171597633 & 79 & 0.0374939913475405\\
17 & 0.0276816608996540 & 83 & 0.0178545507330527 \\
19 & 0.0581717451523546 & 89 & 0.0222194167403106 \\
23 & 0.0415879017013233 & 97 & 0.0255074928260176\\
29 & 0.0332936979785969 & 101 & 0.00980296049406921 \\
31 & 0.0312174817898023 & 103 & 0.0288434348194929 \\
37 & 0.0306793279766253 & 107 & 0.00925845051969604 \\
41 & 0.0118976799524093 & 109 & 0.0181802878545577 \\
43 & 0.0567874526771228 & 113 & 0.0263137285613595 \\
47 & 0.0208239022181983 & 127 & 0.0169260338520677 \\
53 & 0.0277678889284443 & 131 & 0.0189382903094225 \\
59 & 0.0166618787704683 & 137 & 0.0108689860940913 \\
61 & 0.0349368449341575 & 139 & 0.0142849748977796 \\
67 & 0.0147026063711294 & 149 & 0.0133327327597856 \\
 \hline
\end{tabular}
\end{table}
\end{center}

\begin{table}[!ht]
\caption{Some triples $(E,d,p)$ for which $(E,K,p)$ is admissible where $K=\mathbf{Q}(\sqrt{d})$ and $\left(\log_{\omega_E}(P)/p\right)$ is verifiably a $p$-adic unit. $E$ is identified by its Cremona label. To reduce clutter, a blank square is identical to the one above it.}
\label{table:heegner}
  \begin{tabular}{cccc}

  \begin{tabular}[t]{|c|c|c|}
\hline
$E$ & $d$ & $p$ \\ \hline \hline

{\href{https://www.lmfdb.org/EllipticCurve/Q/11a1/}{\tt 11a1}}& -7 & 17  \\ \hline
&     & 19 \\ \hline

&     -19 & 13 \\ \hline
& & 17 \\ \hline
&-35 & 13	\\ \hline
     & & 17	\\ \hline
     & & 19	\\ \hline

     &-39 & 17	\\ \hline
     & & 19	\\ \hline

     &-43 & 13	\\ \hline
     & & 17	\\ \hline

     &-51 & 13	\\ \hline
     & & 19	\\ \hline \hline

 {\href{https://www.lmfdb.org/EllipticCurve/Q/114a1/}{\tt 14a1}}& -31 & 11  \\ \hline
    &     & 13 \\ \hline
    & & 19 \\\hline

    &-47 & 11 \\ \hline
    & & 13 \\ \hline
    & & 17	\\ \hline
         & & 19	\\ \hline
         & -55& 13	\\ \hline

         & & 17	\\ \hline
         & & 19	\\ \hline
       \end{tabular}

       \begin{tabular}[t]{|c|c|c|}
       \hline
       $E$ & $d$ & $p$ \\ \hline \hline
   {\href{https://www.lmfdb.org/EllipticCurve/Q/15a1/}{\tt 15a1}} & -11 & 13 \\ \hline
  &  & 17 \\ \hline
  &  & 19 \\ \hline
  & -59 & 11 \\ \hline
  & & 13\\ \hline
  & & 17\\ \hline
  & & 19\\ \hline \hline

  {\href{https://www.lmfdb.org/EllipticCurve/Q/17a1/}{\tt 17a1}}& -15 & 11  \\ \hline
  &     & 13 \\ \hline
  &     & 19 \\ \hline
  & -19& 5 \\ \hline
  & & 7 \\ \hline
  &     & 11	\\ \hline
     & & 13	\\ \hline
     & -35 & 13	\\ \hline
     && 19	\\ \hline
     &-43 & 7	\\ \hline
     && 11	\\ \hline
     & & 13	\\ \hline
     & & 19	\\ \hline
     & -47 & 7 \\ \hline
     & & 11 \\ \hline
     & & 13 \\ \hline

  \end{tabular}

  \begin{tabular}[t]{|c|c|c|}
  \hline
  $E$ & $d$ & $p$ \\ \hline \hline

     {\href{https://www.lmfdb.org/EllipticCurve/Q/17a1/}{\tt 17a1}}& -47& 19\\ \hline
    & -55& 7	\\ \hline
    & & 13	\\ \hline
    & & 19	\\ \hline
    & -59& 5	\\ \hline
    & & 7	\\ \hline
    & & 11	\\ \hline
    & & 13	\\ \hline
    & & 19	\\ \hline
    & -67& 7	\\ \hline
    & & 11	\\ \hline
    & & 13	\\ \hline
    & & 19	\\ \hline \hline

    {\href{https://www.lmfdb.org/EllipticCurve/Q/19a1/}{\tt 19a1}} & -15 & 11 \\ \hline
    & & 17	\\ \hline
    & -31 & 5	\\ \hline
    & & 7	\\ \hline
    & & 13	\\ \hline
    & & 17	\\ \hline
    & -51& 5	\\ \hline
    & & 7	\\ \hline
    & & 11	\\ \hline
    & & 13	\\ \hline

  \end{tabular}

  \begin{tabular}[t]{|c|c|c|}
  \hline
    $E$ & $d$ & $p$ \\ \hline \hline

  {\href{https://www.lmfdb.org/EllipticCurve/Q/19a1/}{\tt 19a1}}  &-59 & 5	\\ \hline
    & & 7	\\ \hline
    & & 11	\\ \hline
    & & 13	\\ \hline
    & & 17	\\ \hline
    & -67& 7	\\ \hline
    & & 11	\\ \hline
    & & 13	\\ \hline
    & & 17	\\ \hline \hline
    {\href{https://www.lmfdb.org/EllipticCurve/Q/21a1/}{\tt 21a1}} & -47 & 11 \\ \hline
        & & 13	\\ \hline
        & & 17	\\ \hline
      & -59& 5	\\ \hline
          & & 13	\\ \hline
          & & 17	\\ \hline
          & & 19	\\ \hline \hline

         {\href{https://www.lmfdb.org/EllipticCurve/Q/26a1/}{\tt 26a1}} & -23 & 7 \\ \hline
          & & 11	\\ \hline
          & & 17	\\ \hline
          & & 19	\\ \hline
          & -55& 17	\\ \hline
          & & 19	\\ \hline
  \end{tabular}

  \end{tabular}
\end{table}

\bibliographystyle{abbrv}
\bibliography{references}
\end{document}